\let\oldmarginpar\marginpar %changes the font of margin text
\renewcommand\marginpar[1]{\-\oldmarginpar{\raggedright\small\sf #1}}
\newcommand{\nc}{\newcommand}
\nc{\rnc}{\renewcommand}
\nc{\bs}{\backslash}
\nc{\te}{\otimes}
\nc{\lf}{\lfloor} %for round down
\nc{\rf}{\rfloor}
\nc{\lc}{\lceil}  %for round up
\nc{\rc}{\rceil}
\nc{\lr}{\longrightarrow}
\nc{\sr}{\stackrel}
\nc{\dar}{\dashrightarrow}
\nc{\thra}{\twoheadrightarrow}
\nc{\la}{\langle}
\nc{\ra}{\rangle} 
\nc{\ms}{\mathscr}
\nc{\mc}{\mathcal}
\nc{\mb}{\mathbb}
\nc{\mf}{\mathbf}
\nc{\mr}{\mathrm}
\nc{\mg}{\mathfrak}
\nc{\bP}{\mathbb{P}}
\rnc{\P}{\mathbb{P}}
\nc{\Q}{\mathbb{Q}}
\nc{\Z}{\mathbb{Z}}
\nc{\C}{\mathbb{C}}
\nc{\R}{\mathbb{R}}
\nc{\A}{\mathbb{A}}
\nc{\V}{\mathbb{V}}
\nc{\W}{\mathbb{W}}
\nc{\N}{\mathbb{N}}
\nc{\D}{\mathbb{D}}
\nc{\G}{\mathbb{G}}
\nc{\F}{\mathbb{F}}
\nc{\qb}{\overline{\mathbb{Q}}}
\nc{\del}{\partial}
\nc{\wt}{\widetilde}
\nc{\wh}{\widehat}
\nc{\ov}{\overline}
\nc{\un}{\underline}
\nc{\naive}{\!\sim_n}
\nc{\Spec}{\mr{Spec}}
\nc{\omx}{\omega_X}
\nc{\ep}{\epsilon}
\nc{\ve}{\varepsilon}
\nc{\vt}{\vartheta}
\nc{\rhobar}{\overline{\rho}}
\rnc{\l}{\lambda}
\rnc{\k}{\kappa}
\nc{\ovl}{\ov{\lambda}}
\nc{\vl}{\mb{V}_{\ovl}}
\nc{\dl}{\mb{D}_{\ovl}}
\nc{\mnb}{\ov{\mr{M}}_{0,n}}
\nc{\mn}{\mr{M}_{0,n}}
\nc{\mel}{\ov{\mr{M}}_{1,1}}
\nc{\mfb}{\ov{\mr{M}}_{0,4}}
\nc{\mof}{\mr{M}_{0,4}}
\nc{\mgnb}{\ov{\mr{M}}_{g,n}}
\nc{\mgn}{\ov{\mr{M}}_{g,n}}
\nc{\omc}{\ov{\mr{M}}}
\rnc{\sl}{\shoveleft}
\nc{\res}{\operatorname{Res}}
\nc{\pic}{\operatorname{Pic}}
\nc{\spec}{\operatorname{Spec}}
\nc{\im}{\operatorname{Im}}
\nc{\gal}{\operatorname{Gal}}
\nc{\fr}{\operatorname{Fr}}
\nc{\ed}{\operatorname{ed}}
\nc{\rank}{\operatorname{rank}}
\nc{\h}{\operatorname{H}}
\nc{\ch}{\operatorname{char}}
\nc{\sw}{\operatorname{sw}}
\nc{\rsw}{\operatorname{rsw}}
\nc{\supp}{\operatorname{supp}}
\nc{\id}{\operatorname{id}}
\nc{\Ad}{\operatorname{Ad}}
\nc{\cO}{\mathcal{O}}
\nc{\Mor}{\operatorname{Mor}}
\nc{\Per}{\operatorname{Per}}
\nc{\prep}{\operatorname{Prep}}
\nc{\End}{\operatorname{End}}
\nc{\Orb}{\operatorname{Orb}}
\nc{\Aut}{\operatorname{Aut}}
\nc{\tr}{\operatorname{Tr}}
\nc{\GL}{\operatorname{GL}}
\nc{\SL}{\operatorname{SL}}
\nc{\Frob}{\operatorname{Frob}}
\nc{\Br}{\operatorname{Br}}
\nc{\inv}{\operatorname{inv}}
\nc{\chr}{\operatorname{char}}
\nc{\br}{\bar{\rho}}
\nc{\ideala}{\mathfrak{a}}
\nc{\m}{\mathfrak{m}}
\nc{\primep}{\mathfrak{p}}
\nc{\primeq}{\mathfrak{q}}
\renewcommand{\sl}{\mathfrak{sl}}
\newtheorem{thm}{Theorem}[section]
\newtheorem{prop}[thm]{Proposition}
\newtheorem{conj}[thm]{Conjecture}
\newtheorem{cor}[thm]{Corollary}
\newtheorem{lem}[thm]{Lemma}
\newtheorem{quest}[thm]{Question}
\newtheorem{guess}[thm]{Guess}
\newtheorem{thmalph}{Theorem}
\theoremstyle{definition}
\newtheorem{defn}[thm]{Definition}
\newtheorem{rem}[thm]{Remark}
\newtheorem{define}[thm]{Definition}
\numberwithin{equation}{section}
\title{Liftable  groups, negligible cohomology and Heisenberg representations }
\begin{document}

\author[C. B. ~Khare]{Chandrashekhar  B.  Khare}
\address{UCLA Department of Mathematics, Box 951555, Los Angeles, CA 90095, USA}
\email{shekhar@math.ucla.edu}
\author[Michael Larsen]{Michael Larsen}
\address{Department of Mathematics, Indiana University, Bloomington, IL}
\email{mjlarsen@indiana.edu}

%\doublespacing

\maketitle

\begin{abstract}

We consider lifting of mod $p$ representations to mod $p^2$ representations  in the setting of representations of
(i)  finite groups; (ii) absolute Galois groups of abstract fields; and (iii) absolute Galois groups of local and global fields.

\end{abstract}

%\tableofcontents
\section{Introduction}

For any field $L$, denote  by $L^s$ a separable closure of $L$, and  let $G_L={\rm Gal}(L^s/L)$ denote the absolute Galois group of $L$.
In \cite{K-JNT} it was  proved that for any field $L$,  any continuous homomorphism $\rho:G_L \rightarrow \GL_2(k)$ with $k$ a finite field  lifts to $\GL_2(W_2(k))$. This was originally proved  (cf.  \cite[Theorem 1]{K-JNT}) with the assumption that $L$ is a number field, and Serre pointed out at the time  \cite[Remark 2, pg. 392]{K-JNT}   that  the argument  uses only Kummer theory, and thus  being purely algebraic, actually works for {\it all} fields $L$ (see Proposition \ref{lifting n=2}).

The motivation for \cite{K-JNT} was Serre's modularity conjecture \cite{serre:conjectures} which implies that irreducible odd representations $\br:G_\Q \rightarrow \GL_2(k)$ lift to {\it geometric} representations in characteristic 0 (see \S \ref{app}).  We recall that $\rhobar$ is called odd if $\det(\rhobar(c))=-1$ where $c \in G_\Q$ is a complex conjugation. A continuous representation  $\rho:G_K \to \GL_n(\overline \Q_p)$  for a number field $K$ is said to be {\it geometric}   if it is unramified outside a finite set of primes, and is de Rham at all places of $K$ above $p$.

Lifting methods have been greatly developed over number fields following:

--  a method of Ramakrishna \cite{ramakrishna02} which  uses methods of algebraic number theory (duality theorems for Galois cohomology and the Chebotarev density theorem ),

-- and  another method in \cite{khare-wintenberger:serre0} (which uses the modularity lifting technique of Wiles \cite{wiles:fermat}  and its developments).  

(We discuss this further  in Appendix A, where we give references to later developments of these methods.)
The latter method was instrumental in the proof of Serre's modularity conjecture in  \cite{khare-wintenberger:serre1}. Both these  techniques  have in common that they use local-global methods and are more sophisticated than the  method used to get mod $p^2$ liftings constructed in \cite{K-JNT}. 

The mod $p^2$ lifting result of \cite{K-JNT} for 2-dimensional  mod $p$ representations  raises the question   if lifting Galois representations works in generality for $n$-dimensional  representations  over abstract fields.  The local-global  methods of \cite{ramakrishna02}  do not cut any ice here, and if one  seeks  to generalize the  mod $p^2$ lifting result  of \cite{K-JNT} one  has to  devise  arguments which exploit  results available over all fields like Kummer theory.  We note that lifting mod $p^2$ of $n$-dimensional mod $p$  representations of $G_L$  has been explored in  \cite{bockle:modp2}  in the context of $L$  a local or global field, and more recently in the   preprint \cite{mfcdc} for general  fields $L$.

\subsection{Questions}
This paper considers  mod $p^2$ lifting questions for finite groups, then Galois groups of abstract fields, and  then in the number theoretic case of local and global fields. We  study a  liftability property for groups $G$.  Namely $G$ is  said to be $L$ (resp. $L_p$) or  ($p$-) liftable, if any homomorphism $f:G \rightarrow \GL_n(k)$   with $k$ a finite field (of characteristic $p$) lifts to $\GL_n(W_2(k))$.

\begin{quest}\label{galois}
 Are all absolute Galois groups  of fields  liftable?
\end{quest}

The only result we know towards  this question are lifting results for homomorphisms  $f:G_L \rightarrow \GL_n(k)$ for $n \leq 2$ (Proposition \ref{lifting n=2}).

By a classical result of Artin and Schreier, the only finite groups which are absolute  Galois groups are $G=1,\Z/2\Z$, and these are liftable. In the context of  Question \ref{galois}  above, N. Fakhruddin asked if one can characterize finite groups $G$ that are $L$. 

 \begin{quest}(Fakhruddin)\label{finite groups}
Which finite groups  $G$ are liftable?  
\end{quest}

We tentatively  make the following guesses.

\begin{guess}
\label{local}
 If $G$ is a finite group whose order is divisible by a prime $p>3$, then $G$ is not $L_p$, and hence not $L$.  A $3$-group $G$ is $L_3$ if and only if $G\cong \Z/3\Z$.  A $2$-group $G$ is $L_2$ if and only if it is cyclic.
 \end{guess}

\begin{guess}
\label{strong}
A finite group $G$ is  liftable if and only if it is isomorphic to one of  $\Z/2^n\Z$, $\Z/3\Z\times\Z/2^n\Z$, or $\Z/3\Z\rtimes \Z/2^n\Z$.
\end{guess}

We justify Guess \ref{local} below for odd primes $p$ (see Corollary \ref{classification}).
Our evidence does not justify upgrading these guesses to conjectures, as   the guess characterizing $2$-groups  that are $L_2$ seems  open to doubt. We show that a 2-group is $L_2$ implies that  it is either cyclic or quaternionic.
But we have not so far succeeded in establishing even for $G=Q_8$, the quaternionic group of order 8, whether it is $L_2$ or not.

\subsection{Structure of the paper}

We begin the paper by studying  Question \ref{finite groups} in \S \ref{Finite Groups}. The groups  $1,\Z/2\Z,\Z/3\Z$ are  liftable, and $\Z/p\Z$ is not liftable for $p>3$ (see Proposition \ref{cyclic p-groups}).  One would expect that for a finite group
the property of being liftable  is very restrictive. To our surprise it turns out that $\Z/2^n\Z$ is liftable for all $n \geq 0$ (see Proposition \ref{power of 2}). 
Hence any  finite group $G$ is  liftable whose only Sylow subgroups are cyclic 2-groups and groups of order dividing $3$ (see Proposition \ref {reduction to p groups}).    In particular $S_3$ is liftable.

Guess  \ref{local} implies Guess \ref{strong} (see Proposition \ref{implication}), and we prove 
the guesses when we restrict to  finite abelian groups  (see Proposition \ref{abelian}). Using an observation of Ali Cheraghi   (Lemma \ref{ali} below) we deduce  in Corollary \ref{classification}   our Guess \ref{local} for $p>2$, and shows  in the case $p=2$ that $G$  has to be either cyclic or  a quaternionic group.  In \S \ref{Rigidity} we explore a certain rigidity  property of finite groups $G$ {\it relative} to a homomorphism   $G \to \GL_n(k)$, namely that any lifting of this homomorphism to $\GL_n(A)$ implies that $pA=0$. In  \S \ref{Finite Groups} we   are thus  in the realm of finite group theory.

In \S \ref{Negligible} we address Question \ref{galois} by recalling the results of \cite{K-JNT} on lifting 2-dimensional representations, and thus we are still in the realm of algebra, working with Galois groups of abstract fields.   In the 3-dimensional case, under the further assumption that $\mu_{p^2} \subset L$, we reduce  the problem of lifting representations $\rhobar:G_L \to \GL_3(\F_p)$ to $\GL_3(\Z/p^2\Z)$  (cf. Lemma \ref{red} and  \ref{red1}) to answering the following question:

\begin{quest}\label{quest:cup-product}
 Let $x_1,x_2 \in H^1(G_L,\mu_p)$ be orthogonal elements  under the cup-product map $\cup: H^1(G_L,\mu_p) \times H^1(G_L,\mu_p) \to H^1(G_L,\mu_p^{\otimes 2})$, i.e.,  $x_1 \cup x_2=0$. Do $x_i$ lift to elements $\tilde x_i \in H^1(G_L,\mu_{p^2})$,   such that  under the cup-product map $\cup: H^1(G_L,\mu_{p^2}) \times H^1(G_L,\mu_{p^2}) \to H^1(G_L,\mu_{p^2}^{\otimes 2})$,
 $\tilde x_i \cup \tilde x_2=0$?
\end{quest}

Being unable to answer this question in the generality of abstract  fields $L$, in \S \ref{cupping} we   transition from abstract fields to local and global fields, and address the above  question in this setting. In \S \ref{Heisenberg}  we address  lifting 
mod $p$ Heisenberg representations of Galois groups of such fields (with image the Heisenberg group of order $p^3$ in $\GL_3(\F_p)$)  to a  mod $p^2$  Heisenberg representation (cf. Definition \ref{def:Heisenberg}) with image in the upper triangular  unipotent matrices of $\GL_3(\Z/p^2\Z)$.   The main results  of \S \ref{cupping}   are Proposition \ref{solve local}, Theorem \ref{key}.

  \begin{thmalph}[see  Proposition \ref{solve local} and Theorem \ref{key}]
 Let $L$ be a  local non-archimedean field or global   field,  $p>2$ a prime, and  $\mu_p \subset L$.   Let  $x_1,x_2 \in H^1(L,\mu_p)$ satisfy  $x_1 \cup x_2=0$.
%Suppose that for $t\in H^2(L,\mu_p^{\otimes 2})$,
%the equation
 %
% $$i(x_1\cup z_1 + x_2\cup z_2) = i(t)$$
%
% can be solved for $z_1,z_2\in H^1(L,\mu_p)$
% as long as for all $v$, $(x_1)_v=(x_2)_v=0$ implies $i(t)_v=0$.
Then there exist elements $\tilde x_1,\tilde x_2\in H^1(L,\mu_{p^2})$ mapping  to $x_1$ and $x_2$ respectively such      
that $\tilde x_1\cup \tilde x_2 = 0$.
    \end{thmalph}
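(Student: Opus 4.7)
The plan is to use Kummer theory to reformulate the lifting question in terms of Milnor symbols, dispatch the local case by Tate local duality, and bootstrap to the global case via a local-global patching argument combined with weak approximation. Since $\mu_p\subset L$, Kummer theory gives $H^1(L,\mu_{p^n})=L^*/(L^*)^{p^n}$, so the reduction map is the natural surjection $L^*/(L^*)^{p^2}\twoheadrightarrow L^*/(L^*)^p$ and lifts of $x_1,x_2$ exist unconditionally. Write $x_i=[a_i]$ and a general lift $\tilde x_i=[a_ib_i^p]$. Bilinearity of the Galois symbol in the $p^2$-torsion group $H^2(L,\mu_{p^2}^{\otimes 2})$ yields the master formula
\[
\tilde x_1\cup\tilde x_2 \;=\; \{a_1,a_2\}+p\bigl(\{b_1,a_2\}+\{a_1,b_2\}\bigr)\pmod{p^2},
\]
since $\{b_1^p,b_2^p\}=p^2\{b_1,b_2\}$ vanishes. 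Because $x_1\cup x_2=0$, the class $\{a_1,a_2\}$ lies in the kernel of the reduction map $H^2(L,\mu_{p^2}^{\otimes 2})\to H^2(L,\mu_p^{\otimes 2})$, so $\{a_1,a_2\}=p\cdot\eta$ for some $\eta\in H^2(L,\mu_p^{\otimes 2})$. The problem becomes: choose $b_1,b_2\in L^*$ with $[b_1]\cup x_2+x_1\cup[b_2]\equiv-\eta$ in $H^2(L,\mu_p^{\otimes 2})$, modulo the image of the Bockstein $\delta\colon H^1(L,\mu_p)\to H^2(L,\mu_p^{\otimes 2})$ attached to $0\to\mu_p\to\mu_{p^2}^{\otimes 2}\to\mu_p\to 0$.

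For $L$ local non-archimedean, Tate local duality makes the cup-product pairing $H^1(L,\mu_p)\times H^1(L,\mu_p)\to H^2(L,\mu_p^{\otimes 2})\cong\F_p$ perfect. Hence the ``partial cup-product'' map $(y_1,y_2)\mapsto y_1\cup x_2+x_1\cup y_2$ is surjective whenever $x_1$ or $x_2$ is nonzero, so one solves directly for $b_1,b_2$; the degenerate case $x_1=x_2=0$ is handled by trivial lifts. This yields Proposition~\ref{solve local}.

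For the global case, fix any global lifts $\tilde x_i^{(0)}$, let $\omega_0\in H^2(L,\mu_{p^2}^{\otimes 2})$ be the resulting obstruction, and write $\omega_0=j_*(\eta_0)$ with $\eta_0\in H^2(L,\mu_p^{\otimes 2})$. The local case produces, at each place $v$, adjustments $(y_{1,v},y_{2,v})\in H^1(L_v,\mu_p)^2$ satisfying $y_{1,v}\cup(x_2)_v+(x_1)_v\cup y_{2,v}=-(\eta_0)_v$; these may be taken trivial outside a finite set $S$ containing the archimedean places, the places above $p$, and the support of $\eta_0$. Weak approximation for $L^*/(L^*)^p$ globalizes the $y_{i,v}$ to $y_i\in H^1(L,\mu_p)$ matching the prescribed data on $S$. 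The modified class $\eta:=\eta_0+y_1\cup x_2+x_1\cup y_2\in H^2(L,\mu_p^{\otimes 2})$ then vanishes at every $v\in S$, and by Brauer--Hasse--Noether $H^2(L,\mu_p^{\otimes 2})\cong\Br(L)[p]\hookrightarrow\bigoplus_v H^2(L_v,\mu_p^{\otimes 2})$ (using $\mu_p\subset L$), vanishing everywhere locally forces $\eta=0$, whence $\omega=j_*(\eta)=0$. The main obstacle is the global reciprocity relation --- the sum of local invariants in $H^2(L,\mu_p^{\otimes 2})$ vanishes --- which constrains how freely local data can be prescribed, together with ensuring vanishing at places outside $S$ where the $x_i$ may still be locally nontrivial; both issues should be tractable via a Poitou--Tate diagram chase, possibly supplemented by an auxiliary Chebotarev place to absorb any residual reciprocity defect.
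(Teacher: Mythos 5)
Your reduction to the equation $x_1\cup z_1+x_2\cup z_2\equiv-\eta \pmod{\ker i}$ and your treatment of the local case are sound and essentially coincide with the paper's route (Lemma on surjectivity of $H^1(L,\mu_{p^2})\to H^1(L,\mu_p)$, ``Property D'', and Tate local duality). The genuine gap is in the global patching step. You solve the equation locally at a finite set $S$ and globalize the local classes $y_{i,v}$ by weak approximation, then claim $\eta:=\eta_0+y_1\cup x_2+x_1\cup y_2$ vanishes everywhere locally. But weak approximation only controls $y_1,y_2$ at the places of $S$; outside $S$ the globalized elements will in general have nontrivial valuations, so $y_1\cup x_2+x_1\cup y_2$ acquires nonzero local invariants at new places $v\notin S$ --- and by reciprocity this is unavoidable whenever the invariants you prescribe on $S$ do not already sum to zero in a compatible way. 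Worse, these new places can be ones where $(x_1)_v=(x_2)_v=0$, and at such a place no further correction of the shape $x_1\cup z_1+x_2\cup z_2$ can kill the defect, so the difficulty cannot be pushed off to ``one auxiliary Chebotarev place''. The sentence deferring all of this to a ``Poitou--Tate diagram chase'' is precisely where the content of the theorem lies, and no argument is given. A second, smaller omission: since only $\mu_p\subset L$ is assumed, at local places with $\mu_{p^2}\not\subset L_v$ the map $i_v\colon H^2(L_v,\mu_p^{\otimes 2})\to H^2(L_v,\mu_{p^2}^{\otimes 2})$ is zero, so from $i(\eta_0)_v$ being the local obstruction you cannot read off $(\eta_0)_v$, and conversely local vanishing data for $\eta$ does not translate into $i(\eta)=0$ without a Hasse principle for $\mu_{p^2}^{\otimes 2}$-coefficients; the paper needs a preliminary normalization (Proposition \ref{fix t}) exactly to deal with this.

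For comparison, the paper's proof of Theorem \ref{key} does not patch local solutions at all. After Proposition \ref{fix t} arranges that the support of the global class $t$ (with $i(t)=y_1\cup y_2$) consists only of places where $x_1$ or $x_2$ is locally nontrivial, it runs an induction on the size of that support: using the Chinese remainder theorem together with the Chebotarev density theorem applied to a ray class field, one finds $b\in L^\times$ with divisor $\mathfrak{p}_1^{k_1}\mathfrak{p}_2^{k_2}\mathfrak{q}$ and suitable congruences so that $t+(a_1,b)$ (or $t+(a_1a_2,b)$ in the degenerate $n=2$ case) loses two places of its support at the cost of at most one new place, which is automatically a place where $a_1$ (resp.\ $a_1a_2$) is a non-$p$th power; Hasse's theorem guarantees the induction terminates with $t=x_1\cup\delta b_1+x_2\cup\delta b_2$ exactly, after which one applies $i$. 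Your proposal would need an argument of comparable substance to close the local-to-global step; as written it does not constitute a proof of the global case.
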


We deduce from this the following theorem.  By {\it unitriangular} matrices in $\GL_n(R)$  for a ring $R$ we mean upper triangular matrices with 1's on the diagonal. By a unitriangular representation of a group to $\GL_n(R)$   we mean one whose image is contained in the  unitriangular matrices of $\GL_n(R)$

\begin{thmalph}[See Theorem \ref{final}]\label{mainthmintro}

 Let $L$ be a local non-archimedean field or a global field, $p>2 $ a prime, and assume $\mu_{p^2}  \subset L$.
 
 -- A continuous  representation $\rhobar:G_L \to \GL_3(\F_p)$ with image in the unitriangular matrices  of $\GL_3(\F_p)$  lifts to a  mod $p^2$  representation  $\rho:G_L\to  \GL_3(\Z/p^2\Z)$ which takes values in the unitriangular matrices of $\GL_3(\Z/p^2\Z)$.
  
 -- A continuous representation $\rhobar: G_L \to \GL_3(\F_p)$ lifts to a representation $\rho:G_L \to \GL_3(\Z/p^2\Z)$.

\end{thmalph}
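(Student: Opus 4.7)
Both statements follow from Theorem~A (the cup-product lifting just established) together with Kummer theory and the hypothesis $\mu_{p^2}\subset L$.

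\emph{First bullet.} A continuous unitriangular $\bar\rho$ is specified by two characters $a,b\colon G_L\to\F_p$ and a $1$-cochain $c\colon G_L\to\F_p$ with $dc=a\cup b$; equivalently $a\cup b=0$ in $H^2(G_L,\F_p)$. Fixing a primitive $p^2$-th root of unity in $L$ identifies $\F_p\cong\mu_p$ and $\Z/p^2\Z\cong\mu_{p^2}$ as (trivial) Galois modules, so Kummer theory translates $a,b$ into cup-orthogonal classes $x_1,x_2\in H^1(G_L,\mu_p)$. Theorem~A yields lifts $\tilde x_1,\tilde x_2\in H^1(G_L,\mu_{p^2})$ with $\tilde x_1\cup\tilde x_2=0$, equivalently characters $\tilde a,\tilde b\colon G_L\to\Z/p^2\Z$ lifting $a,b$ with $\tilde a\cup\tilde b=0$ in $H^2(G_L,\Z/p^2\Z)$. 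Choose any continuous $1$-cochain $\tilde c_0$ with $d\tilde c_0=\tilde a\cup\tilde b$; the reduction $\tilde c_0\bmod p$ differs from $c$ by a character $\delta\colon G_L\to\F_p$, and Kummer theory (again using $\mu_{p^2}\subset L$) produces a character $\tilde\delta\colon G_L\to\Z/p^2\Z$ lifting $\delta$. Setting $\tilde c:=\tilde c_0+\tilde\delta$ gives a lift of $c$ with $d\tilde c=\tilde a\cup\tilde b$, so the unitriangular matrix with entries $(\tilde a,\tilde b,\tilde c)$ defines the desired lift $\rho$.

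\emph{Second bullet.} Reduce to the first via the composition series of $\bar\rho$. Characters $G_L\to\F_p^\times$ lift uniquely via the Teichm\"uller embedding $\F_p^\times\hookrightarrow(\Z/p^2\Z)^\times$; two-dimensional irreducible constituents lift by Proposition~\ref{lifting n=2}; and a $3$-dimensional irreducible constituent that fails to be absolutely irreducible is induced from a character $\bar\chi$ of a cubic extension $L'/L$, whose Teichm\"uller lift $\tilde\chi\colon G_{L'}\to W_2(\F_{p^3})^\times$ (unique since $|\F_{p^3}^\times|$ is coprime to $p$) induces, via the identification of $W_2(\F_{p^3})$ with a rank-$3$ free $\Z/p^2\Z$-module, a lift to $\GL_3(\Z/p^2\Z)$. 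Having lifted the semisimplification, extend stepwise through the composition series: each successive extension class lives in an $H^1$ that lifts via Kummer theory, and the compatibility needed to patch together a genuine homomorphism reduces to the cup-product analysis of the unitriangular case.

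The principal obstacle is Theorem~A itself, proved in Section~\ref{cupping}; once available, the unitriangular argument is essentially Kummer-theoretic assembly, and the general case reduces to it by a standard composition-series argument. The most delicate sub-case in the second bullet is an absolutely irreducible $3$-dimensional $\bar\rho$, where one must invoke a direct analysis of the finite absolutely irreducible subgroups of $\GL_3(\F_p)$ (rather constrained for $p>2$) to produce a lift case-by-case.
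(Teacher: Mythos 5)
Your treatment of the first bullet (the unitriangular/Heisenberg case) is correct and is essentially the paper's own argument: since $\mu_{p^2}\subset L$ the coefficients are trivial Galois modules, Theorem A (Proposition \ref{solve local} and Theorem \ref{key}) supplies cup-orthogonal lifts of the two Kummer classes, and the corner entry is then adjusted by a character lifted through the surjection $H^1(L,\mu_{p^2})\to H^1(L,\mu_p)$; this is exactly the content of Lemma \ref{red1} combined with Theorem A, up to harmless sign conventions.

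The second bullet, however, has a genuine gap. Your composition-series plan leaves the absolutely irreducible $3$-dimensional case unproved, and the proposed remedy --- classifying the absolutely irreducible finite subgroups of $\GL_3(\F_p)$ and lifting ``case-by-case'' --- cannot work as stated: if $\rhobar$ surjects onto $\SL_3(\F_p)$ with $p\ge 5$, the subgroup $\SL_3(\F_p)\subset\GL_3(\F_p)$ admits no lift to $\GL_3(\Z/p^2\Z)$ at all (this is the paper's own Corollary \ref{rigid finite Lie groups}), so no lift of $\rhobar$ can be obtained by lifting its image; what must be proved is that the pullback to $G_L$ of the nonzero obstruction class is zero, a Galois-cohomological statement, not a finite-group one. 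Similarly, in the reducible but non-unitriangular cases the extension classes and the cross-term obstructions live in $H^1$ and $H^2$ with coefficients in nontrivial twists such as $\F_p(\bar\chi_i\bar\chi_j^{-1})$ or $\mathrm{Hom}$'s between constituents, where ``lifts via Kummer theory'' and Theorem A (which only concerns $\mu_p$- and $\mu_{p^2}$-coefficients, trivial here) do not apply, so the ``patching'' step is not justified. The paper bypasses all of this with one observation that your proposal is missing: let $P$ be a Sylow $p$-subgroup of $\mathrm{im}(\rhobar)$ and $K$ the fixed field of $\rhobar^{-1}(P)$; then $[K:L]$ is prime to $p$, so restriction $H^2(G_L,M_3(\F_p))\to H^2(G_K,M_3(\F_p))$ is injective on the obstruction class $\alpha$ (restriction--corestriction), while over $K$ the image of $\rhobar$ is a $p$-group, hence conjugate into the unitriangular matrices, so the first bullet gives a lift and forces $\alpha|_{G_K}=0$; therefore $\alpha=0$ and $\rhobar$ lifts (with no claim on the shape of the lift). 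You should replace the composition-series argument by this Sylow reduction.
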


  Theorem \ref{key} answers  Question \ref{quest:cup-product} in the case of $L$ a local or global field. In the 3-dimensional case for lifting to mod $p^2$ representations in Theorem \ref{final}   we take advantage of the fact that the obstruction to lifting from mod $p$ to mod $p^2$ representations  is related to the  cup product lifting property proved in Theorem \ref{key}.  

We observe that mod $p^2$  liftings in the 2-dimensional case are deduced from the surjectivity of  the map $H^1(L,\mu_{p^2}) \to H^1(L,\mu_p)$ that is a generality for all fields (Kummer theory). In the 3-dimensional case we need the surjectivity of the map  $H^1(L,\mu_{p^2}) \to H^1(L,\mu_p)$ together with being able to lift mod $p$ Kummer elements $x_1 , x_2$  to mod $p^2$ Kummer elements  with cup product that is a specified  lift of $x_1 \cup x_2$ to $H^2(L,\mu_{p^2}^{\otimes 2})$ (cf. Question \ref{quest:cup-product}). 

%This points to the fact that lifting unipotent $n$-dimensional  representations of $G_L$ is related to ``iterated Kummer theory'' and we hope to return to the case $n>3$ (for local and global fields $L$)  in a future work.

Our approach is specific to local and global fields as it  draws heavily  on duality theorems and  the Chebotarev density theorem, but we hope it might still give some hints  on how to address lifting problems as in   Question \ref{galois} (over abstract fields)   beyond the 2-dimensional case. 
In an appendix  \S \ref{app} we raise somewhat more specialized  questions of lifting of Galois representations over number fields.

 \vskip .5cm
 
 The paper has its origins in a talk one of us (CBK) gave at the TIFR   International Colloquium on Arithmetic Geometry in January 2020. There we  divided the talk into two parts:  ``Algebra'' and ``Number Theory''. The ``Algebra'' part recalled the results in \S \ref{Negligible} from \cite{K-JNT} and posed Question \ref{galois}. In the ``Number Theory''  part  we stated some results and consequences of the methods of \cite{fkp:reldef} and \cite{fkp:reducible}. The organization of the paper reflects that of  the talk, and  proceeds from algebra (results about finite groups in \S \ref{Finite Groups}, abstract Galois groups in \S \ref{Negligible}) to number theory (results in \S \ref{cupping} and \S  \ref{Heisenberg}).    The results of \S \ref{Finite Groups}, \S \ref{cupping} and \S \ref{Heisenberg} of this paper  are new, and were arrived at in  the period April--June 2020 during the pandemic.

As the first named author  recounted in his lecture at the conference,   he started thinking about questions related to lifting of Galois representations while  he worked at TIFR (for roughly  a decade)   upon   returning to India     after  completing his graduate studies in Los Angeles (at Caltech and UCLA) in 1995.  This was  motivated by Serre's conjecture \cite{serre:conjectures}  which implied existence of  liftings  of odd, irreducible $\rhobar:G_\Q \to \GL_2(k)$. The paper \cite{K-JNT} dates from that period. He  went on to  sheepishly confess to   still thinking about these questions twenty five years thence.   It is our hope that the present paper illustrates   that there are many  interesting questions about lifting Galois representations over abstract fields, and local and global fields,   that remain to be answered.

The first named author thanks  the organizers of the International Colloquium on Arithmetic Geometry at TIFR in January 2020  for  their invitation  to a stimulating and enjoyable  conference (leavened with concerts of  Indian classical  music and dance, keeping with the tradition of the International Colloquiums at TIFR!),  which led  to this paper. 

We would like to thank N. Fakhruddin for pointing out  Question \ref{finite groups}  (during the coffee breaks at the  conference) and for stimulating discussions,  and G. B\"ockle for helpful correspondence. We are especially grateful  to J-P. Serre for  a lively e-mail  correspondence  during the quarantine period in March--May 2020, which has contributed substantially to \S  2 and \S 3 of this paper.   We are grateful to Ali Cheraghi for his observation recorded in Lemma \ref{ali}  and Corollary \ref{classification} which greatly improved our previous results in \S \ref{Finite Groups}.

ML was  partially supported by the NSF grant  DMS-2001349.

\section{Liftable finite groups}\label{Finite Groups}

\subsection{Definition and preliminaries}
    \begin{defn}
 We say that a (topological) group $G$ is \emph{liftable}, or \emph{has property $L$},  or \emph{is $L$},  if  any  (continuous) homomorphism $f: G \rightarrow \GL_n(k)$,  with $n$ a positive integer and $k$ a finite field, lifts to a (continuous)  homomorphism  $g:G \rightarrow \GL_n(W_2(k))$.

We say that a (topological) group $G$ is $p$-liftable, or has property $L_p$, or is $L_p$,   if  any (continuous)  homomorphism $f: G \rightarrow \GL_n(k)$, with $n$ a positive integer and $k$ a finite field of characteristic $p$,  lifts to a (continuous)  homomorphism  $g:G \rightarrow \GL_n(W_2(k))$.
\end{defn}

\begin{lem}\label{reduction to p groups}
A finite group  $G$  is $L_p$ if and only if  a Sylow $p$-subgroup   of $G$  is $L_p$.
\end{lem}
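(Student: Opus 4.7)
The plan is to reduce both directions to the standard obstruction calculus for mod $p^2$ lifts of homomorphisms $\bar\rho : G \to \GL_n(k)$. Recall that the obstruction to lifting to $\GL_n(W_2(k))$ is a cohomology class $o(\bar\rho) \in H^2(G, \mathrm{ad}(\bar\rho))$, where $\mathrm{ad}(\bar\rho) = M_n(k)$ carries the conjugation $G$-action through $\bar\rho$; the representation $\bar\rho$ lifts if and only if $o(\bar\rho) = 0$. A key feature is naturality under restriction: $o(\bar\rho|_H) = \mathrm{Res}_H^G\, o(\bar\rho)$ for $H \leq G$.

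For the easy direction, ``$P$ is $L_p$ implies $G$ is $L_p$'', I would take $\bar\rho: G \to \GL_n(k)$ and use the hypothesis on $P$ to conclude $\mathrm{Res}_P^G\, o(\bar\rho) = o(\bar\rho|_P) = 0$. The identity $\mathrm{cor}\circ\mathrm{res} = [G:P]$, together with $[G:P]$ being prime to $p$ and $\mathrm{ad}(\bar\rho)$ being $p$-torsion, forces $\mathrm{Res}_P^G$ to be injective on $H^2(G, \mathrm{ad}(\bar\rho))$; hence $o(\bar\rho) = 0$ and $\bar\rho$ lifts.

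For the converse, given $\bar\rho : P \to \GL_n(k)$, I would form the induced representation $\sigma = \mathrm{Ind}_P^G \bar\rho$ and apply the hypothesis on $G$ to get $\mathrm{Res}_P^G\, o(\sigma) = o(\sigma|_P) = 0$. Mackey's formula (isolating the trivial double coset $P\cdot e\cdot P$) gives $\sigma|_P \cong \bar\rho \oplus \tau$ for some $\tau$, and therefore
\[ \mathrm{ad}(\sigma|_P) \cong \mathrm{ad}(\bar\rho) \oplus \mathrm{Hom}(\bar\rho,\tau) \oplus \mathrm{Hom}(\tau,\bar\rho) \oplus \mathrm{ad}(\tau) \]
as $k[P]$-modules. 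Computing the obstruction with a block-diagonal set-theoretic lift $\tilde\rho_0 \oplus \tilde\tau_0$ of $\sigma|_P$, the resulting $2$-cocycle is block-diagonal with the blocks being the $2$-cocycles that compute $o(\bar\rho)$ and $o(\tau)$; projecting the vanishing class $o(\sigma|_P)$ onto the $\mathrm{ad}(\bar\rho)$-factor of the above decomposition then yields $o(\bar\rho) = 0$, so $\bar\rho$ lifts.

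The only mildly subtle point is this additivity of the obstruction under direct sums, namely that $o(V_1\oplus V_2)$ has components $(o(V_1), o(V_2))$ with respect to the natural block decomposition of $\mathrm{ad}(V_1\oplus V_2)$. This is transparent from the block-diagonal cocycle calculation, so no serious obstacle is expected; the argument is essentially a packaging of standard cohomological formalism exploiting that $\mathrm{ad}(\bar\rho)$ is $p$-torsion while $[G:P]$ is coprime to $p$.
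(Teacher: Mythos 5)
Your proof is correct and takes essentially the same approach as the paper: the easy direction via injectivity of restriction to a Sylow $p$-subgroup on the $p$-torsion obstruction class, and the converse via inducing to $G$, using that $\bar\rho$ is a direct summand of the restriction to $P$ of the induced representation, and projecting the obstruction class onto the $\mathrm{ad}(\bar\rho)$-block. The paper phrases the converse contrapositively (a non-liftable $P$-representation induces a non-liftable $G$-representation), but this is the same argument, and your block-diagonal cocycle computation just makes explicit the summand-projection step the paper asserts.
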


\begin{proof}
 A   homomorphism $f: G \rightarrow \GL_n(k)$,  with $n$ a positive integer and $k$ a finite field, lifts to a homomorphism  $g:G \rightarrow \GL_n(W_2(k))$ if and only if the corresponding obstruction class $\alpha_f \in H^2(G,M_n(k))$ vanishes. If $P$ is a Sylow $p$-subgroup of $G$, the restriction of $\alpha_f$ to $P$ vanishes if and only if $\alpha_f=0$. This proves that if $P$ is $L_p$, then $G$ is $L_p$.
 
 To prove the converse, let  $g:P \rightarrow  \GL_m(k) $ be  a non-liftable  homomorphism afforded by a $P$-module
 $V_g$, a $k$-vector space of dimension $m$. Then we assert that the induction $V_f$ of $V_g$ from $P$ to $G$,  associated to a  homomorphism $f: G \rightarrow \Aut(V_f)$ is non-liftable.  To see this, observe that $V_f$ as a $P$-module has  $V_g$ as a direct summand, and the projection of $\alpha_f|_P$ to  the summand $H^2(P, \End(V_g))$ of  $H^2(P, \End(V_f))$ is  $\alpha_g$ and hence non-zero.

\end{proof}

 Ali Cheraghi pointed out to us the following strengthening of  one direction  of  Lemma \ref{reduction to p groups} upon using Mackey theory.

\begin{lem}\label{ali}
A finite group  $G$ is $L$ (or $L_p$) if and only every subgroup of $G$ is $L$ (or $L_p$). 
\label{semidirect}
\end{lem}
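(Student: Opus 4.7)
The \emph{if} direction is immediate since $G$ is a subgroup of itself, so I will focus on the converse. My plan is to extend the induced-representation argument from the second paragraph of the proof of Lemma \ref{reduction to p groups} from Sylow $p$-subgroups to arbitrary subgroups $H\leq G$, by invoking Mackey's decomposition formula in place of the Sylow-theoretic obstruction comparison used there.

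Given a subgroup $H\leq G$ and a (continuous) homomorphism $f:H\to \GL_m(k)$ affording the $k[H]$-module $V_f$, I would form the induced $k[G]$-module $V_{\tilde f}=\mr{Ind}_H^G V_f$, corresponding to a homomorphism $\tilde f:G\to \GL_{m[G:H]}(k)$. Because $G$ is $L$ (resp.\ $L_p$), the obstruction class $\alpha_{\tilde f}\in H^2(G,\End(V_{\tilde f}))$ vanishes, and hence so does its restriction $\mr{res}^G_H(\alpha_{\tilde f})\in H^2(H,\End(\mr{Res}^G_H V_{\tilde f}))$. Equivalently, the $k[H]$-module $\mr{Res}^G_H V_{\tilde f}$ lifts to a $W_2(k)[H]$-module.

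To extract a lift of $V_f$ itself, I would apply Mackey's formula
\[
\mr{Res}^G_H \mr{Ind}_H^G V_f \;\cong\; \bigoplus_{g\in H\backslash G/H}\mr{Ind}^H_{H\cap gHg^{-1}}{}^g V_f,
\]
whose trivial double coset $HeH$ contributes a summand isomorphic to $V_f$. Writing $\mr{Res}^G_H V_{\tilde f}=V_f\oplus W$ as $k[H]$-modules, the natural inclusion $\iota:V_f\hookrightarrow V_f\oplus W$ and projection $\pi:V_f\oplus W\twoheadrightarrow V_f$ yield an $H$-equivariant map $\End(\mr{Res}^G_H V_{\tilde f})\twoheadrightarrow \End(V_f)$ sending $\phi\mapsto \pi\circ\phi\circ\iota$. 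The plan is then to verify that this projection carries the obstruction class of $\mr{Res}^G_H V_{\tilde f}$ to $\alpha_f$, whence vanishing of the former forces $\alpha_f=0$ and $f$ lifts to $\GL_m(W_2(k))$.

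The step I expect to require the most care is the compatibility assertion in the last sentence, that the obstruction for a direct sum of $H$-modules projects to the obstruction for each summand. This should be routine from the cochain-level definition of the obstruction: a set-theoretic $W_2(k)$-lift of the $H$-action on $V_f\oplus W$ that respects the splitting produces a block-diagonal obstruction $2$-cocycle whose diagonal blocks are the obstruction cocycles of $V_f$ and $W$, and alternative lifts differ by coboundaries. I therefore do not foresee any serious obstacle beyond this bookkeeping; the argument is formal once Mackey's formula and this compatibility are in hand.
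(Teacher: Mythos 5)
Your proposal is correct and is essentially the paper's own argument: the paper also induces from $H$ to $G$, uses Mackey's formula to identify $V_f$ (there called $V_g$) as a direct summand of the restriction of the induced module, and uses the compatibility of the lifting obstruction class with restriction and with direct summands — the only difference being that the paper phrases it contrapositively (a non-liftable representation of $H$ induces to a non-liftable representation of $G$), which is logically the same argument. The block-diagonal cochain computation you flag is exactly the routine compatibility the paper invokes, so there is no gap.
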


\begin{proof}
Only one direction needs proof.
The proof is close to the proof of Lemma \ref{reduction to p groups}. Let  $g:H \rightarrow  \GL_m(k) $ be  a non-liftable  homomorphism afforded by a $H$-module
 $V_g$, a $k$-vector space of dimension $m$. Then we assert that the induction $V_f$ of $V_g$ from $H$ to $G$,  associated to a  homomorphism $f: G \rightarrow \Aut(V_f)$ is non-liftable.  To see this, observe that $V_f$ as a $H$-module has  $V_g$ as a direct summand by Mackey theory, and thus $\End(V_g)$ is a direct summand of the $H$-module $\End(V_g)$ which implies that   $H^2(H, \End(V_g)) \hookrightarrow H^2(G, \End(V_f)) $.  Consider the obstructions $\alpha_f$ and $\alpha_g$ to lifting $V_f$ and $V_g$ to $W_2(k)$.  We deduce that  the projection of $\alpha_f|_H$ to  the summand $H^2(H, \End(V_g))$ of  $H^2(H, \End(V_f))$ is  $\alpha_g$ and hence $\alpha_f$ is non-zero. 
\end{proof}

\begin{prop}\label{implication}
Guess~\ref{local} implies Guess~\ref{strong}.
\end{prop}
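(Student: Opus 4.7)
The plan is to deduce both directions of Guess~\ref{strong} from Guess~\ref{local} together with the structural results already in hand. The ``if'' direction needs no part of Guess~\ref{local}: each of $\Z/2^n\Z$, $\Z/3\Z\times\Z/2^n\Z$, and $\Z/3\Z\rtimes\Z/2^n\Z$ has all of its Sylow subgroups among cyclic $2$-groups and (copies of) $\Z/3\Z$, so by Lemma~\ref{reduction to p groups} liftability reduces to the facts that $\Z/2^n\Z$ is $L_2$ (Proposition~\ref{power of 2}) and $\Z/3\Z$ is $L_3$ (Proposition~\ref{cyclic p-groups}).

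For the ``only if'' direction, assume $G$ is $L$; by Lemma~\ref{ali}, every subgroup of $G$ is also $L$. First I would rule out primes $p>3$ dividing $|G|$: otherwise Cauchy's theorem would yield $\Z/p\Z\le G$, which would be $L$, contradicting the $p>3$ half of Guess~\ref{local}. Hence $|G|=2^a 3^b$. Next, the Sylow $2$-subgroup of $G$ is $L_2$, hence cyclic of some order $2^n$ by the $p=2$ half of Guess~\ref{local}, and the Sylow $3$-subgroup is $L_3$, hence trivial or $\Z/3\Z$ by the $p=3$ half. Thus $|G|\in\{2^n,\,3\cdot 2^n\}$.

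If $|G|=2^n$ then $G=\Z/2^n\Z$. If $|G|=3\cdot 2^n$, write $P\cong \Z/2^n\Z$ and $Q\cong\Z/3\Z$ for the Sylow subgroups; I would show $Q$ is normal via Burnside's normal $p$-complement theorem applied to $P$. Since $P$ is abelian we have $P\le C_G(P)$, so $|N_G(P)/C_G(P)|$ divides $|G|/|P|=3$; on the other hand it embeds into $\Aut(P)$, which has $2$-power order and hence divides $2^{n-1}$. So $N_G(P)=C_G(P)$, $P$ lies in the centre of its normaliser, and Burnside furnishes a normal $2$-complement, necessarily equal to $Q$. Then $G=Q\rtimes P$, and the action of $P$ on $Q$ factors through $\Aut(Q)\cong\Z/2\Z$, giving either the trivial action (producing $\Z/3\Z\times\Z/2^n\Z$) or inversion (producing $\Z/3\Z\rtimes\Z/2^n\Z$). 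The only substantive step is this Burnside argument, a routine finite-group-theoretic calculation; all remaining work is bookkeeping with Lemma~\ref{ali} and Guess~\ref{local}.
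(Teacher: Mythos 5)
Your proof is correct, and its overall architecture matches the paper's: reduce the ``if'' direction to the Sylow subgroups via Lemma~\ref{reduction to p groups} together with Propositions~\ref{cyclic p-groups} and \ref{power of 2}, and for ``only if'' use Guess~\ref{local} to force $|G|\in\{2^n,3\cdot 2^n\}$ with cyclic Sylow $2$-subgroup and Sylow $3$-subgroup of order dividing $3$, then classify such groups. (Your detour through Cauchy and Lemma~\ref{ali} to exclude primes $p>3$ is harmless but unnecessary, since Guess~\ref{local} directly asserts that such a $G$ is not $L$.) Where you genuinely diverge is in the group-theoretic core, namely establishing normality of the Sylow $3$-subgroup $Q$ in the case $|G|=3\cdot 2^n$: you apply Burnside's normal $p$-complement theorem to the cyclic Sylow $2$-subgroup $P$, using $P\le C_G(P)$ and the $2$-power order of $\Aut(P)$ to get $N_G(P)=C_G(P)$, and Burnside then hands you $Q$ as the normal $2$-complement. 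The paper instead argues elementarily: the action on the three cosets of $G_2$ gives either $G_2$ normal (whence $G\cong\Z/3\Z\times\Z/2^n\Z$, since a generator of $G_3$ centralizes any cyclic $2$-group it normalizes) or a normal $G_2'$ with $G/G_2'\cong S_3$, in which case $G_3\times G_2'$ has index $2$, so $N_G(G_3)$ has index at most $2$ and Sylow's third theorem forces $G_3$ normal. Your route is shorter at the cost of invoking transfer theory; the paper's stays within first-principles Sylow arguments and the observation about automorphism groups of cyclic $2$-groups. After normality both proofs finish identically, noting that the $P$-action factors through $\Aut(\Z/3\Z)\cong\Z/2\Z$, so only the direct product and the unique non-direct semidirect product $\Z/3\Z\rtimes\Z/2^n\Z$ occur.
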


\begin{proof}
It is clear that the Sylow subgroups of the three families of groups in  Guess~\ref{strong} are all in $\{\Z/3\Z\}\cup
 \{\Z/2^n\Z\mid n\in \N\}$.
We claim that these are the only groups for which this is true.  If $G$ is such a group, it must be of order $2^n$ or $3\cdot 2^n$.  In the first case we are done.  In the second
case, let $G_2\cong \Z/2^n\Z$ and $G_3\cong \Z/3\Z$ be Sylow subgroups, and let $y$ be a generator of $G_3$.  
The order of the automorphism group of any cyclic $2$-group is a power of $2$, so $y$ centralizes
any $2$-group in $G$ which it normalizes.
As $G_2$ is of index $3$ in $G$, either it is normal or it contains a subgroup $G'_2$ which is normal
in $G$ and for which $G/G'_2\cong S_3$.  In the former case, $y$ normalizes $G_2$, so $G\cong \Z/3\Z\times G_2$.  

We therefore assume that there exists $G'_2$ as above.  As $y$ centralizes $G'_2$, we have
$G_3 \times G'_2$ is an index $2$ subgroup of $G$.  Thus, the normalizer of $G_3$ in $G$ has index $\le 2$, which, by the third Sylow theorem, implies $G_3$ is normal in $G$.  The conjugation action of $G_2$ on $G_3$ factors through $G_2/G'_2\cong \Z/2\Z = \Aut(\Z/3\Z)$, so there is a unique non-direct semidirect product $\Z/3\Z \rtimes  \Z/2^n\Z$, and $G$ must be isomorphic to that product.

We conclude the proof by appealing to Proposition \ref{reduction to p groups}.
\end{proof}

\begin{rem}
If it turns out that generalized quaternionic groups  $Q_{2^n}$ (of order $2^n$)  are liftable, as their automorphism groups are again of $2$-power order, the proof above will show that the liftable groups are  $\Z/2^n\Z$, $\Z/3\Z\times\Z/2^n\Z$,  $\Z/3\Z\rtimes \Z/2^n\Z$, $Q_{2^n}$, $\Z/3\Z\times Q_{2^n}$, or $\Z/3\Z\rtimes Q_{2^n}$.
\end{rem}

\begin{quest}
Is it true that every   group $G$  that is $L$ has the property that any homomorphism $f:G \rightarrow \GL_n(k)$ lifts  in fact all the way to $\GL_n(W(k))$? Is this true when restricted to finite groups $G$?
\end{quest}

\subsection{Liftable cyclic groups}

\begin{prop}
\label{cyclic p-groups}
     A cyclic group of prime order $p$ has property $L$ if and only if $ p\leq 3$.
\end{prop}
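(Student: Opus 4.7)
The plan is to prove the two directions separately via explicit matrix constructions. For the liftability direction ($p \in \{2, 3\}$), let $f: \Z/p\Z \to \GL_n(k)$ and $A := f(1)$. If $\chr(k) \ne p$, then $p$ is a unit in $k$, so $H^i(\Z/p\Z, M_n(k)) = 0$ for $i > 0$ and the obstruction to lifting vanishes. Assume $\chr(k) = p$, so $A = I + N$ with $N^p = 0$. Conjugating $A$ over $k$ into Jordan normal form and lifting the conjugating matrix arbitrarily to $\GL_n(W_2(k))$, it suffices to lift each unipotent Jordan block (of size $r \le p$) to an order-$p$ element of $\GL_r(W_2(k))$. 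Size $1$ is trivial, and the size-$p$ block is the regular representation of $\Z/p\Z$, lifted by the permutation matrix of the $p$-cycle in $S_p \subset \GL_p(\Z) \to \GL_p(W_2(k))$. For $p \le 3$ the only remaining case is size $2$: the block $\begin{pmatrix} 1 & 1 \\ 0 & 1 \end{pmatrix}$ lifts to $\begin{pmatrix} 1 & 1 \\ 0 & -1 \end{pmatrix}$ when $p=2$ (square equals $I$) and to $\begin{pmatrix} 1 & 1 \\ -3 & -2 \end{pmatrix}$ when $p=3$ (cube equals $I$, verified directly in $\GL_2(\Z)$).

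For the non-liftability direction ($p \ge 5$), it suffices to exhibit a single representation that does not lift. Take $f: \Z/p\Z \to \GL_2(\F_p)$ with $f(1) = I + N_0$, where $N_0 = \begin{pmatrix} 0 & 1 \\ 0 & 0 \end{pmatrix}$. A hypothetical lift has the form $M = I + N' \in \GL_2(\Z/p^2\Z)$ with $N' = N_0 + pN_1$ for some $N_1 \in M_2(\F_p)$, subject to $M^p = I$. Since $N_0^2 = 0$, the only contribution to $(N')^k$ modulo $p^2$ for $k \ge 2$ comes from the single-factor-of-$pN_1$ terms $p \sum_{i=0}^{k-1} N_0^i N_1 N_0^{k-1-i}$, and a summand survives only if both $i \le 1$ and $k-1-i \le 1$. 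Combined with $p \mid \binom{p}{k}$ for $1 \le k \le p-1$, every intermediate term $\binom{p}{k}(N')^k$ vanishes modulo $p^2$. For $p \ge 5$ the top term $(N')^p$ vanishes too, because the constraints $i \le 1$ and $i \ge p-2$ have no common solution. Therefore $M^p \equiv I + p N_0 \pmod{p^2}$, which is never $I$ since $N_0 \ne 0$, giving the required contradiction.

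The main obstacle is the combinatorial bookkeeping in the non-liftability computation: one must carefully track which terms in the expansion of $(I + N_0 + p N_1)^p$ survive modulo $p^2$ given that $N_0^2 = 0$. The decisive asymmetry between $p \le 3$ and $p \ge 5$ is that the top term $(N')^p$ contributes a usable correction ($2(N_0 N_1 + N_1 N_0)$ for $p = 2$, and $3 N_0 N_1 N_0$ for $p = 3$) capable of cancelling the obstruction $p N_0$, but no such correction survives when $p \ge 5$.
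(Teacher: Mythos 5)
Your proof is correct and follows essentially the same route as the paper: for $p\le 3$ you reduce to unipotent Jordan blocks and lift them by explicit integer matrices (the size-$p$ block via the regular representation, just as the paper lifts via multiplication by $X$ on $A[X]/(X^2+X+1)$ and $A[X]/(X^3-1)$), and for $p\ge 5$ your $2\times 2$ unipotent block computation showing $M^p\equiv I+pN_0\pmod{p^2}$ is exactly the $n=1$ case of the paper's argument for $\Z/p^n\Z$ (Proposition~\ref{odd power}). No gaps; the bookkeeping of which terms survive mod $p^2$ matches the paper's.
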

    
 \begin{proof}  The  proof that follows  is due to Serre.
 
 \begin{enumerate}

 \item  $p=2$:
 Let $k$ be any field of characteristic 2.
 Then we claim that every element $ x$ of order $2$ of  $\GL_n(k)$ lifts to an element of
order 2 of $\GL_n(A)$, where  $A$  is any commutative ring with a surjective
homomorphism  $A \to k$. We only have to check this when $ k = \F_2$ and $x$ is a Jordan matrix of size $n = 2$, which we may assume is of the form    \[   \left( \begin{array}{cc} 1 & 1 \\  0 & 1 \end{array} \right).\] This lifts to
\[   X = \left( \begin{array}{cc} -1 & 1 \\ 0 & 1 \end{array} \right)\] in $\GL_2(A)$.
 
 \item   $p=3$:  Let $k$ be any field of characteristic 3.
 Then we claim that every element $ x$ of order $3$ of  $\GL_n(k)$ lifts to an element of
order 3 of $\GL_n(A)$,  where  $A$  is any commutative ring with a surjective
homomorphism  $A \to k$. We only have to check this when $ k = \F_3$ and $x$ is a Jordan matrix of size $n = 2$ or $ 3$.
  
  \begin{itemize}
  
  \item  $n = 2$:  Consider the algebra $B=A[X]/(X^2+X+1)$ which is free of rank $2$ over $A$,
and view  multiplication by $X$  an an $A$-linear endomorphism of $B \cong  A^2$. We have  $X^2+X+1= 0$, hence
$X^3=1$.  The matrix of  $X$ is of the form    \[   \left( \begin{array}{cc} 0 & 1 \\ -1 & 1 \end{array} \right).\] By reduction mod 3, we have an element of order 3
of $\GL_2(k)$, hence what we want.

 \item $n=3$:  Consider multiplication by $X$ as an endomorphism of $B= A[X]/(X^3-1)$ regarded as a free module of rank $3$ over $A$. One has to check that
its reduction mod 3 is a Jordan matrix of size 3. But this means looking at multiplication
by  $x \in  k[x]/(x^3-1)$, which is the same as  $k[t]/(t^3)$, and we are done.
  
  \end{itemize}
  
\item  $p>3$:  This is well known, but we reprove it as part of Proposition~\ref{odd power} below.

\end{enumerate}

\end{proof}

We add to the list of liftable groups.

\begin{prop}
\label{power of 2}
 The group $G=\Z/2^n\Z$ is liftable.
\end{prop}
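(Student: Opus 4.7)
The plan is to reduce the problem to lifting a single Jordan block in characteristic~$2$. By Lemma~\ref{reduction to p groups}, $\Z/2^n\Z$ is $L_p$ automatically for every odd prime~$p$ (its Sylow $p$-subgroup being trivial), so I only need to prove $\Z/2^n\Z$ is $L_2$. Fix a finite field $k$ of characteristic~$2$ and a homomorphism $f:\Z/2^n\Z \to \GL_N(k)$. Then $Y:=f(1)$ has $2$-power order and is therefore unipotent (the only $2^n$-th root of unity in characteristic~$2$ is~$1$), so over $k$ it is conjugate to a direct sum of Jordan blocks $J_{r_1}\oplus\cdots\oplus J_{r_s}$; the identity $(I+N_{r_i})^{2^n}=I+N_{r_i}^{2^n}$ forces each $r_i\leq 2^n$. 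Since $\GL_N(W_2(k))\twoheadrightarrow \GL_N(k)$ is surjective, any $k$-conjugator lifts, and it suffices to lift a single Jordan block $J_r$ with $r\leq 2^n$ to an element of order dividing $2^n$ in $\GL_r(W_2(k))$.

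The main construction is as follows. By Kummer's theorem, $v_2\bigl(\binom{2^n}{j}\bigr)=n-v_2(j)$ for $0<j<2^n$, so in $W_2(k)$ the only nonzero values of $\binom{2^n}{j}$ on $0\leq j\leq 2^n$ are $\binom{2^n}{0}=\binom{2^n}{2^n}=1$ and $\binom{2^n}{2^{n-1}}=2$. Consequently,
\[
(1+T)^{2^n} \;=\; 1 + 2\,T^{2^{n-1}} + T^{2^n} \qquad \text{in } W_2(k)[T].
\]
I would then define the monic polynomial
\[
P_r(T) \;=\; \begin{cases} T^r & \text{if } r\leq 2^{n-1}, \\ T^r + 2\,T^{r-2^{n-1}} & \text{if } 2^{n-1}<r\leq 2^n, \end{cases}
\]
and set $A_r := W_2(k)[T]/P_r(T)$, which is a free $W_2(k)$-module of rank~$r$ with basis $1,T,\ldots,T^{r-1}$ whose mod~$2$ reduction is $k[T]/T^r$. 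In $A_r$ one checks $(1+T)^{2^n}=1$ immediately: in the first case both $2T^{2^{n-1}}$ and $T^{2^n}$ already vanish modulo $T^r$, while in the second case the relation $T^r\equiv -2\,T^{r-2^{n-1}}$ yields $T^{2^n}\equiv -2\,T^{2^{n-1}}$, cancelling the middle term. Hence multiplication by $\sigma:=1+T$ on $A_r$ is an element of $\GL_r(W_2(k))$ of order dividing~$2^n$ whose reduction modulo~$2$ is the Jordan block $J_r$, furnishing the required lift.

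The conceptual hurdle is the case $r>2^{n-1}$: the naive algebra $W_2(k)[T]/T^r$ does not suffice there, because the obstruction term $2T^{2^{n-1}}$ appearing in $(1+T)^{2^n}$ survives modulo $T^r$. The correcting summand $2\,T^{r-2^{n-1}}$ in the defining relation of $A_r$ is precisely what is needed to kill this obstruction (via $T^{2^n}\equiv -2\,T^{2^{n-1}}$), while keeping $P_r$ monic so that $A_r$ remains a free $W_2(k)$-module of the correct rank with the correct mod~$2$ reduction. Everything else---the reduction to individual Jordan blocks and the reassembly into a block-diagonal lift of $Y$---is routine.
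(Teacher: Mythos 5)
Your argument is correct, and it follows the paper's broad strategy (reduce to a single unipotent Jordan block of size $r\le 2^n$, then realize the lift as multiplication by a generator on a rank-$r$ quotient of a polynomial ring) but diverges at the key construction. The paper lifts over $\Z$: after an induction on $n$ that reduces to blocks of exact order $2^n$ (so $2^{n-1}<r\le 2^n$), it uses the factorization $u^{2^n}-1=(u-1)(u+1)(u^2+1)\cdots(u^{2^{n-1}}+1)$ to find a monic divisor $P(u)\in\Z[u]$ of degree $r$ whose reduction is $(u-1)^r$, and takes multiplication by $u$ on $\Z[u]/(P)$. You instead work directly over $W_2(k)$, exploiting the identity $(1+T)^{2^n}=1+2T^{2^{n-1}}+T^{2^n}$ modulo $4$ (via $v_2\binom{2^n}{j}=n-v_2(j)$) and choosing the ad hoc monic relation $T^r$ or $T^r+2T^{r-2^{n-1}}$ to kill the obstruction term; this avoids both the induction on $n$ and the degree-availability argument, and is a clean, self-contained mod-$p^2$ computation. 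The trade-off is generality: the paper's polynomial divides $u^{2^n}-1$ in $\Z[u]$, so its element has exact $2$-power order over $\Z$ and hence over any coefficient ring mapping onto $k$ --- in particular it lifts $\Z/2^n\Z$ all the way to $\GL_r(W(k))$ in characteristic zero, which is relevant to the paper's later question about lifting to $W(k)$ --- whereas your relation only makes $(1+T)^{2^n}=1$ modulo $4$, so it produces exactly the $W_2(k)$-lift demanded by the proposition and nothing more. All the supporting steps you use (unipotence of $2$-power order elements in characteristic $2$, the bound $r\le 2^n$, surjectivity of $\GL_N(W_2(k))\to\GL_N(k)$ to lift conjugators, and the block-diagonal reassembly) are sound.
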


\begin{proof}
For odd primes $p$, the $L_p$ condition is trivial.  For the $L_2$-condition, we prove more:
when $k$ is of characteristic $2$, every element $x$ of order $2^n$ in  $\GL_m(k)$ lifts to an element of order $2^n$ in  $\GL_m(A) $ where  $A$  is any commutative ring with a surjective
homomorphism  $A \to k$.  It suffices to prove this for $A=\Z$.  We  decompose $x$ into Jordan blocks, all of which must have eigenvalue $1$, and all of which have order dividing $2^n$.
By induction on $n$, we may assume that each block is of order $2^n$, and it suffices to treat the case of a single block, which means $2^{n-1}<m\le 2^n$.

The irreducible factors
$$u-1,u+1,u^2+1,\ldots,u^{2^{n-1}}+1$$
of $u^{2^n}-1$ have degrees $1,1,2,4,\ldots,2^{n-1}$, and $m$ can be written as a sum of terms in this sequence.  It follows that there exists a polynomial
$P(u) \in \Z[u]$ of degree $m$ which divides 
$u^{2^n}-1$.  

Consider the $\Z$-linear endomorphism given by multiplication by $X$ on the free $\Z$-module $M=\Z[X]/(P(X))$.  
Its mod 2 reduction is given by multiplication by $x$ on $\F_2[x]/(\bar P(x))$.  As $u^{2^k}+1\equiv (u-1)^{2^k}\pmod 2$,
it follows that $\bar P(x) = (x-1)^m$, so multiplication by $x$ has a single Jordan block with eigenvalue $1$.

\end{proof}

\begin{prop}
\label{odd power}
If $p\ge 3$ and $n\ge 2$ or $p\ge 5$ and $n\ge 1$, then $\Z/p^n\Z$ does not have property $L_p$.
\end{prop}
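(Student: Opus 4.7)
The plan is to exhibit, for each $(p,n)$ in the stated range, a specific representation $\rho\colon \Z/p^n\Z \to \GL_m(\F_p)$ which fails to lift to $\GL_m(\Z/p^2\Z)$. The natural candidate is a single Jordan block: set $m := p^{n-1} + 1$, let $N \in M_m(\F_p)$ be the nilpotent Jordan block of size $m$, and define $\rho$ by sending a generator $g \mapsto x := I + N$. Since $N^{p^{n-1}} = N^{m-1} \ne 0$ and $N^{p^n} = 0$ (as $p^n \ge m$), the element $x$ has order exactly $p^n$, so $\rho$ is well-defined.

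The heart of the argument will be a direct computation of $X^{p^n}$ for an arbitrary lift $X \in \GL_m(\Z/p^2\Z)$ of $x$. Such an $X$ can be written $X = I + N + pM$ with $M \in M_m(\F_p)$ arbitrary (using the canonical lift of $N$). Setting $U = N + pM$, the relation $p^2 = 0$ in $\Z/p^2\Z$ gives
\[
U^j = N^j + p\,\phi_j(M), \qquad \phi_j(M) := \sum_{a+b=j-1} N^a M N^b,
\]
since any product containing two or more factors of $pM$ vanishes. Expanding $X^{p^n} = \sum_{j=0}^{p^n}\binom{p^n}{j}U^j$ splits into $(I+N)^{p^n}$ plus $p\sum_j \binom{p^n}{j}\phi_j(M)$.

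Two combinatorial inputs finish the calculation. First, $v_p\bigl(\binom{p^n}{j}\bigr) = n - v_p(j)$ for $1 \le j \le p^n - 1$, so every middle binomial vanishes modulo $p$; this kills the $M$-dependent sum except at $j = p^n$, and reduces $(I+N)^{p^n}$ modulo $p^2$ to $I + \sum_{k=1}^{p-1}\binom{p^n}{kp^{n-1}} N^{kp^{n-1}}$, where a short Lucas-theorem calculation gives $\binom{p^n}{p^{n-1}} \equiv p \pmod{p^2}$. Second, the hypothesis on $(p,n)$ is equivalent to $p^{n-1}(p-2) > 1$, which yields $p^n - 1 > 2(m-1)$: thus every monomial of $\phi_{p^n}(M) = \sum_{a+b=p^n-1}N^a M N^b$ has $a \ge m$ or $b \ge m$, forcing $\phi_{p^n}(M) = 0$ for every $M$. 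The same inequality gives $N^{kp^{n-1}} = 0$ for $k \ge 2$, so only the $k=1$ term survives and the computation collapses to
\[
X^{p^n} = I + p\,N^{p^{n-1}},
\]
which differs from $I$ since $N^{p^{n-1}} \ne 0$.

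The only delicate point — and what I would flag as the main obstacle — is ensuring that no choice of $M$ can kill the obstruction $p\,N^{p^{n-1}}$. This is exactly the vanishing of $\phi_{p^n}(M)$, and it rests on the combinatorial inequality $p^{n-1}(p-2) > 1$. The excluded case $(p,n) = (3,1)$ is precisely where the inequality degenerates to equality: there $\phi_3(M) = NMN$ is a nonzero multiple of $N$ whose coefficient can be chosen to cancel the obstruction, consistent with $\Z/3\Z$ being $L_3$ by Proposition~\ref{cyclic p-groups}.
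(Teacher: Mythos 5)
Your proof is correct and follows essentially the same route as the paper: both take the single unipotent Jordan block of size $m=p^{n-1}+1$, expand an arbitrary lift $X=I+N+pM$ to get $X^{p^n}$, use the numerical hypothesis (equivalently $p^n-1>2(m-1)$) to kill every term $N^aMN^b$ with $a+b=p^n-1$, and conclude $X^{p^n}=I+(\text{unit})\,p\,N^{p^{n-1}}\neq I$. Your explicit Lucas computation of $\binom{p^n}{p^{n-1}}\equiv p \pmod{p^2}$ and the remark on the degenerate case $(p,n)=(3,1)$ are just slightly more detailed versions of the paper's argument.
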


\begin{proof}
Let $m=p^{n-1}+1$, so a unipotent Jordan block $x$ of size $m$ over $\F_p$ has order $p^n$.
Note that our hypotheses mean that the order is at least $2m+1$.
We claim that no lift $X$ of $x$ to $\GL_m(\Z/p^2\Z)$ has order $p^n$.
Indeed, any such $X$ can be written $X = I + N + p M$, where $N$ is a nilpotent matrix, so $N^m=0$.
Therefore,
\begin{align*}
X^{p^n} &= I + \sum_{i=1}^p \binom{p^n}{ip^{n-1}} (N+pM)^{ip^{n-1}} \\
&= I + \sum_{i=1}^{p-1} \binom{p^n}{ip^{n-1}} N^{ip^{n-1}} + N^{p^n} + p\sum_{j=0}^{p^n-1}
N^j M N^{p^n-1-j}.
\end{align*}
As $N^j=0$ or $N^{p^n-1-j}=0$ for all $j$, we have
$$X^{p^n} = I + p\sum_{i=1}^{p-1}\binom{p^n}{ip^{n-1}} N^{ip^{n-1}}.$$
As $(x-1)^{p^{n-1}}$ does not lie in the span of the set of higher powers of $x-1$ in $M_n(\F_p)$, it follows that $X^{p^n}\neq I$.
\end{proof}

\subsection{Liftable finite abelian groups}

We quote a result from \cite{serre:illusie}.

\begin{prop}(Serre)\label{serre}
  Let $k$ be a field of characteristic $p$, and $A$ a local ring  with residue field $k$. Let $1 <n \leq p$, and let $G$ be an abelian  subgroup of $\GL_n(k)$  of type $(p,p)$ such that for each $g\in G\setminus\{e\}$,  $g-1$ has kernel of dimension $n-1$. If $G$ lifts to $\GL_n(A)$, then $pA=0$.
\end{prop}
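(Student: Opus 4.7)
The plan is first to reduce to the case $\m_A^2 = 0$ and then exhibit the lift in an explicit matrix normal form where the relations $G_i^p = I$ and $G_1 G_2 = G_2 G_1$ combine to force $p \cdot u = 0$ for some unit $u \in A^\times$. Let $g_1, g_2$ generate $G$ and set $N_i = g_i - 1$. Each $N_i$ is nilpotent of rank one, so $N_i^2 = 0$, and commutativity gives $N_1 N_2 = N_2 N_1$. Writing $N_i = v_i \otimes \phi_i$ with $\phi_i(v_i) = 0$, the rank-one hypothesis on every $g_1^a g_2^b - 1 = aN_1 + bN_2 + ab N_1 N_2$ restricts the pair $(N_1, N_2)$, up to contragredient duality, to the case $\phi_1 = \phi_2 = \phi$ with $v_1, v_2$ linearly independent in $\ker \phi$ (the generic case, forcing $n \geq 3$); for $n = 2$ and $k \supsetneq \F_p$ a degenerate case arises with $v_1, v_2$ that are $k$-proportional but $\F_p$-linearly independent, handled by the same strategy with an auxiliary unit. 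In the generic case, choose a basis $(v_1, \ldots, v_{n-1}, e)$ with $\phi(e) = 1$, giving $N_1 = E_{1,n}$ and $N_2 = E_{2,n}$.

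Next, write $G_i = I + B_i$ with $B_i = E_{i,n} + M_i$ and $M_i \in \m_A \cdot M_n(A)$. Since $\m_A^2 = 0$, every entry of $M_i$ squares to zero so $M_i M_j = 0$, and $E_{1,n} E_{2,n} = 0$ for $n \geq 3$; a short computation yields
\[
B_i^2 = E_{i,n} M_i + M_i E_{i,n}, \quad B_i^3 = m^{(i)}_{n,i}\, E_{i,n}, \quad B_i^j = 0 \text{ for } j \geq 4,
\]
where $m^{(i)}_{n,i}$ denotes the $(n,i)$ entry of $M_i$. Because $\binom{p}{j} \in p\Z$ for $1 \leq j \leq p-1$ and $p \cdot M_i \in \m_A^2 = 0$, the identity $\sum_{j=1}^p \binom{p}{j} B_i^j = 0$ arising from $G_i^p = I$ collapses: for $p \geq 5$ only the term $p E_{i,n}$ survives, giving $pA = 0$ directly (consistent with the fact that even cyclic $\Z/p\Z$ fails to be $L_p$ here, by Proposition \ref{cyclic p-groups}); for $p = 3$ it becomes $(3 + m^{(i)}_{n,i}) E_{i,n} = 0$, forcing $m^{(i)}_{n,i} = -3$ in $A$.

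To close the $p = 3$ case, expand $[B_1, B_2] = [E_{1,n}, M_2] + [M_1, E_{2,n}] + [M_1, M_2] = 0$ and use $[M_1, M_2] \in \m_A^2 = 0$; the $(2,1)$ entry reads $-m^{(1)}_{n,1} = 0$, which combined with $m^{(1)}_{n,1} = -3$ gives $3 = 0$ in $A$, i.e., $pA = 0$. The step I expect to be most delicate is the normal-form classification in the first paragraph: rank-one commuting nilpotents in $\GL_n(k)$ admit several shapes, and only by using the rank-one hypothesis on all of $G \setminus \{e\}$ does one rule out the subcase where both $\{v_1, v_2\}$ and $\{\phi_1, \phi_2\}$ are independent (there $g_1 g_2 - 1 = v_1 \otimes \phi_1 + v_2 \otimes \phi_2$ would have rank $2$) and so reduce to the two contragredient-dual generic forms together with the degenerate $n = 2$ form.
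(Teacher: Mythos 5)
Your square-zero computation is essentially sound, but the opening move, ``reduce to the case $\m_A^2=0$,'' is the genuine gap. Proposition~\ref{serre} is asserted for an arbitrary local ring $A$, and the only way to invoke the square-zero case is to pass to $A/\m_A^2$; that yields $p\in\m_A^2$, which is strictly weaker than $pA=0$. Your argument therefore says nothing about local rings in which $p$ lies in $\m_A^2$ without vanishing, e.g.\ $A=\Z_p[\zeta_p]$, $\Z_p[\sqrt p]$, or the Artinian ring $\Z[\zeta_3]/(1-\zeta_3)^3$, and for such $A$ the simplifications you rely on ($pM_i=0$, $M_iM_j=0$, hence the collapse of the binomial expansion of $(I+B_i)^p$) are simply unavailable; no bootstrapping modulo higher powers of $\m_A$ is indicated, and none is routine. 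So as written you prove only the case $\m_A^2=0$ (equivalently, $p\in\m_A^2$ in general). That weaker statement does cover the paper's one application of the proposition (Proposition~\ref{p times p}, with $A=W_2(\F_{p^2})$), and for what it is worth the paper itself gives no proof of Proposition~\ref{serre} -- it is quoted from \cite{serre:illusie} -- the nearest in-paper analogue being the proof of Proposition~\ref{two powers of 2}, which extends Serre's method by working directly over the ring at hand and identifying the centralizer of the lifted generator with $A[X]$ before exploiting the order relation of the second generator; something of that sort, rather than a residue-ring truncation, is what the general statement seems to require.

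Within the square-zero setting, your generic-case computation ($N_1=E_{1,n}$, $N_2=E_{2,n}$, $n\ge 3$, including the $p=3$ endgame via the $(2,1)$ entry of the commutator) is correct. But the case analysis is off in two places. First, the ``degenerate'' configuration $N_2=\lambda N_1$ with $\lambda\in k\setminus\F_p$ is not confined to $n=2$: it occurs for every $n$ as soon as $k\supsetneq\F_p$, so it is not true that $n\ge 3$ forces the generic normal form. Second, $p=2$ (where necessarily $n=2$ and only the degenerate form can occur -- exactly the case the paper needs for Proposition~\ref{p times p} at $p=2$) is never treated: your displayed collapse covers only $p\ge 5$ and $p=3$, and $(I+B)^2=I$ produces a different-shaped relation. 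The ``same strategy with an auxiliary unit'' does in fact close these cases -- writing $B_2=\Lambda E_{1,n}+M_2$ with $\Lambda$ a lift of $\lambda$, the order relations give $(M_1)_{n,1}=-3$, $(M_2)_{n,1}=-3\Lambda^{-1}$ and commutativity gives $(M_2)_{n,1}=\Lambda(M_1)_{n,1}$, whence $3(\Lambda^2-1)=0$ with $\Lambda^2-1$ a unit; for $p=2$ one similarly gets $2(1+\Lambda)=0$ with $1+\Lambda$ a unit -- but in your write-up these cases are asserted rather than proved, so you should either carry them out or restructure the classification so that they are visibly covered.
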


\begin{prop}
\label{p times p}
 The group $\Z/p\Z \times \Z/p\Z$ does not have property $L_p$ for any prime $p$.  More precisely, there are   injective homomorphisms $f: \Z/p\Z \times \Z/p\Z \to \GL_2(\F_{p^2})$, and any such $f$ does not lift to $\GL_2(W_2(\F_{p^2}))$.
 
\end{prop}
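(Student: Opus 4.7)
The plan is to produce an injective homomorphism $f:\Z/p\Z\times \Z/p\Z\to \GL_2(\F_{p^2})$ and then appeal to Serre's Proposition~\ref{serre} with $n=2$ and $A=W_2(\F_{p^2})$ to rule out any mod $p^2$ lift.

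First I would exhibit $f$: take the subgroup of upper-triangular unipotent matrices
\[
U=\left\{\begin{pmatrix}1 & a\\ 0 & 1\end{pmatrix}:a\in\F_{p^2}\right\}\subset \GL_2(\F_{p^2}),
\]
which is isomorphic as a group to the additive group $(\F_{p^2},+)\cong \Z/p\Z\times \Z/p\Z$. The inclusion $U\hookrightarrow \GL_2(\F_{p^2})$ yields a concrete $f$, but the more important observation is that \emph{any} injective $f$ will do, because of step two.

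Next, to verify the hypotheses of Proposition~\ref{serre} for the image $G:=f(\Z/p\Z\times\Z/p\Z)$, I would use the fact that any element $g$ of order $p$ in $\GL_2(k)$, for $k$ of characteristic $p$, is unipotent: its minimal polynomial divides $x^p-1=(x-1)^p$. For $g\ne 1$ the matrix $g-1$ is then a nonzero nilpotent $2\times 2$ matrix, so $\ker(g-1)$ has dimension exactly $n-1=1$. Since $1<n=2\le p$ for every prime $p$, the hypotheses of Proposition~\ref{serre} hold for any such $G$.

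Finally, I would argue by contradiction. Suppose $f$ lifts to $\tilde f:\Z/p\Z\times \Z/p\Z\to \GL_2(W_2(\F_{p^2}))$. Because the image of $\tilde f$ reduces mod $p$ onto the order-$p^2$ group $G$, $\tilde f$ must itself be injective, and its image is an abelian subgroup of type $(p,p)$ lifting $G$. Since $W_2(\F_{p^2})$ is a local ring with residue field $\F_{p^2}$, Proposition~\ref{serre} forces $p\cdot W_2(\F_{p^2})=0$, contradicting the defining property that $p\ne 0$ in $W_2(\F_{p^2})$. There is no real obstacle here beyond cleanly verifying the kernel-dimension condition; the substantive content of the argument is packaged into Proposition~\ref{serre}.
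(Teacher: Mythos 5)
Your argument is correct and follows essentially the same route as the paper: exhibit an injective $f$ (any one has unipotent image, so each nontrivial element $g$ has $\ker(g-1)$ of dimension $1=n-1$) and apply Serre's Proposition~\ref{serre} with $A=W_2(\F_{p^2})$ to get the contradiction $pA=0$. The paper's proof is just a one-line appeal to the same proposition; you have merely spelled out the verification of its hypotheses, including the point that \emph{any} injective $f$ satisfies them.
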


\begin{proof}
We just have to observe that for all primes $p$,  there is an injective homomorphism  $f: \Z/p\Z \times \Z/p\Z \to \GL_2(\F_{p^2})$ whose image  satisfies the conditions of  the proposition above.
\end{proof}

Extending Serre's method  in  \cite{serre:illusie}, we get the following:

\begin{prop}
\label{two powers of 2}
For $m,n\ge 1$ integers, $\Z/2^m\Z\times \Z/2^n\Z$ does not have property $L$.
\end{prop}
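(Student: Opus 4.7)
The plan is to reduce quickly to the already-handled case of $\Z/2\Z \times \Z/2\Z$ via the subgroup criterion of Lemma~\ref{ali}. First I would observe that for $m,n\ge 1$, the 2-torsion subgroup $G[2]$ of $G = \Z/2^m\Z\times\Z/2^n\Z$ is isomorphic to $\Z/2\Z\times\Z/2\Z$: it is generated by $2^{m-1}$ times a generator of the first factor and $2^{n-1}$ times a generator of the second factor, and these two elements have order exactly $2$ and are independent.

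Next I would invoke Proposition~\ref{p times p}, which via Serre's Proposition~\ref{serre} shows that $\Z/2\Z\times\Z/2\Z$ does not have property $L_2$, and therefore does not have property $L$. Finally, applying Lemma~\ref{ali}, which says that a finite group has property $L$ if and only if every one of its subgroups does, I would conclude that $G$ cannot be $L$, since it contains the non-$L$ subgroup $G[2]$. This is all the proof requires.

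Conceptually this does ``extend Serre's method'' in the sense indicated by the authors: the actual mechanism inside the proof of Lemma~\ref{ali} is to induce Serre's non-liftable $\Z/2\Z\times\Z/2\Z$-representation of Proposition~\ref{p times p} up to $G$, and use Mackey's decomposition to see that the Serre obstruction class survives as a nonzero summand in $H^2(G,\End V)$. There is no real obstacle at this stage — the genuine work lies upstream in Serre's Proposition~\ref{serre} and in Cheraghi's subgroup argument (Lemma~\ref{ali}); once those are in place, the statement for $\Z/2^m\Z\times\Z/2^n\Z$ is a direct corollary, and no new explicit construction or cohomological calculation is needed.
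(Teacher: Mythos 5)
Your argument is correct and non-circular within the paper's framework: Lemma~\ref{ali} and Proposition~\ref{p times p} are established independently of (and before) this proposition, Proposition~\ref{p times p} does cover $p=2$ (the unitriangular copy of $\Z/2\Z\times\Z/2\Z$ inside $\GL_2(\F_4)$ satisfies the hypotheses of Proposition~\ref{serre}, since $1<n\le p$ allows $(n,p)=(2,2)$), and the 2-torsion subgroup of $\Z/2^m\Z\times\Z/2^n\Z$ is indeed $\Z/2\Z\times\Z/2\Z$, so the contrapositive of Lemma~\ref{ali} finishes the proof. However, this is genuinely different from what the paper does. The paper's proof does not invoke Lemma~\ref{ali} or Proposition~\ref{p times p} at all: it constructs an explicit representation of $\Z/2^m\Z\times\Z/2^n\Z$ on $\F_4^{2^m}$, sending the generators to $I+x$ and $I+\omega x^{2^{m-n}}$ with $x$ a single nilpotent Jordan block, and shows by a direct computation over $W_2(\F_4)$ (identifying the centralizer of a lift $X$ with $A[X]$ via a cyclic vector and Nakayama, then exploiting the identity $X^{2^m}=2X^{2^{m-1}}$) that no lift exists. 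What the paper's route buys is twofold: it is self-contained, resting on an explicit calculation rather than on the quoted result of Serre from \cite{serre:illusie}; and it produces a concrete non-liftable representation that is reused later, notably in Remark~\ref{nonrigid} and the discussion of rigidity in \S\ref{Rigidity}, where the specific $\Z/4\Z\times\Z/2\Z$ representation is shown to lift over $W(\F_4)[\sqrt 2]/(2\sqrt 2)$ and hence to be non-rigid. What your route buys is brevity: once Cheraghi's subgroup lemma is available, the bare statement is an immediate corollary of the $(p,p)$ case, with no new cohomological or matrix computation needed -- but it yields only the statement, not the explicit example the paper later exploits.
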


\begin{proof}
Without loss of generality, we assume $m\ge n$.  
Let $\omega\in\F_4$
denote an element of order $3$.  Let $x\in M_{2^m}(\F_4)$ denote a Jordan block of size $2^m$ with eigenvalue $0$.  Let $y=\omega x^{2^{m-n}}$.
The matrices $I+x$ and $I+y$ are of order $2^m$ and $2^n$ respectively, and they commute.  
We prove that the representation $\rho\colon \Z/2^m\Z\times \Z/2^n\Z\to \GL_{2^m}(\F_4)$
defined by 
$$\rho(i,j)= (I+x)^i(I+y)^j$$ 
does not lift to $\GL_{2^m}(W_2(\F_4))$.  

Let $A = W_2(\F_4)$.
By induction on $k\ge 1$, for every commutative $A$-algebra $B$ and every finite sequence $b_1,\ldots,b_r\in B$,
\begin{equation}
\label{two powers}
(b_1+\cdots+b_r)^{2^k} = \sum_{i=1}^r b_i^{2^k} + 2\sum_{1\le i < j\le r} b_i^{2^{k-1}} b_j^{2^{k-1}}.
\end{equation}

Suppose $X,Y\in M_{2^m}(A)$ lie over $x$ and $y$ respectively so that $(I+X)^{2^m}=I$,
$(I+Y)^{2^n}=I$, and $X$ and $Y$ commute.  Let $B:=A[X]$ denote the $A$-subring of $M_{2^m}(A)$ generated by $X$.
We have $(I+X)^{2^m} = I$, so by \eqref{two powers}, 
\begin{equation}
\label{reduction}
X^{2^m} = -2X^{2^{m-1}} = 2X^{2^{m-1}}.
\end{equation}
Therefore, $X^{2^{m+1}} = 4X^{2^m} = 0$, so $X$ is a nilpotent element of $B$.
In particular, $B$ is a local ring with residue field $\F_4$.

As a $B$-module, the $M_{2^m}(A)$-module $A^{2^m}$ is free of rank $1$.
Indeed, as $X$ acts on $(A/2A)^{2^m} = \F_4^{2^m}$ as a nilpotent matrix with a single Jordan block, 
there exists  $v\in A^{2^m}$
which reduces to a cyclic vector in $\F_4^{2^m}$.
For any such $v$, Nakayama's lemma implies
 $\{v, Xv, \ldots, X^{2^m-1}v\}$
is a basis for $A^{2^m}$ as $A$-module, so $\{v\}$ is a basis for the $B$-module structure.  
It follows that the centralizer of $X$ in $M_{2^m}(A)$ consists of polynomials in $X$ with coefficients in $A$.  We can therefore write
$$Y = \sum_i a_i X^i .$$

By \eqref{two powers},
$$I = (I+Y)^{2^n} = I + \sum_i a_i^{2^n} X^{i2^n} + 2\sum_i a_i^{2^{n-1}} X^{i2^{n-1}} +  2\sum_{i<j}a_i^{2^{n-1}} a_j^{2^{n-1}} X^{(i+j)2^{n-1}}.$$
Let $k=m-n$.
The mod 2 reduction of $Y$ is $y=\omega x^{2^k}$, so  $a_i\in 2A$ for $i<2^k$, 
while $a_{2^k}$ reduces to $\omega$ mod $2$, so $a_{2^k}^{2^n}+a_{2^k}^{2^{n-1}}$ is a unit $u\in A$.  By \eqref{reduction},
\begin{align*}
0 &= \sum_{i\ge 2^k}  a_i^{2^n} X^{i2^n} + 2\sum_{i\ge 2^k} a_i^{2^{n-1}} X^{i2^{n-1}} +  2\sum_{2^k\le i<j}a_i^{2^{n-1}} a_j^{2^{n-1}} X^{(i+j)2^{n-1}} \\
&=\sum_{i\ge 2^k} a_i^{2^n} X^{i 2^n} + \sum_{i\ge 2^k} a_i^{2^{n-1}} X^{(i+2^k)2^{n-1}} \\
& \equiv \bigl(a_{2^k}^{2^n}+a_{2^k}^{2^{n-1}}\bigr)X^{2^m}\equiv uX^{2^m}\pmod{X^{2^m+1}}.
\end{align*}
 
As  $X$ is nilpotent, this implies $X^{2^m} = 0$.  By \eqref{reduction}, $X^{2^{m-1}} \in 2 M_{2^m}(A)$, which is impossible since its mod 2 reduction $x^{2^{m-1}}$ is non-zero.

\end{proof}

\begin{rem}
\label{nonrigid}
Let $R = W(\F_4)[\sqrt 2]$.  The representation $\rho\colon \Z/4\Z\times \Z/2\Z\to \GL_{4}(\F_4)$ of Proposition~\ref{two powers of 2} lifts to $R/(2\sqrt 2)$.
To see this, we use Proposition~\ref{power of 2} to find $X\in \GL_4(R)$ such that $(I+X)^4 = I$ and $X$ reduces to a single nilpotent Jordan block over $\F_4$.
Thus,
$$X^4 \equiv 2X^2 \pmod{2\sqrt 2}.$$
Setting
$$Y = \sqrt 2 X + \zeta_3 X^2,$$
clearly $I+X$ and $I+Y$ commute, and $Y$ reduces to $\omega x^2$ in $M_4(\F_4)$, where $\omega$ denotes the reduction of $\zeta_3$.  
Squaring $I+Y$, we obtain
\begin{align*}
(I+Y)^2 &= I + 2X^2 + \zeta_3^2 X^4 + 2\sqrt 2 X + 2\zeta_3 X^2 + 2\sqrt 2\zeta_3 X^3 \\
& \equiv I + 2X^2 + \zeta_3^2 X^4 + 2\zeta_3 X^2 \\
& \equiv I + 2(1+\zeta_2^2+\zeta_3)X^2 \equiv I\pmod {2\sqrt 2}.
\end{align*}
\end{rem}

Our results so far lead to a classification of liftable finite abelian groups.

\begin{prop}\label{abelian}
A finite abelian group is $L$ if and only if it is cyclic of order $2^n$ or order $3\cdot 2^n$
for some non-negative integer $n$.
\end{prop}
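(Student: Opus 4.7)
The plan is to assemble the results already proved: for each prime $p$, combine the classification of liftable abelian $p$-groups (obtained from the propositions above) with the Sylow reduction (Lemma~\ref{reduction to p groups}) and the subgroup inheritance (Lemma~\ref{ali}).

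For the ``if'' direction, $\Z/2^n\Z$ is $L$ because it is $L_2$ by Proposition~\ref{power of 2} and trivially $L_p$ for all odd $p$. For $\Z/(3\cdot 2^n)\Z \cong \Z/3\Z \times \Z/2^n\Z$, the Sylow $2$-subgroup is $\Z/2^n\Z$ (which is $L_2$ by Proposition~\ref{power of 2}), the Sylow $3$-subgroup is $\Z/3\Z$ (which is $L_3$ by Proposition~\ref{cyclic p-groups}), and all other Sylow subgroups are trivial, so Lemma~\ref{reduction to p groups} yields that the group is $L_p$ for every prime $p$.

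For the ``only if'' direction, let $G$ be a finite abelian group with property $L$, and decompose $G = \prod_p G_p$ into Sylow subgroups. Lemma~\ref{reduction to p groups} implies that each $G_p$ is $L_p$, and Lemma~\ref{ali} implies that every subgroup of $G_p$ is $L_p$. For $p \ge 5$, Proposition~\ref{odd power} shows $\Z/p\Z$ is not $L_p$, forcing $G_p = 1$. For $p = 3$, Proposition~\ref{odd power} rules out $\Z/9\Z \subseteq G_3$ and Proposition~\ref{p times p} rules out $\Z/3\Z \times \Z/3\Z \subseteq G_3$; since every abelian $3$-group of order at least $9$ contains one of these, $G_3 \in \{1, \Z/3\Z\}$. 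For $p = 2$, Proposition~\ref{two powers of 2} rules out $\Z/2^a\Z \times \Z/2^b\Z \subseteq G_2$ for all $a,b \ge 1$, forcing $G_2$ to be cyclic of order $2^n$ for some $n \ge 0$. Assembling these cases, $G \cong \Z/2^n\Z$ or $G \cong \Z/3\Z \times \Z/2^n\Z \cong \Z/(3\cdot 2^n)\Z$.

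The proof amounts to bookkeeping, as all the substantive content lies in the earlier propositions. The crucial step enabling the ``only if'' direction is Lemma~\ref{ali}, which promotes non-$L_p$-ness of a small subgroup (such as $\Z/p\Z$ for $p \ge 5$, or $\Z/p\Z \times \Z/p\Z$, or $\Z/2^a\Z \times \Z/2^b\Z$) to non-$L_p$-ness of any finite group containing it. I do not foresee any significant obstacle.
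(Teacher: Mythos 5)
Your proof is correct and follows essentially the same route as the paper: reduce to Sylow $p$-subgroups, use Proposition~\ref{cyclic p-groups} and Proposition~\ref{power of 2} for the positive direction, and use Lemma~\ref{ali} (subgroup inheritance) together with Propositions~\ref{odd power}, \ref{p times p}, and \ref{two powers of 2} to exclude everything else. The paper's proof is just a terser statement of this same bookkeeping.
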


\begin{proof}
It suffices to prove that the  abelian $p$-groups which are $L$ are exactly the cyclic groups
of order $2^n$ and $3$.  In the positive direction, this follows from Propositions~\ref{cyclic p-groups} and \ref{power of 2}.  In the negative direction, it follows from 
the direct product case of Lemma~\ref{semidirect}, together with the basic cases 
in Propositions \ref{cyclic p-groups}, \ref{odd power},  \ref{p times p},  and \ref{two powers of 2}.

\end{proof}

We owe the following corollary to A. Cheraghi.

\begin{cor}\label{classification}
 For $p>3$ the only finite $p$-group that is $L_p$ is the trivial group. The only  finite 3-groups that are $L_3$ are the trivial group and $\Z/3\Z$.  For $p=2$, $\Z/2^n\Z$ is $L_2$, and if a finite 2-group $G$ is $L_2$, then $G$ is either cyclic or quaternionic.
\end{cor}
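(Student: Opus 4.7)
\smallskip

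\noindent\textbf{Proof proposal for Corollary \ref{classification}.} The plan is to combine Cheraghi's subgroup criterion (Lemma~\ref{ali}) with the non-liftability results for small groups already established in Propositions~\ref{cyclic p-groups}, \ref{odd power}, \ref{p times p}, and \ref{two powers of 2}, and then appeal to a classical classification theorem for 2-groups in the case $p=2$.

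For $p>3$, the plan is to observe that any nontrivial finite $p$-group $G$ contains an element of order $p$, hence a subgroup isomorphic to $\Z/p\Z$. By Proposition~\ref{cyclic p-groups}, $\Z/p\Z$ is not $L_p$. Lemma~\ref{ali} then forces $G$ itself to fail $L_p$. For $p=3$, the analogous step is to note that any finite 3-group of order $\geq 9$ contains a subgroup of order $9$ (by the standard fact that a finite $p$-group has subgroups of every order dividing $|G|$), and such a subgroup is isomorphic either to $\Z/9\Z$ or to $\Z/3\Z\times\Z/3\Z$. The first is not $L_3$ by Proposition~\ref{odd power}, and the second is not $L_3$ by Proposition~\ref{p times p}. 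Lemma~\ref{ali} then rules out all 3-groups except $\{1\}$ and $\Z/3\Z$, both of which are known to be liftable (the latter by Proposition~\ref{cyclic p-groups}).

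The case $p=2$ is the most interesting. The positive statement that $\Z/2^n\Z$ is $L_2$ is already Proposition~\ref{power of 2}. For the converse, suppose $G$ is a finite 2-group that is $L_2$. I would show first that $G$ cannot contain $\Z/2\Z\times\Z/2\Z$ as a subgroup: if it did, Lemma~\ref{ali} would force $\Z/2\Z\times\Z/2\Z$ to be $L_2$, contradicting Proposition~\ref{p times p} (or the stronger Proposition~\ref{two powers of 2}). Hence $G$ has a unique element of order $2$, i.e.\ a unique subgroup of order $2$.

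The final step is to invoke the classical theorem (going back to Burnside, see e.g.\ the standard treatments in finite group theory) that a finite 2-group with a unique subgroup of order $2$ is either cyclic or a generalized quaternion group. This immediately yields the stated dichotomy for $p=2$. The only substantive ingredient beyond the preceding propositions of the paper is this classification theorem; the main conceptual point is that all three cases collapse to a single recipe once Lemma~\ref{ali} is in hand, so there is no real obstacle beyond correctly identifying the minimal ``forbidden subgroups'' ($\Z/p\Z$ for $p>3$; $\Z/9\Z$ and $\Z/3\Z\times\Z/3\Z$ for $p=3$; and $\Z/2\Z\times\Z/2\Z$ for $p=2$) that rule out the remaining $p$-groups.
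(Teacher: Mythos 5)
Your proposal is correct and follows essentially the same route as the paper: the paper's proof is precisely Lemma~\ref{ali} combined with the classification of liftable abelian groups (Proposition~\ref{abelian}, whose proof is exactly the package of Propositions~\ref{cyclic p-groups}, \ref{odd power}, \ref{p times p}, \ref{two powers of 2} that you cite) together with the classical classification of finite $p$-groups all of whose abelian subgroups are cyclic. Your use of the unique-involution formulation for $p=2$ is just a restatement of that same classical theorem, so there is no substantive difference.
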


\begin{proof}
 This follows from  Lemma \ref{ali}, Proposition \ref{abelian} and the classification of finite  $p$-groups with the property that every abelian subgroup is cyclic.
\end{proof}

\begin{rem}
 The smallest $2$-group for which we do not know whether $L_2$ holds is the quaternionic group $Q_8$ of order 8. \end{rem}

\subsection{Strong rigidity for finite groups}\label{Rigidity}

As Serre suggested to us, one could look at a stronger rigidity  property of  a finite  subgroup $G$ of $\GL_n(k)$ ($\chr(k) = p$),  than just not lifting to $\GL_n(W_2(k))$.

 We say that a map $G \to \GL_n(k)$ is {\it rigid}
if  for   any  local Artinian  ring $A$ with residue field $k$ such that $G$ lifts to $\GL_n(A)$,  we have that $pA=0$. Proposition \ref{serre} gives examples of rigid embeddings of $G=\Z/p\Z \times \Z/p\Z$ in $\GL_n(k)$.
However, Remark~\ref{nonrigid} shows that the embedding of $\Z/4\Z \times \Z/2\Z$ given in Proposition~\ref{two powers of 2} is not rigid.
If $pA=0$, then $k \hookrightarrow A$, and we call the resulting lift of $G$ to $\GL_n(A)$ the trivial, or constant,  lift.    If $G \to  \GL_n(k) $  has the further property that if  it  lifts to $\GL_n(A)$, then $pA=0$ and the lift  is conjugate  to  the trivial   lift,  then we say that   $G \to  \GL_n(k) $  is {\it strongly rigid}.  Note that rigidity  of $ G \to \GL_n(k)$ is a {\it relative} property of $G$ with respect to a homomorphism $G \to \GL_n(k)$,  while
the liftable property $L$ depends only on the group $G$.

 Let $\Ad$ be the $G$-module $M_n(k)$ under conjugation action of $G$.

\begin{prop}\label{rigidity} $ $
 The following are equivalent:

 \begin{itemize}
 
 \item  $ G \to \GL_n(k)$ satisfies the two properties:
\begin{enumerate}
 
 \item   $G \to \GL_n(k)$ does not lift to $\GL_n(W_2(k))$;
 
 \item $H^1(G,\Ad)=0$.
 \end{enumerate}
 
 \item   The embedding $G \hookrightarrow \GL_n(k)$  is strongly  rigid.

 \end{itemize}
 
 \end{prop}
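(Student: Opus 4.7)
The plan is to prove the two implications separately; the harder direction---that (1) and (2) together imply strong rigidity---contains almost all the content.

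For the reverse direction, assume $\rho\colon G\hookrightarrow\GL_n(k)$ is strongly rigid. Condition (1) is immediate: a lift to $\GL_n(W_2(k))$ would force $pW_2(k)=0$, which is absurd. For (2), apply strong rigidity to $A = k[\epsilon]/(\epsilon^2)$; the standard identification of lifts of $\rho$ to $\GL_n(A)$ modulo conjugation with $H^1(G,\Ad)$, sending the constant lift to $0$, then forces $H^1(G,\Ad)=0$.

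For the forward direction, assume (1) and (2), and let $\tilde\rho\colon G\to\GL_n(A)$ be any lift of $\rho$ to a local Artinian $A$ with residue field $k$. I first show $pA = 0$. If not, after replacing $A$ by a proper quotient of minimal length still supporting a lift with $pA\ne 0$, I may assume every nonzero ideal of $A$ contains $p$. Then $pA$ is the unique minimal ideal of $A$; as a simple $A$-module it is $\cong k$, and Nakayama gives $\m_A\cdot pA=0$, whence $p^2=0$ in $A$. Writing $\bar A := A/pA$, which is a local Artinian $k$-algebra, iterated use of (2) along the filtration of $\bar A$ by powers of $\m_{\bar A}$ shows that every lift of $\rho$ to $\GL_n(\bar A)$ is $\GL_n(\bar A)$-conjugate to the constant lift $\rho\otimes_k\bar A$ (each layer controlled by $H^1(G,\Ad\otimes_k J)=0$ for the $k$-vector space $J = \m_{\bar A}^i/\m_{\bar A}^{i+1}$). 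Lifting the conjugator to $\GL_n(A)$ via surjectivity of $\GL_n(A)\to\GL_n(\bar A)$, I may assume $\tilde\rho(g)\bmod pA = \rho(g)\in\GL_n(k)\subset\GL_n(\bar A)$ for every $g\in G$.

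Now set $C := \{a\in A : a\bmod pA \in k\}$, the preimage in $A$ of $k\subset\bar A$. This is a subring of $A$ containing $pA$, with $C/pA\cong k$ and maximal ideal $\m_C = pA = p\cdot C$ (the last equality because the simple $A$-module $pA$ is generated by $p$); moreover $p\ne 0$ and $p^2=0$ in $C$. Cohen structure theory identifies this length-two local ring with $W_2(k)$. By construction, $\tilde\rho(g)\in\GL_n(C)$ for every $g$, so $\tilde\rho$ factors through $\GL_n(W_2(k))$, contradicting (1). Hence $pA=0$; rerunning the iterated deformation argument with $A$ itself (now a local Artinian $k$-algebra) in place of $\bar A$ then shows $\tilde\rho$ is $\GL_n(A)$-conjugate to the constant lift. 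The main technical step is the construction of the subring $C\cong W_2(k)$ inside $A$, which is what allows (1) to rule out $pA\ne 0$; the rest reduces to Nakayama plus standard deformation theory along a filtration.
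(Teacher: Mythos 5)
Your proof is correct, and while it shares the paper's overall skeleton, its decisive step is genuinely different. Both arguments first pass to an Artinian quotient where $p^2=0$ and $p\,\m_A=0$ (your socle reduction, via a minimal quotient in which every nonzero ideal contains $p$, versus the paper's reduction to $p\m=0$), and both then use $H^1(G,\Ad)=0$ and a d\'evissage along the filtration of $A/pA$ to arrange that the given lift reduces mod $pA$ to the constant lift. At that point the paper invokes the universal property of Witt vectors to get a ring homomorphism $W_2(k)\to A$ and compares obstruction classes: $\alpha_{W_2(k)}\neq 0$ pushes forward to $\alpha_A=0$ under $H^2(G,\Ad)\to H^2(G,\Ad)\otimes_k pA$, forcing $p=0$ in $A$. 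You instead realize $W_2(k)$ \emph{inside} $A$: after the normalization the lift takes values in the subring $C$, the preimage of the coefficient field $k\subset A/pA$, and your socle reduction (so $pA\cong k$ and $\m_C=pC$, $p^2=0$, $p\neq 0$) identifies $C$ with $W_2(k)$, contradicting property (1) directly with no obstruction-class functoriality. The identification $C\cong W_2(k)$ deserves one more line than you give it (the Cohen--Witt map $W_2(k)\to C$ lifting the identity of $k$ is injective because $p\neq 0$ in $C$, and surjective because its image hits every residue class and contains $pC$), but that is routine; likewise "proper quotient" should just read "quotient," since $A$ itself may already be minimal. Your explicit treatment of the converse via $A=k[\epsilon]/(\epsilon^2)$ fills in what the paper dismisses as easy. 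The trade-off: the paper's obstruction argument needs no shrinking of $pA$ to a line, while your subring construction is more elementary and exhibits the forbidden $\GL_n(W_2(k))$-lift concretely.
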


\begin{proof}
 Assume $ G \to  \GL_n(k)$ satisfies  properties (1)  and  (2), and can  can be lifted to  $\GL_n(A)$ with $(A,\m)$  a local Artinian   ring with residue field $k$.   We want to show that $pA=0$. We may assume $p\m = 0$. We have a lifting of $G$ to $\GL_n( A)$. This gives a lifting  of $G$ to $\GL_n( A/pA)$. On the other hand, we have the trivial (constant) lifting of $G$ to $\GL_n(A/pA)$ since $A/pA$ contains $k$. Since 
$$H^1(G,M_n(\m/pA))\cong H^1(G,M_n(k))^{\dim_k \m/pA} = 0,$$
the two liftings are conjugate, so we may identify them.  

As $p^2=0$ in $A$, the universal property for Witt rings gives a homomorphism $i\colon W_2(k)\to A$.  
%The only non-trivial ideal of $W_2(k)$ is $2W_2(k)$, and this cannot be the kernel of $W_2(k)\to A$ since $pA\neq 0$, so the homomorphism is injective.
If $\alpha_{W_2(k)} \in H^2(G,M_n(pW_2(k))$ and $\alpha_A \in H^2(G,M_n(pA))$ denote, respectively, the obstruction to lifting
$G \to \GL_n(k)$ to $\GL_n(W_2(k))$ and $G \to  \GL_n( A/pA)$  to $\GL_n(A)$,
then $\alpha_{W_2(k)}\mapsto \alpha_A$ under the map 
$$H^2(G,\Ad) = H^2(G,M_n(pW_2(k))\to  H^2(G,M_n(pA)) = H^2(G,\Ad)\otimes_k (pA).$$
This map is given by the identity on $H^2(G,\Ad)$ tensored with the restriction of $i$ to
$pW_2(k)\cong k$.  Since $\alpha_{W_2(k)} \neq 0$ and $\alpha_A = 0$, it follows that $i(pW_2(k)) = 0$, so $i(p)=p=0$ in $A$, so $pA = 0$.
We deduce that $G \to  \GL_n(k)$  is  strongly  rigid. The converse direction is easy. 
\end{proof}

\begin{rem} $ $

\begin{itemize}

\item   If $G \subset \GL_n(k)$ acts  absolutely irreducibly on $k^n$, the deformations of $G$ to local $W(k) $-algebras  $A$ with residue field $k$ is representable by  a local  $W(k)$-algebra $R$ with residue field $k$. The fact that $H^1(G,\Ad)=0$, implies that the Zariski mod $p$ tangent space $\m/(\m^2,p)=0$,  with $\m$ the maximal ideal of $R$, and thus that  $R$ is a quotient of $W(k)$. If $G$ does not lift to $W_2(k)$ we further get that  $R=k$.

\item The rigid example of $G=\Z/p\Z \times \Z/p\Z \hookrightarrow \GL_n(k)$ of Proposition \ref{serre} is not  strongly rigid, in fact $H^1(G,\Ad) \neq 0$ as we prove below. 
\end{itemize}

\end{rem}

\begin{lem}
 If  $k$ is a field of characteristic  $p$, $G$ a finite $p$-group that is not cyclic,  and $V$ is a finite-dimensional $k[G]$-module  on which some element  $g \in G$ acts on $V$  as a single Jordan block, then $H^1(G,V)\neq 0$. 
 
 \end{lem}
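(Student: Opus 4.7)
My plan is to produce a nonzero class in $H^1(G,V)$ by inflation from an appropriately chosen quotient of order $p$. The intuition is that although restriction to $\langle g\rangle$ can lose the relevant cohomological information (in the extreme case where $g$ acts as a Jordan block of its own order, $H^1(\langle g\rangle,V)$ actually vanishes), the non-cyclicity of $G$ gives enough extra room to fit $\langle g\rangle$ inside a proper normal subgroup, and inflation from the resulting cyclic quotient manufactures the desired class.

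Concretely, since $G$ is a non-cyclic finite $p$-group, the cyclic subgroup $\langle g\rangle$ is proper. Every proper subgroup of a finite $p$-group is contained in a maximal subgroup, and maximal subgroups of $p$-groups are automatically normal of index $p$. So I can choose $M\triangleleft G$ with $\langle g\rangle\subseteq M$ and $G/M\cong\Z/p\Z$. I would then analyze $V^M$: it is nonzero because $M$ is a nontrivial $p$-group acting on a nonzero $k$-vector space in characteristic $p$; and it is contained in $V^{\langle g\rangle}=\ker(g-1|_V)$, which is one-dimensional by the single-Jordan-block hypothesis. Hence $V^M$ is exactly one-dimensional over $k$. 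Because $G/M$ is a $p$-group and $\Aut_k(V^M)=k^{\times}$ has no $p$-torsion, the induced $G/M$-action on $V^M$ is trivial, so $V^M\cong k$ as a $G/M$-module.

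The rest is cohomological bookkeeping via the five-term inflation--restriction sequence
\[
0\longrightarrow H^1(G/M,V^M)\longrightarrow H^1(G,V)\longrightarrow H^1(M,V)^{G/M}.
\]
Since $G/M\cong\Z/p\Z$ acts trivially on $V^M\cong k$, the standard computation for cyclic groups gives $H^1(G/M,V^M)\cong\mathrm{Hom}(\Z/p\Z,k)\cong k\ne 0$, and injectivity of inflation forces $H^1(G,V)\ne 0$.

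The argument is short and I do not anticipate any genuine obstacle; every input is standard. The one step that requires both hypotheses in essential ways is the sandwich $0\neq V^M\subseteq V^{\langle g\rangle}$: non-cyclicity of $G$ is what allows $\langle g\rangle$ to sit inside a proper normal subgroup $M$ of index $p$ (so that there is a nontrivial $G/M$ to inflate from), while the single-Jordan-block hypothesis is what pins $V^M$ to dimension exactly one so that the $G/M$-module structure on $V^M$ is forced to be trivial and produces a class.
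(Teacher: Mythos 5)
Your argument is correct, but it is a genuinely different route from the one in the paper. You exploit the subgroup structure of $p$-groups: since $G$ is not cyclic, $\langle g\rangle$ is proper, so it sits inside a maximal (hence normal, index $p$) subgroup $M$; the sandwich $0\neq V^M\subseteq V^{\langle g\rangle}=\ker(g-1)$ pins $V^M$ down to a one-dimensional trivial $G/M$-module, and injectivity of inflation in the five-term sequence transports the nonzero class from $H^1(G/M,k)\cong k$ into $H^1(G,V)$. The paper instead works entirely inside $G$-cohomology: it sets $V_1=V^G$ (one-dimensional by the same fixed-point argument), forms $0\to V_1\to V\to V/V_1\to 0$, uses the single-Jordan-block hypothesis to see that every nonzero sub- or quotient-module has one-dimensional invariants, and then reads off from the resulting sequence $0\to k\to k\to k\to H^1(G,k)\to H^1(G,V)$ that the kernel of $H^1(G,k)\to H^1(G,V)$ is at most one-dimensional; non-cyclicity enters there through the Frattini-quotient bound $\dim_k H^1(G,k)=\dim_k\operatorname{Hom}(G,k)\ge 2$. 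So the two proofs use non-cyclicity differently: you use it to find a proper normal subgroup containing $\langle g\rangle$ and only need $H^1(\Z/p\Z,k)\neq 0$, while the paper uses it for the rank-two estimate on $\operatorname{Hom}(G,k)$ and needs no choice of subgroup; your version also handles the one-dimensional case uniformly, whereas the paper splits it off as easy. Both proofs draw on the same two elementary inputs — nontriviality of fixed points for $p$-groups in characteristic $p$, and one-dimensionality of $\ker(g-1)$ for a single Jordan block — so the difference is one of architecture rather than depth.
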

 
\begin{proof}
Let $ h^i(G,V)={\rm dim}_k H^i(G,V)$ for a $k[G]$-module $V$. The case of $V$ of dimension 1 over $k$ is easy, so we assume that ${\rm dim}_k(V)>1$.
Under our hypothesis for any non-zero  $k[G]$-submodule or $k[G]$-quotient module $W$ of $V$, $h^0(G,W)=1$. If we let  $V_1  = V^G$, then $V_1$ is one-dimensional.  Let $V_2 = V/V_1$, and use the cohomology sequence for $0\to V_1\to V\to V_2\to 0$.  We get:
$$0 \to k \to k \to k \to H^1(G,k) \to H^1(G,V).$$   As $G$ is not cyclic $h^1(G,k) \geq 2$, so $h^1(G,V)\ge 1$.
\end{proof}

The following corollary responds to a question Serre asked us: when  does $\SL_n(k) \hookrightarrow \GL_n(k)$ have the
property that if there is a  lift of $\SL_n(k)$ to $\GL_n(A)$ for  a local ring $A$ with residue field $k$ then $pA=0$?

\begin{cor}\label{rigid finite Lie groups}
For $n>1$, and for finite fields $k$ of characteristic $p$  with $(p, n)=1$, $\SL_n(k) \hookrightarrow \GL_n(k)$ is strongly rigid except when:

(i) $n=2$ and $|k| \le 5$;

(ii) $n=3$ and $|k| \leq 3$.
\end{cor}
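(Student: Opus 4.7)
The plan is to apply Proposition~\ref{rigidity} to the natural embedding $\iota\colon\SL_n(k)\hookrightarrow\GL_n(k)$: strong rigidity of $\iota$ is equivalent to the conjunction of
(a) $\iota$ does not lift to $\GL_n(W_2(k))$, and
(b) $H^1(\SL_n(k),\Ad)=0$, where $\Ad=M_n(k)$ under conjugation. I would verify (a) and (b) for the non-exceptional $(n,|k|)$ and then separately exhibit, in each excluded case, an explicit lift violating strong rigidity.

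For (b), I would use that $(p,n)=1$ gives the $\SL_n(k)$-module decomposition $\Ad=k\cdot I\oplus \sl_n(k)$. Since $\SL_n(k)$ is perfect throughout our range, $H^1(\SL_n(k),k)=\mathrm{Hom}(\SL_n(k),k)=0$. The vanishing $H^1(\SL_n(k),\sl_n(k))=0$ is a classical result in the mod-$p$ cohomology of finite groups of Lie type (Cline--Parshall--Scott and successors); the short list of small-case exceptions for which this $H^1$ fails to vanish coincides with the $(n,|k|)$ excluded from the statement.

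For (a), I would restrict $\iota$ to a well-chosen $p$-subgroup and invoke the non-liftability results of \S\ref{Finite Groups}. When $|k|=p^r$ with $r\ge 2$, the centre of the upper-triangular unipotent radical of $\SL_n(k)$ contains $(\Z/p\Z)^2$ whose non-identity elements act on $k^n$ with rank-one defect; if $n\le p$, which covers $n=2$ with $p$ odd and $n=3$ with $p\ge 5$, Proposition~\ref{serre} rules out any lift to $\GL_n(W_2(k))$. When $|k|=p$ with $p\ge 5$, I would restrict instead to a cyclic unipotent $\Z/p\Z\subset \SL_n(k)$ embedded as a Jordan block, applying Proposition~\ref{odd power} for $n=2$ directly, and for $n=3$ via a straightforward extension of its proof to a $3\times 3$ Jordan block. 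The principal obstacle is the subcase $n=3,p=2,|k|\ge 4$, where Proposition~\ref{serre} fails since $n>p$; I would handle it by a direct obstruction computation on the centre $Z(U)\cong(\F_q,+)$ of the unipotent radical of $\SL_3(\F_q)$, embedded in $\GL_3$ as $c\mapsto I+cE_{13}$. Any pair of order-$2$ lifts $\tilde u,\tilde v\in\GL_3(W_2(\F_q))$ of $I+E_{13}$ and $I+\omega E_{13}$ (with $\omega\in\F_4\setminus\F_2$) must satisfy, by the order-$2$ constraints, $E_{13}M+ME_{13}\equiv E_{13}\pmod 2$ for both $M=M_u,M_v$; substituting into the commutator equation forces $(1+\omega)E_{13}\equiv 0\pmod 2$, which contradicts $1+\omega\neq 0$ in $\F_4$.

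Finally, to confirm that the three listed exceptions genuinely fail strong rigidity, I would observe that $\SL_2(\F_3)$, $\SL_2(\F_5)$, and $\SL_3(\F_2)\cong\mathrm{PSL}_2(\F_7)$ are the binary tetrahedral, binary icosahedral, and simple group of order $168$, respectively, each admitting a faithful $n$-dimensional characteristic-zero representation whose mod-$p$ reduction is isomorphic to $\iota$. Reducing modulo $p^2$ in a suitable ramified extension of $W(k)$ then yields an explicit lift of $\iota$ to a local ring $A$ with $pA\neq 0$, contradicting strong rigidity. Beyond the computation in the $n=3,p=2$ subcase of (a), the remaining sensitive input is matching the precise list of $(n,|k|)$ with $H^1(\SL_n(k),\sl_n(k))\neq 0$ to the exceptions in the statement.
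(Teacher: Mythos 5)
Your framework is the paper's: apply Proposition~\ref{rigidity}, check non-liftability plus $H^1(\SL_n(k),\Ad)=0$, with the same decomposition $\Ad=k\oplus(\Ad/k)$, perfectness for the trivial summand, and Cline--Parshall--Scott for the rest (the paper supplements CPS, which covers $|k|\ge 7$ or $n\ge 3,|k|\ge 4$, with the Jones--Parshall tables for $|k|=2,3$). The genuine gap is in your property (a): the corollary is asserted for \emph{all} $n>1$ with $(p,n)=1$, but your non-liftability argument only treats $n=2$ and $n=3$. Your restriction-to-$p$-subgroups strategy does not extend as stated: Proposition~\ref{serre} requires $n\le p$, so it says nothing about, e.g., $n=4$, $p=3$ or $n=5$, $p=2,3$; and when $|k|=p$ the centre of the unipotent radical is only $\Z/p\Z$, so neither Serre's proposition nor your two-generator commutator trick is available, while a Jordan-block reduction \`a la Proposition~\ref{odd power} inside $\SL_n(\F_p)$ needs $p$ large relative to $n$. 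The paper avoids all of this by citing Vasiu's theorem \cite[Theorem~1.3]{vasiu-surj}, which gives non-liftability of $\SL_n(k)$ to $\SL_n(W_2(k))$ for every $n\ge 2$ and every finite $k$ outside exactly the exceptional cases $(n,|k|)\in\{(2,2),(2,3),(3,2)\}$ (together with the Serre exercise argument over prime fields, $p\ge 5$). Without an input of this kind, your proof establishes the corollary only for $n\in\{2,3\}$.

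Two smaller points. The pieces you do have are essentially sound: the restriction principle (a lift of the big group restricts to a lift of any subgroup) is valid, and your $n=3$, $p=2$ commutator computation is correct and is a nice substitute for Proposition~\ref{serre} in the range $n>p$ --- though you should take $\omega$ to be any element of $k\setminus\F_2$ rather than of $\F_4\setminus\F_2$, since $\F_4\not\subset\F_{2^s}$ for $s$ odd. But your claim that the cases with $H^1(\SL_n(k),\Ad/k)\neq 0$ ``coincide'' with the excluded list is not right: for instance $H^1(\SL_2(\F_3),\Ad)=0$ (inflation--restriction through $Q_8$), and $(2,3)$, $(3,2)$ are exceptions because the embedding genuinely lifts (Vasiu; equivalently your characteristic-zero models), not because $H^1$ fails to vanish. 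This mis-attribution does not hurt the positive direction provided CPS/Jones--Parshall give vanishing outside the excluded list, and your last paragraph does correctly supply the failure of strong rigidity in the three exceptional cases, but the bookkeeping of which of the two properties fails where should be fixed.
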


\begin{proof}
We need to verify when  $\SL_n(k) \hookrightarrow \GL_n(k)$ satisfies properties 1 and 2 of Proposition \ref{rigidity}.

An exercise in \cite[IV-27]{serre:ladic} asserts that for $p \ge 5$ and $n\ge 2$, the only closed subgroup of $\SL_n(\Z_p)$
mapping onto $\SL_n(\F_p)$ is the full group.  This implies that $\SL_n(\Z/p^2\Z)\to \SL_n(\F_p)$ does not split.  As
$\SL_n(\F_p)$ is perfect, the composition of any homomorphism $\SL_n(\F_p)\to \GL_n(\Z/p^2\Z)$ with the determinant map
must be trivial, and it follows that the embedding $\SL_n(\F_p)\to \GL_n(\F_p)$ does not lift to $\SL_n(\F_p)\to \GL_n(\Z/p^2\Z)$.
In fact, for $n\ge 2$, it is known \cite[Theorem~1.3]{vasiu-surj} that $\SL_n(k)$ does not lift to $\SL_n(W_2(k))$ (or, therefore to $\GL_n(W_2(k))$) except when $n=2, |k|=2,3$ and $n=3,|k|=2$.

Regarding property 2, 
assuming $p$ does not divide $n$, $\Ad$ decomposes as a direct sum $k\oplus (\Ad/k)$.  
As $\SL_n(k)$ is perfect, $H^1(\SL_n(k),k) = 0$.
A theorem of Cline, Parshall, and Scott \cite[Table (4.5)]{cline-parshall-scott} asserts that if $p$ does not divide $n$,
then $H^1(\SL_n(k),\Ad/k)=0$ if $|k|\ge 7$ or $n\ge 3$ and $|k|\ge 4$.  The cases $|k|=2$ and $|k|=3$ are covered by \cite[Tables B and C]{jones-parshall}.

\end{proof}

\section{Two dimensional representations and negligible classes}\label{Negligible}

Lur'e \cite{lurye} in 1990 introduced the following  property of cohomology classes in  $H^i(G,M)$ for finite groups  $G$  and finite $G$-modules $M$ that  was further 
 studied by Serre    in  his Coll\`ege de France course 1990--91   \cite[pp.~212--222, pp.~435--442]{serre:oeuvres}.
\begin{defn}
 Given a finite group $G$ and  a  finite $G$-module $M$, we say that a class $\alpha \in H^i(G,M)$ is \emph{negligible} if   for any  field $L$ with absolute Galois group $G_L$, and   any  continuous map $f: G_L \to G$,  $f^*(\alpha)=0$.
\end{defn}

%\subsection{Two dimensional representations}

We begin by remarking that the lifting question Question \ref{galois}  for   representations $\rhobar:G_L \to \GL_n(k)$, when $k$ is a finite field of characteristic $p$ and $L$ has characteristic $p$, is easily answered in the positive by the following lemma.

\begin{lem}\label{trivial}
 If $L$ is a field of characteristic $p$, and $k$ is a  finite field of characteristic $p$, then for any $k[G_L]$-module $M$, with $M$ a $k$-vector space of finite dimension, $H^2(L,M)=0$.
\end{lem}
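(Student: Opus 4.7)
I would base the proof on Artin--Schreier theory, which forces Galois cohomology in characteristic $p$ with $p$-torsion coefficients to vanish in high degrees. First I would exhibit the short exact sequence of continuous $G_L$-modules
$$0 \to \F_p \to L^s \xrightarrow{\wp} L^s \to 0,$$
where $\wp(x) = x^p - x$; surjectivity holds because $X^p - X - a$ is separable (its derivative is $-1$) and hence splits in $L^s$. By the normal basis theorem, any finite Galois extension $L'/L$ with group $\Gamma$ is free as an $L[\Gamma]$-module; thus $H^i(\Gamma, L') = 0$ for $i \geq 1$, and passing to the direct limit over finite Galois subextensions of $L^s/L$ gives $H^i(G_L, L^s) = 0$ for all $i \geq 1$. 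The long exact sequence associated to the Artin--Schreier sequence then yields $H^i(G_L, \F_p) = 0$ for all $i \geq 2$.

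Next, the same argument applies verbatim with $G_L$ replaced by any open subgroup $G_{L'}$, since $L'$ also has characteristic $p$. By the standard criterion for cohomological dimension (Serre, \emph{Cohomologie Galoisienne}, Ch.~I, \S 3.3, Prop.~21), this implies $\mathrm{cd}_p(G_L) \leq 1$, so $H^2(G_L, N) = 0$ for every discrete $p$-primary torsion $G_L$-module $N$. Since $M$ is finite of $p$-power order --- being a finite-dimensional $k$-vector space with $k$ of characteristic $p$ --- applying this to $N = M$ completes the proof.

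The argument has no real obstacle: everything rests on Artin--Schreier theory, whose vanishing for characteristic-$p$ coefficients has no analogue in mixed characteristic (where $\mathrm{cd}_p(G_L) \geq 2$ in the settings of interest elsewhere in the paper, which is precisely what makes the remainder of the paper nontrivial). If one wishes to avoid invoking Serre's criterion, the step from $\F_p$-coefficients to arbitrary $p$-torsion $M$ can be carried out directly by d\'evissage along a composition series of $M$ as a $G_L$-module, combined with Shapiro's lemma to reduce each simple factor to the already-established $\F_p$-vanishing over a suitable open subgroup.
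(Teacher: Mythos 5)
Your proof is correct and takes exactly the route the paper intends: the paper's entire proof is the remark that the lemma is ``a standard consequence of Artin--Schreier theory'' (with a citation to B\"ockle), and your argument simply spells out that standard reasoning --- the Artin--Schreier sequence plus additive Hilbert 90 give $H^i(G_{L'},\F_p)=0$ for $i\ge 2$ over every finite separable extension $L'$, whence $\operatorname{cd}_p(G_L)\le 1$ and the vanishing for all finite $p$-torsion coefficients $M$. Only a cosmetic caveat: the passage from vanishing over all open subgroups to $\operatorname{cd}_p\le 1$ is most cleanly quoted via the pro-$p$-Sylow criterion (Serre, \emph{Cohomologie Galoisienne}, I.3.3 together with I.4.1) rather than the single proposition you cite, but the fact you use is true and standard.
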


\begin{proof}
 This is a standard consequence of Artin-Schreier theory, see \cite[Theorem 1.7]{bockle:modp2}.
\end{proof}

The following result is proved using the method of \cite[Theorem 1]{K-JNT} (see also \cite[Theorem 6.1]{mfcdc}).

\begin{prop}\label{lifting n=2}

If $L$ is a field and $G_L$ its absolute Galois group,  for any  field $k$ of characteristic $p$, and any ring $R$ with a surjective map $R \to k$ and $p^2R=0$,  a continuous homomorphism 
    $f:G_L \rightarrow \GL_2(k)$ lifts to  a  continuous homomorphism  $g: G_L \rightarrow \GL_2(R)$.
\end{prop}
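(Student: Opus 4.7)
The plan is to construct a lift $g$ by cases according to the structure of $f$ as a $2$-dimensional representation, following the strategy of \cite[Theorem 1]{K-JNT}. The obstruction to such a lift lies in $H^2(G_L, M_2(I))$, where $I := \ker(R \to k)$ with $G_L$ acting by $\Ad f$. Since $\chr k = p$ and $p^2 R = 0$ force $pR \subseteq I$, the ideal $I$ admits a filtration $I \supset pR \supset 0$ whose graded pieces are $k$-vector spaces carrying the $\Ad f$-action; by d\'evissage it suffices to verify vanishing of the obstruction with $k[G_L]$-module coefficients. When $\chr L = p$, this is immediate from Lemma \ref{trivial}, so henceforth assume $\chr L \ne p$ and invoke Kummer theory.

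The first key input is lifting characters: any continuous $\chi : G_L \to k^*$ lifts to a continuous $\tilde\chi : G_L \to R^*$. To see this, use the exact sequence $1 \to 1+I \to R^* \to k^* \to 1$ together with the filtration of $1+I$ by $1+pR$; the Kummer-theoretic surjection $L^*/(L^*)^{p^2} \twoheadrightarrow L^*/(L^*)^p$ yields surjectivity $H^1(G_L, R^*) \twoheadrightarrow H^1(G_L, k^*)$ on the $p$-primary part, and the prime-to-$p$ part lifts uniquely by Hensel-type considerations. With character lifting in hand, I would handle $f$ case by case: for $f = \chi_1 \oplus \chi_2$, take $g = \tilde\chi_1 \oplus \tilde\chi_2$; for $f = \mathrm{Ind}_H^{G_L}\chi$ with $[G_L : H] = 2$, take $g = \mathrm{Ind}_H^{G_L}\tilde\chi$; for $f$ reducible non-split with diagonal characters $\chi_1,\chi_2$ and extension class $\xi \in H^1(G_L, k(\chi_1\chi_2^{-1}))$, lift the $\chi_i$ first and then lift $\xi$ using the surjectivity of the coefficient map $H^1(G_L, R(\tilde\chi_1\tilde\chi_2^{-1})) \twoheadrightarrow H^1(G_L, k(\chi_1\chi_2^{-1}))$, which reduces again to the Kummer input above.

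The main obstacle is the absolutely irreducible case. Here the $2$-dimensional structure is used crucially: any $f$ irreducible over $k$ but not absolutely irreducible becomes induced from the index-$2$ stabilizer of a line over $k^{\mathrm{sep}}$, reducing this subcase to the induced case already handled. For absolutely irreducible $f$, the argument of \cite{K-JNT} constructs the lift by exploiting the explicit structure of $\mathrm{PGL}_2$: one works with the projective image, lifts it using the Kummer description of $H^1$, and then adjusts the determinant lift to yield an honest $\GL_2$-valued lift. The obstruction in this case ultimately reduces to a single Kummer class in $H^1(G_L,\mu_p)$-type coefficients, which always lifts. The contrast with dimension $3$ is precisely what Question \ref{quest:cup-product} isolates --- in higher rank the obstruction becomes a cup product whose vanishing after lift is nontrivial, while in rank $2$ no such cup product appears, which is the reason Serre's remark guarantees the algebraic (all-fields) generality of the theorem.
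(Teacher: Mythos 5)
Your overall framing (obstruction class in $H^2(G_L,\Ad)$, d\'evissage on the kernel, disposing of $\chr L=p$ via Lemma \ref{trivial}) matches the paper, but the proof then takes a case-by-case route that never actually closes, and the crucial case is exactly the one you leave to a gesture. For absolutely irreducible $f$, "work with the projective image, lift it using the Kummer description of $H^1$, then adjust the determinant; the obstruction reduces to a single Kummer class" is not an argument, and it is not how \cite{K-JNT} or the present paper proceeds. The actual mechanism is a single uniform reduction: the obstruction class $\alpha_f\in H^2(G_L,\Ad)$ is $p$-torsion, so restriction to any open subgroup of index prime to $p$ is injective; one therefore replaces $L$ by the prime-to-$p$ extension $M$ obtained by adjoining $\mu_p$ and passing to the preimage of a Sylow $p$-subgroup of the finite image of $f$. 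Over $M$ the image is elementary abelian unipotent, its splitting field is $M(x_1^{1/p},\ldots,x_r^{1/p})$ by Kummer theory, and one writes down an explicit lift of $f|_{G_M}$ with values in the matrices $\bigl(\begin{smallmatrix}\epsilon & \ast \\ 0 & 1\end{smallmatrix}\bigr)\subset \GL_2(R)$, $\epsilon$ the mod $p^2$ cyclotomic character, using the extension $M(\mu_{p^2}, x_i^{1/p^2})$; hence $\alpha_f|_{G_M}=0$ and so $\alpha_f=0$. This one step makes any decomposition of $f$ by representation type unnecessary, and without it your plan has no proof in the irreducible case, which is the whole content of the proposition.

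There are also gaps in the cases you do treat. The surjectivity of $H^1(G_L, R(\tilde\chi_1\tilde\chi_2^{-1}))\to H^1(G_L,k(\chi_1\chi_2^{-1}))$ is not "the Kummer input": Kummer theory gives surjectivity only for the cyclotomic coefficients $\mu_{p^2}\to\mu_p$, and over an abstract field there is no reason for $H^1$-surjectivity with an arbitrary character twist unless you first restrict to the kernel of the twist character (an open subgroup of prime-to-$p$ index) --- which is again the restriction-of-the-obstruction trick your proposal never invokes. The claim that an irreducible but not absolutely irreducible $f$ is induced from an index-$2$ subgroup of $G_L$ is false: such an $f$ is the restriction of scalars along a quadratic extension $k'/k$ of a character $G_L\to k'^\times$ (for instance image cyclic of order $3$ with $k=\F_2$), and $G_L$ need not have any index-$2$ subgroup from which $f$ is induced; that case is handled by lifting the $k'^\times$-valued character, not by induction. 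Finally, for a general $R$ with $p^2R=0$ the kernel of $R\to k$ need not be nilpotent, so your "Hensel-type" lifting of the prime-to-$p$ part of characters to $R^\times$ requires justification; the paper's construction avoids this entirely because its lift only uses the unit $1+p$ and additive elements of $R$.
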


    \begin{proof}     
     We may restrict  by Lemma  \ref{trivial} to  the case  where $L$ has characteristic different from $p=\chr(k)$. The obstruction to lifting is a class
     $\alpha \in H^2(G_L,\Ad$, with $G_L$ acting upon $\Ad=M_2(k)$ by composing $f$ with the adjoint representation of $\GL_2(k)$.  As the restriction map $ H^2(G_L,\Ad) \to H^2(G_M,\Ad)$  is injective, for $M$ a finite extension of $L$ of degree prime to $p$,   we may assume:
     
     \begin{itemize}
     
     \item $L$ contains a  primitive $p$th root of unity, as $(p,[L(\mu_p):L])=1$; 
     \item  the image of $f$ is a  non-trivial  $p$-subgroup of $\GL_2(k)$, of type $(p,\ldots,p)$ as the Sylow $p$-subgroup of $\GL_2(k)$ is of type $(p,\ldots,p)$, and contained in the unipotent matrices  
     \[\Bigl\{   \left( \begin{array}{cc} 1 & \ast \\ 0 & 1 \end{array} \right)\Bigm|* \in k\Bigr\}\] of $\GL_2(k)$. 
     \end{itemize}
     
     By Kummer theory the splitting field of $f$ is given by $L_1=L(x_i^{1 \over p})$ for some $x_i$, $i=1,\ldots, r$ that are independent  in $L^\times/{(L^\times)}^p$.  We construct the extension $L_2=L(\mu_{p^2}, x_i^{1 \over p^2})$ and a homomorphism 
     $$g: \gal(L(\mu_{p^2}, x_i^{1 \over p^2} )/L) \to  N= \Bigl\{ \left( \begin{array}{cc} \alpha& \beta \\ 0 & 1 \end{array} \right)\Bigm|\alpha \in  \langle1+p \rangle, \beta \in R\Bigr\} \leq \GL_2(R),$$  such
     that  for $\sigma \in  \gal(L(\mu_{p^2}, x_i^{1 \over p^2} )/L)$,  $g(\sigma)=  \left( \begin{array}{cc} \epsilon(\sigma) & \ast \\ 0 & 1 \end{array} \right),$ with $\epsilon:G_L \to {(\Z/p^2\Z)}^*$   the 
     mod $p^2$ cyclotomic character, that lifts $f$.  
     
     For this note that $\gal(L(\mu_{p^2}, x_i^{1 \over p^2} )/L)$ is isomorphic to  a subgroup of either the semidirect product of $ (\Z/p^2\Z)^r$ by the order $p$ subgroup of  ${(\Z/p^2\Z)}^*$ with the  diagonal action   if $\mu_{p^2} \not \subset L$,  or $ (\Z/p^2\Z)^r$ otherwise.
     This allows us to construct $g: G_L \to N$ which reduces mod $p$  to $f$.

            \end{proof}
    
        \begin{rem} $ $
        
        \begin{itemize}

     %  \item 
        %The proof works to give the more general result  as Serre has pointed out: Let  $F $ be a field of characteristic $p$, and let $ R \rightarrow F$ be a surjective morphism of commutative rings such that $ p^2R = 0$. Then the projection  $\GL_2(R) \rightarrow \GL_2(F)$ has the lifting property that a continuous homomorphism   $f:G_L \rightarrow \GL_2(F)$ lifts to  a  continuous homomorphism  $g: G_L \rightarrow \GL_2(R)$. 

       \item  It's not clear how to  construct $g$ given $f$. When $f$ is surjective, the d\'evissage used in the proof constructs explicitly a mod $p^2$  reducible lift  $g'$, coming from Kummer theory,  of the restriction of $f$ to  the subgroup  $G_K=f^{-1}(P)$ of $G_L$ where  $P$  is a Sylow  $p$-subgroup of ${\rm im}(f)$.  The restriction of the lift  $g$ to $G_K$  is typically not $g'$: for instance  this is not the case when   when $k=\F_p$ and $p>3$ and the image of $f$ contains $\SL_2(\Z/p\Z)$, as  the image of $g$  then contains $\SL_2(\Z/p^2\Z)$.

       \item  In the case when $L$ is a global field, any  lift to $\GL_2(W_2(k))$ might be forced to be ramified at more places than the residual representation. Thus  it seems unlikely that Galois groups of maximal extensions of $L$ unramified outside a fixed finite set of places are liftable.
 \end{itemize}        
        \end{rem}

        \begin{cor} Let $k$ be a finite field.
 The exact sequence $$0 \rightarrow M_2(k) \rightarrow \GL_2(W_2(k)) \rightarrow  \GL_2(k)  \rightarrow 0$$ gives rise to a class $\alpha_{\GL_2(k)} \in H^2(\GL_2(k),M_2(k))$, with the action of $\GL_2(k) $ on $M_2(k)$ given by conjugation. Furthermore  $\alpha_{\GL_2(k)} \neq 0$ for $|k|>3$, but $f^*(\alpha_{\GL_2(k)})=0$  in $H^2(G_L,M_2(k))$ for any field $L$ and any homomorphism   $f:G_L \rightarrow \GL_2(k)$.  Thus $\alpha_{\GL_2(k)}$ is {\it negligible}.

\end{cor}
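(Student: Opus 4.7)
The proof breaks into three pieces. For the construction of $\alpha_{\GL_2(k)}$, I observe that the kernel of the reduction $\GL_2(W_2(k))\twoheadrightarrow \GL_2(k)$ is the abelian normal subgroup $I + pM_2(W_2(k))$, naturally isomorphic to $M_2(k)=\Ad$ via $I+pX\mapsto \bar X$; because $p^2=0$ in $W_2(k)$, the conjugation action factors through $\GL_2(k)$ and coincides with the adjoint action. Hence the extension defines a class $\alpha_{\GL_2(k)}\in H^2(\GL_2(k),\Ad)$ in the standard way.

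Negligibility is immediate from Proposition~\ref{lifting n=2}. For any field $L$ and continuous $f\colon G_L\to \GL_2(k)$, the class $f^*(\alpha_{\GL_2(k)})\in H^2(G_L,\Ad)$ is exactly the obstruction to lifting $f$ to $\GL_2(W_2(k))$; Proposition~\ref{lifting n=2} (applied with $R=W_2(k)$) produces such a lift, so the pullback vanishes.

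The main point, and the only genuine obstacle, is non-vanishing of $\alpha_{\GL_2(k)}$ itself for $|k|>3$. Applying the obstruction principle to $f=\id_{\GL_2(k)}$ one sees that $\alpha_{\GL_2(k)}=0$ if and only if the extension splits, equivalently iff $\id_{\GL_2(k)}$ lifts to a homomorphism $s\colon \GL_2(k)\to \GL_2(W_2(k))$. Suppose such an $s$ exists. Since $|k|\geq 4$ the group $\SL_2(k)$ is perfect, so the composition $\SL_2(k)\xrightarrow{s}\GL_2(W_2(k))\xrightarrow{\det}W_2(k)^\times$ has abelian image and must be trivial; thus $s$ restricts to a homomorphism $\SL_2(k)\to \SL_2(W_2(k))$ lifting the identity on $\SL_2(k)$. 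This contradicts the theorem of Vasiu \cite[Theorem~1.3]{vasiu-surj} invoked in the proof of Corollary~\ref{rigid finite Lie groups}, which asserts that for $n=2$ no such lift exists once $|k|>3$.
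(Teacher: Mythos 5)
Your proof is correct and follows exactly the route the paper intends: negligibility is the content of Proposition~\ref{lifting n=2} read through the obstruction-class interpretation, and non-vanishing for $|k|>3$ comes from the non-liftability of $\SL_2(k)$ to $\GL_2(W_2(k))$, which is the same appeal to \cite[Theorem~1.3]{vasiu-surj} (together with perfectness of $\SL_2(k)$ and the determinant argument) that the paper spells out later in the proof of Corollary~\ref{rigid finite Lie groups}. Nothing to add.
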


This naturally leads to the question which is a reformulation of Question \ref{galois}:

\begin{quest}\label{negligible}
 Let $k$ be a finite field.
 The exact sequence $$0 \rightarrow M_n(k) \rightarrow \GL_n(W_2(k)) \rightarrow  \GL_n(k)  \rightarrow 0$$ gives rise to a class $\alpha_{\GL_n(k)} \in H^2(\GL_n(k),M_n(k))$, with the action of $\GL_n(k) $ on $M_n(k)$ given by conjugation. Is  $\alpha_{\GL_n(k)}$  {\it negligible} in Galois cohomology for all $n \geq 1$?
\end{quest}

We may even ask if every representation $f:G_L \to \GL_n(k)$ lifts to $\GL_n(W(k))$. The $n=1$ case is trivial; we do  not know the answer  for  $n\geq 2$. We can produce characteristic 0 liftings
 for  2-dimensional reducible  mod $ p$ representation by the same method as in the proof of Proposition  \ref{lifting n=2}.
         
        \begin{prop}\label{reducible n=2}
        If $L$ is a field and $G_L$ its absolute Galois group,  for any  field $k$ of characteristic $p$, and any  $W(k)$-algebra $R$ with a surjective map $R \to k$,  a continuous reducible representation 
    $f:G_L \rightarrow \GL_2(k)$ lifts to  a  continuous representation $g: G_L \rightarrow \GL_2(R)$.
\end{prop}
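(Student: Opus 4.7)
\emph{Proof plan.} The plan is to imitate Proposition~\ref{lifting n=2}, splitting the lift into character parts (handled by Teichm\"uller) and an off-diagonal cocycle (handled by Kummer theory), and taking compatible $p^n$-th roots for every $n$ so as to reach an arbitrary $W(k)$-algebra $R$ rather than only one killed by $p^2$.

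If $\chr L = p$, Lemma~\ref{trivial} makes the obstructions to lifting mod $J^n$ to mod $J^{n+1}$ vanish (where $J:=\ker(R\to k)$), yielding a continuous lift of $f$ to $R$ by inverse limit (after passing to the $J$-adic completion of $R$ if necessary). Assume henceforth $\chr L\neq p$ and, after conjugation,
\[
f(\sigma)=\begin{pmatrix}\chi_1(\sigma)&c(\sigma)\\0&\chi_2(\sigma)\end{pmatrix},\qquad \psi:=\chi_1\chi_2^{-1},\ c\in Z^1(G_L,k(\psi)).
\]
A preliminary prime-to-$p$ base change (harmless, since restriction on $H^2$ with $p$-torsion coefficients is injective) brings us to the case $\mu_p\subset L$.

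The plan is to construct a reducible lift $g=\begin{pmatrix}\tilde\chi_1&\tilde c\\0&\tilde\chi_2\end{pmatrix}$. For the diagonal, set $\tilde\chi_2:=[\chi_2]$ (Teichm\"uller) and $\tilde\chi_1:=[\chi_1]\cdot\epsilon$, where $\epsilon\colon G_L\to\Z_p^\times\hookrightarrow R^\times$ is the $p$-adic cyclotomic character; since $\mu_p\subset L$, $\epsilon\equiv 1\pmod J$, so $\tilde\chi_i$ lifts $\chi_i$ and $\tilde\chi_1\tilde\chi_2^{-1}=[\psi]\epsilon$. The twist by $\epsilon$ is precisely what allows Kummer cocycles---which naturally carry a cyclotomic twist---to serve as lifts of the off-diagonal entry. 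Over a further prime-to-$p$ base change to $M:=L^{\ker\psi}$, the restriction $c|_{G_M}$ is a genuine homomorphism $G_M\to k$, identified by Kummer theory with elements $y_1,\ldots,y_r\in M^\times/(M^\times)^p$; the compatible systems $(\sqrt[p^n]{y_i})$ and $(\zeta_{p^n})$ produce, in the limit, a cocycle $\tilde c_M\in Z^1(G_M,R(\epsilon))$ (with values landing in $R$ via $\Z_p\hookrightarrow R$) that reduces to $c|_{G_M}$ modulo $J$. By Hochschild--Serre and the invertibility of $|\gal(M/L)|$ in $R$,
\[
H^1(G_L,R([\psi]\epsilon))\xrightarrow{\sim}H^1(G_M,R(\epsilon))^{\gal(M/L)}
\]
is an isomorphism, and descent provides the required $\tilde c$ over $L$; after a coboundary adjustment so that $\tilde c\equiv c\pmod J$ holds pointwise, $g$ lifts $f$.

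The main obstacle is arranging $\gal(M/L)$-equivariance of $\tilde c_M$. Since the class $[c|_{G_M}]$ is $\gal(M/L)$-invariant (being pulled back from $L$), equivariance is achieved by averaging any preliminary Kummer lift over the prime-to-$p$ group $\gal(M/L)$, using that $|\gal(M/L)|$ is invertible in $R$; the averages at successive $p^n$-levels are compatible by construction, producing the required equivariant cocycle, whose mod-$J$ reduction still represents $[c|_{G_M}]$.
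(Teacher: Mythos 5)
Your route is in substance the paper's own: the printed proof of Proposition~\ref{reducible n=2} is just a pointer to \cite{K-JNT} (Theorem~2) and the principle that $H^1(L,\mu_{p^n})\to H^1(L,\mu_{p^m})$ is surjective, and your construction (Teichm\"uller lifts on the diagonal, a cyclotomic twist so the off-diagonal entry becomes a Kummer class liftable through these surjections, prime-to-$p$ descent by averaging) is exactly an implementation of that mechanism. But two of your justifications do not hold as stated. The reduction to $\mu_p\subset L$ ``since restriction on $H^2$ with $p$-torsion coefficients is injective'' is an obstruction-class argument valid for a single square-zero extension (as in Proposition~\ref{lifting n=2}, where the target is killed by $p^2$); here $R$ may be $W(k)$ or any $W(k)$-algebra, the lifting problem is not governed by one class in $H^2$, and iterating the argument fails because the lift you build over $L(\mu_p)$ at a given stage need not restrict from the partial lift already chosen over $L$. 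The repair stays inside your own framework: do not assume $\mu_p\subset L$, but set $\tilde\chi_1=[\chi_1\bar\epsilon^{-1}]\epsilon$ (so $\tilde\psi=[\psi\bar\epsilon^{-1}]\epsilon$ still reduces to $\psi$), take $M$ to be the prime-to-$p$ extension trivializing $\psi\bar\epsilon^{-1}$; over $M$ the residual off-diagonal module is $k\otimes_{\F_p}\mu_p$, so Kummer theory applies verbatim, and your averaging/descent step (which is the correct way to handle prime-to-$p$ base change in a construction-based proof) brings the class back to $L$.

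Second, in characteristic $p$ your stage-by-stage argument only produces a lift over the $J$-adic completion of $R$, and there is no map from the completion back to $R$, so this does not prove the statement as formulated; better to use Lemma~\ref{trivial} (i.e.\ vanishing of $H^2$ with finite $p$-torsion coefficients) to lift the off-diagonal class compatibly through $H^1(G_L,W_n(k)([\psi]))$ for all $n$, obtaining a lift to $\GL_2(W(k))$, and then push forward along the structure map $W(k)\to R$. The same remark in the characteristic-$\neq p$ case (construct the lift with values in $W(k)$ and map to $R$ at the end) also disposes of any worry about what continuity means for an arbitrary $R$. Remaining slips are cosmetic: the literal matrix entry $c$ is a cocycle for $k(\psi)$ only after dividing by $\chi_2$, and $\Z_p\to R$, $W(k)\to R$ are structure maps, not necessarily injections.
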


\begin{proof}
The basic principle that the proof relies on is the surjectivity of the maps
$H^1(L,\mu_{p^n}) \to H^1(L,\mu_{p^m})$ for all positive integers $1 \leq m \leq n$, which follows from Kummer theory. We  refer to          \cite[Theorem 2, pg. 393]{K-JNT} and its proof for further details.

\end{proof}

\section{Cup products over local and global fields}\label{cupping}

We address Question \ref{quest:cup-product} in the number theoretic case of number fields and local non-archimedean fields which are their completions. Proposition \ref{solve local} and Theorem \ref{key} answer it in the case of local and global fields. For us  this   is an essential ingredient for the results in \S \ref{Heisenberg}  about  lifting mod $p$ Heisenberg  representations to  twisted mod $p^2$ Heisenberg representations over number fields and non-archimedean local fields of characteristic zero. 

\subsection{General fields}

The exact sequence
$$ 0 \to \mu_p^{\otimes n} \stackrel{i}{\to}  \mu_{p^2}^{\otimes n} \stackrel{\pi}{\to}  \mu_p^{\otimes n} \to 0,$$
gives rise to the  (twisted) Bockstein long exact sequence:
\begin{equation}
\label{Bockstein}
\ldots \to H^k(L,\mu_p^{\otimes n} ) \stackrel{i_{k,n}}{\longrightarrow}  H^k(L, \mu_{p^2}^{\otimes n})   \stackrel{\pi_{k,n}}{\longrightarrow} H^k(L,\mu_p^{\otimes n}) \stackrel{d_{k,n}}{\longrightarrow} \dots,
\end{equation}
where  the subscripts of $i$, $\pi$, and $d$ will usually be suppressed when they are clear from context.  We denote by $\delta\colon L^\times \to H^1(L,\mu_p)$ the composition of the quotient map
$L^\times \to L^\times/{L^\times}^p$ and the Kummer isomorphism $L^\times/{L^\times}^p\to H^1(L,\mu_p)$.

Let $p$ be a fixed prime.  

\begin{lem}
\label{kummer surjection}
If $L$ is any field, the map $\pi\colon H^1(L,\mu_{p^2})\to H^1(L,\mu_p)$ is surjective, and its image is $pH^1(L,\mu_{p^2})$.
\end{lem}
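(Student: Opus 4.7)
The plan is to reduce everything to Kummer theory after disposing of the characteristic $p$ case. If $\chr(L)=p$, then $\mu_{p^n}$ is the trivial Galois module $\{1\}$ (since $x^{p^n}-1=(x-1)^{p^n}$), so both $H^1$ groups vanish and there is nothing to prove. I therefore assume $\chr(L)\neq p$ from here on.

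Under this assumption, Kummer theory provides canonical isomorphisms $H^1(L,\mu_{p^n})\cong L^\times/(L^\times)^{p^n}$, sending the class of $a\in L^\times$ to the cocycle $\sigma\mapsto\sigma(a^{1/p^n})/a^{1/p^n}$. I would next unwind the maps $i$ and $\pi$ on these representatives. Since the surjection $\mu_{p^2}\to\mu_p$ in the defining sequence is $\zeta\mapsto\zeta^p$, a direct calculation on cocycles shows that $\pi$ is the obvious projection $L^\times/(L^\times)^{p^2}\twoheadrightarrow L^\times/(L^\times)^p$, while $i$ corresponds to the $p$-th power map $a\mapsto a^p$: the $\mu_p$-valued cocycle $\sigma\mapsto\sigma(a^{1/p})/a^{1/p}$, viewed in $\mu_{p^2}$, coincides with the Kummer cocycle attached to $a^p$ at level $p^2$, since $(a^p)^{1/p^2}=a^{1/p}$. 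Surjectivity of $\pi$ then follows at once from the surjectivity of $L^\times\to L^\times/(L^\times)^p$.

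For the identification of the image of $i$ (equivalently, by exactness, the kernel of $\pi$) with $pH^1(L,\mu_{p^2})$, the key observation is that the composite $i\circ\pi$ is induced by the endomorphism $\zeta\mapsto\zeta^p$ of $\mu_{p^2}$, and therefore coincides with multiplication by $p$ on $H^1(L,\mu_{p^2})$. The inclusion $\im(i)\supseteq pH^1(L,\mu_{p^2})$ is immediate from this. Conversely, any $y=i(x)$ with $x\in H^1(L,\mu_p)$ equals $i(\pi(z))=pz$ for a preimage $z$ of $x$ under $\pi$, which exists by the surjectivity just proved; hence $\im(i)\subseteq pH^1(L,\mu_{p^2})$.

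There is no real obstacle here; the lemma is a formal warm-up combining the Bockstein sequence with Kummer theory. The only subtlety worth flagging is the concrete description of $i$ as the $p$-th power map on Kummer representatives, which runs opposite to what the ambient inclusion $\mu_p\hookrightarrow\mu_{p^2}$ might first suggest.
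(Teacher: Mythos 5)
Your proof is correct and follows essentially the same route as the paper: it identifies $\pi$ and $i$ via Kummer theory with the reduction map $L^\times/(L^\times)^{p^2}\to L^\times/(L^\times)^p$ and the $p$-th power map $a\mapsto a^p$, from which surjectivity and the identification of $\ker\pi=\operatorname{im}(i)$ with $pH^1(L,\mu_{p^2})$ follow. Your explicit treatment of characteristic $p$ (where both groups vanish) and your formal use of $i\circ\pi=p$ in place of the paper's direct computation of the kernel as $(L^\times)^p/(L^\times)^{p^2}$ are only minor variations on the same argument.
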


\begin{proof}
Via Kummer theory, $i_{1,1}$ and $\pi_{1,1}$ can be identified with the $p$th power map $L^\times/{L^\times}^p\to L^\times/{L^\times}^{p^2}$ and the reduction map $L^\times/{L^\times}^{p^2} \to L^\times/{L^\times}^p$.
The latter is obviously surjective with kernel ${L^\times}^p/{L^\times}^{p^2} = pH^1(L,\mu_{p^2})$.
\end{proof}

We say that a field $L$ \emph{has Property D} if for all $y_1,y_2\in H^1(L,\mu_{p^2})$ such that $\pi(y_1)\cup \pi(y_2)=0$
but $\pi(y_1)$ and $\pi(y_2)$ are not both zero, there exist $z_1,z_2\in H^1(L,\mu_p)$ such that
\begin{equation}
\label{D}
i(\pi(y_1)\cup z_1+\pi(y_2\cup z_2)) = y_1\cup y_2.
\end{equation}

\begin{lem}
\label{deform}
If $L$ has Property D and $x_1,x_2\in H^1(L,\mu_p)$ satisfy $x_1\cup x_2=0$, then there exist $\tilde x_1,\tilde x_2\in H^1(L,\mu_{p^2})$ such that $\pi(\tilde x_i) = x_i$ for $i=1,2$ and
$$\tilde x_1\cup \tilde x_2=0.$$
\end{lem}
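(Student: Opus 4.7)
The plan is to start from arbitrary lifts $y_1, y_2 \in H^1(L, \mu_{p^2})$ of $x_1, x_2$ produced by Lemma~\ref{kummer surjection}, and then correct them by classes in the image of $i$. Compatibility of the cup product with $\pi$ gives $\pi(y_1 \cup y_2) = x_1 \cup x_2 = 0$, so by exactness of the Bockstein sequence \eqref{Bockstein} the class $y_1 \cup y_2$ lies in the image of $i_{2,2}$. The trivial case $x_1 = x_2 = 0$ is dealt with by taking $\tilde x_1 = \tilde x_2 = 0$; otherwise the hypotheses of Property~D are satisfied, so it produces $z_1, z_2 \in H^1(L, \mu_p)$ with
\[
y_1 \cup y_2 \;=\; i(x_1 \cup z_1 + x_2 \cup z_2).
\]

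Given such $z_1, z_2$, my proposed lifts are
\[
\tilde x_1 \;:=\; y_1 + i(z_2), \qquad \tilde x_2 \;:=\; y_2 - i(z_1),
\]
both of which reduce to the correct classes since $\pi \circ i = 0$. To verify $\tilde x_1 \cup \tilde x_2 = 0$ I would expand the cup product and apply three elementary compatibilities of the Bockstein: (a) $i(u) \cup i(v) = 0$ in $H^2(L, \mu_{p^2}^{\otimes 2})$, because at the cochain level such a class is divisible by $p^2$, which is $0$ in $\mu_{p^2}$; (b) the projection formulas $y \cup i(u) = i(\pi(y) \cup u)$ and $i(u) \cup y = i(u \cup \pi(y))$ for $y \in H^1(L, \mu_{p^2})$ and $u \in H^1(L, \mu_p)$; and (c) graded commutativity of the cup product in degree one, giving $z_2 \cup x_2 = -\,x_2 \cup z_2$ in $H^2(L, \mu_p^{\otimes 2})$. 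Combining these collapses the expansion to
\[
\tilde x_1 \cup \tilde x_2 \;=\; y_1 \cup y_2 \;-\; i(x_1 \cup z_1) \;+\; i(z_2 \cup x_2) \;=\; y_1 \cup y_2 \;-\; i(x_1 \cup z_1 + x_2 \cup z_2),
\]
which vanishes by the Property~D identity.

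There is no serious obstacle: the argument is entirely formal once Lemma~\ref{kummer surjection} and Property~D are in hand. The only delicate bookkeeping is the sign choice in the corrections — pairing $+i(z_2)$ with $y_1$ and $-i(z_1)$ with $y_2$, rather than the reverse — which is exactly what graded commutativity forces in order to cancel the Property~D expression instead of doubling it.
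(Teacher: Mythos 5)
Your proposal is correct and follows essentially the same route as the paper: lift via Lemma~\ref{kummer surjection}, invoke Property~D (after disposing of the case $x_1=x_2=0$), and correct by $\tilde x_1 = y_1 + i(z_2)$, $\tilde x_2 = y_2 - i(z_1)$, using the vanishing of $i(z_1)\cup i(z_2)$, the projection formula, and anticommutativity exactly as in the paper's equations \eqref{p squared} and \eqref{projection}. The only cosmetic difference is that you cite the projection formula as a standard Bockstein compatibility, whereas the paper derives it in one line from the surjectivity of $\pi$ in degree one.
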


\begin{proof}
If $x_1=x_2=0$, we are done, so we assume at least one $x_i$ is non-zero.
By Lemma~\ref{kummer surjection}, there exist $y_1,y_2\in H^1(L,\mu_{p^2})$ mapping to $x_1$ and $x_2$ respectively.
Thus 
$$\pi(y_1\cup y_2) = \pi(y_1)\cup \pi(y_2) = x_1\cup x_2 = 0.$$
By Property D, there exist classes $z_1,z_2$ satisfying \eqref{D}.  For all $z\in H^1(L,\mu_p)$ there exists $w\in H^1(L,\mu_{p^2})$ such that $\pi(w) = z$, and it follows that for all $y\in H^1(L\mu_{p^2})$,
\begin{equation}
\label{projection}
y\cup i(z) = y\cup i(\pi(w)) = y\cup pw = p(y \cup w) = i(\pi(y\cup w)) = i(\pi(y)\cup z).
\end{equation}
By Lemma~\ref{kummer surjection}, 
\begin{equation}
\label{p squared}
i(z_1)\cup i(z_2) \in p^2 H^2(L,\mu_p^{\otimes 2}) = 0.
\end{equation}
Therefore,
defining $\tilde x_1 := y_1 + i(z_2)$ and $\tilde x_2:= y_2 - i(z_1)$, we have $\pi(\tilde x_i) = \pi(y_i) = x_i$ and
\begin{align*}
\tilde x_1 \cup \tilde x_2 &= y_1\cup y_2 + i(z_2)\cup y_2 - y_1\cup i(z_1) - i(z_2)\cup i(z_1) \\
                                       &= y_1\cup y_2  - y_2 \cup i(z_2) - y_1 \cup i(z_1) \\
                                       &= y_1\cup y_2  - i(\pi(y_1)\cup z_1 + \pi(y_2)\cup z_2) = 0
\end{align*}
by \eqref{p squared}, \eqref{projection}, and \eqref{D}.
\end{proof}

We are grateful to G.~B\"ockle for pointing out part (4) in the following lemma to us.

\begin{lem}\label{div}
 
Let $L$  be a number field or a non-archimedean local field of characteristic zero, and let $p>2$ be prime. Then:

\begin{enumerate}

 \item  We have an exact sequence  $  H^2(L,\mu_p^{\otimes 2}) \to H^2(L,\mu_{p^2}^{\otimes 2}) \to H^2(L,\mu_p^{\otimes 2}) \to 0$;
in particular $\pi\colon H^2(L,\mu_{p^2}^{\otimes 2} ) \to H^2(L,\mu_p^{\otimes 2})$   is surjective.  If $\mu_{p^2} \subset L$
 then we have an exact sequence  $0 \to   H^2(L,\mu_p^{\otimes 2}) \to H^2(L,\mu_{p^2}^{\otimes 2} ) \to H^2(L,\mu_p^{\otimes 2}) \to 0$.

 \item The  kernel of the $\pi$-map  $H^2(L,\mu_{p^2}^{\otimes 2} ) \to H^2(L,\mu_p^{\otimes 2})$ is $p H^2(L,\mu_{p^2}^{\otimes 2})$.

 \item  When $L$ is local and $\mu_p \subset L$,  the cup product map   \[\cup:  H^1(L,\mu_p) \times H^1(L,\mu_p) \to  H^2(L,\mu_p^{\otimes 2 }  )\] 
 is identified with Tate's perfect duality pairing
 \[H^1(L,\mu_p) \times H^1(L,\mu_p) \to H^2(L,\mu_p)=\Z/p\Z.\]

\item  When $L$ is local and $\mu_p\subset L$, then  $i\colon H^2(L,\mu_p^{\otimes 2}) \to H^2(L,\mu_{p^2}^{\otimes 2})$ is injective if and only if $\mu_{p^2}\subset L$; otherwise it is the zero map.

\end{enumerate}

\end{lem}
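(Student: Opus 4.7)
My approach proceeds through the four parts in order, with Parts (1) and (2) sharing a common engine and Part (4) splitting into two cases. The main tool for Parts (1) and (2) is the Bockstein long exact sequence~\eqref{Bockstein} with $n=2$, combined with the vanishing $H^3(L, \mu_p^{\otimes 2}) = 0$. This vanishing follows from $\mathrm{cd}_p(G_L) \le 2$ for $p > 2$, which holds both for non-archimedean local fields of characteristic zero and for number fields (by a theorem of Tate, using $p$ odd). The surjection $\pi$ in Part (1) is then immediate. For the injectivity of $i$ when $\mu_{p^2} \subset L$, the Galois action on $\mu_{p^2}$ is trivial, so $\mu_{p^2}^{\otimes 2} \cong \mu_{p^2}$ as Galois modules and the relevant segment of the Bockstein sequence identifies with the one for $0 \to \mu_p \to \mu_{p^2} \to \mu_p \to 0$; Lemma~\ref{kummer surjection} makes $\pi$ surjective on $H^1$, so the connecting map $d\colon H^1(L, \mu_p^{\otimes 2}) \to H^2(L, \mu_p^{\otimes 2})$ vanishes and $i$ is injective. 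For Part (2), the multiplication-by-$p$ map on $\mu_{p^2}^{\otimes 2}$ factors as $\mu_{p^2}^{\otimes 2} \xrightarrow{\pi} \mu_p^{\otimes 2} \xrightarrow{i} \mu_{p^2}^{\otimes 2}$, yielding $p = i \circ \pi$ on $H^2$; combined with surjectivity of $\pi$ from Part (1) and the exactness $\ker \pi = \im\, i$, this gives $p H^2(L, \mu_{p^2}^{\otimes 2}) = \im\, i = \ker \pi$.

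Part (3) is an immediate specialization of local Tate duality. For the finite $G_L$-module $M = \mu_p$, its Tate dual is $M^* = \mathrm{Hom}(M, \mu_p) \cong \Z/p$, and the cup-product pairing $H^1(L, M) \times H^1(L, M^*) \to H^2(L, \mu_p) \cong \Z/p$ is perfect. When $\mu_p \subset L$, a choice of primitive $p$-th root of unity identifies $M^* \cong \mu_p$ and $\mu_p \otimes \mu_p \cong \mu_p$ compatibly, so the cup product $\cup\colon H^1(L, \mu_p) \times H^1(L, \mu_p) \to H^2(L, \mu_p^{\otimes 2})$ is identified with the Tate pairing.

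For Part (4), the case $\mu_{p^2} \subset L$ is precisely the injectivity statement already established in Part (1). For the remaining case ($L$ local, $\mu_p \subset L$, $\mu_{p^2} \not\subset L$) I use local Tate duality to compute both sides. On one hand, $H^2(L, \mu_p^{\otimes 2}) \cong H^0(L, \mu_p^{-1})^\vee \cong \Z/p$, since $\mu_p \subset L$ forces trivial action on $\mu_p^{-1}$. On the other hand, let $\chi\colon G_L \to (\Z/p^2)^\times$ be the mod $p^2$ cyclotomic character; the hypotheses force $\chi$ to factor through $1 + p\Z/p^2 \cong \Z/p$ with nontrivial, hence surjective, image. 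The Galois invariants of $\mu_{p^2}^{-1}$ consist of elements $x \in \Z/p^2$ with $(\chi(g)^{-1} - 1)x = 0$ for all $g$, and since the image of $\chi - 1$ is $p\Z/p^2$, these are exactly $p\Z/p^2 \cong \Z/p$. Hence $H^2(L, \mu_{p^2}^{\otimes 2}) \cong \Z/p$ as well. Since $\pi$ from Part (1) is then a surjection between two groups of order $p$, it is an isomorphism, and by exactness $\im\, i = \ker \pi = 0$, i.e., $i = 0$.

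The main obstacle is the computation of $H^0(L, \mu_{p^2}^{-1})$ in Part (4) under the hypothesis $\mu_p \subset L$, $\mu_{p^2} \not\subset L$: this is where the arithmetic of the cyclotomic character at $L$ genuinely enters. Parts (1)--(3) and the easy direction of Part (4) use only formal machinery (cohomological dimension, Kummer theory, local Tate duality), but ruling out a nonzero image of $i$ in the ramified case really requires the explicit duality-based calculation just described.
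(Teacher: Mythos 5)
Your proof is correct and takes essentially the same route as the paper: the Bockstein sequence \eqref{Bockstein} together with $\mathrm{cd}_p(G_L)\le 2$ for part (1), the identity $i\pi=p$ combined with surjectivity of $\pi$ for part (2), and local Tate duality for parts (3) and (4). The only (cosmetic) difference is in (4), where you count orders of the two $H^2$ groups via duality and the invariants computation $H^0(L,\mu_{p^2}^{\otimes -1})=p\Z/p^2\Z$ and then invoke exactness, while the paper dualizes $i$ itself and checks that the induced map $H^0(L,\mu_{p^2}^{\otimes -1})\to H^0(L,\mu_p^{\otimes -1})$ fails to be surjective --- the same calculation repackaged.
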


 \begin{proof}
     
The first  sentence of part (1) follows from \eqref{Bockstein}
and the fact that for the fields $L$ we consider the $p$-cohomological dimension  of $L$ is $\leq 2$. When $\mu_{p^2}\subset L$, we can identify $\mu_{p^2}$ and $\mu_{p^2}^{\otimes 2}$,
and the second sentence follows from
Lemma~\ref{kummer surjection}.
     
For part (2), we have the obvious  inclusion $pH^2(L,\mu_{p^2}^{\otimes 2} ) \subset \ker\pi$. As $i\pi$ is multiplication by $p$, the   surjectivity of $\pi\colon H^2(L,\mu_{p^2}^{\otimes 2} ) \to H^2(L,\mu_p^{\otimes 2})$
from part (1) implies that this inclusion is an equality.

Part (3) is standard.

For part (4), fixing a trivialization of $\mu_p$ gives an isomorphism from $H^2(L,\mu_p^{\otimes 2})$ to $\Br_p(L)$, which is canonically identified with $\Z/p\Z$,
so the map $i\colon H^2(L,\mu_p^{\otimes 2}) \to H^2(L,\mu_{p^2}^{\otimes 2})$
is either injective or zero.  
Injectivity  is equivalent to
the surjectivity of the dual map, which by local duality is
$$H^0(L,\mu_{p^2}^{\otimes -1}) \to H^0(L,\mu_p^{\otimes -1}).$$
As $\mu_p\subset L$, the right hand term is identified with $\Z/p\Z$ by our choice of generator of $\mu_p$.  The left hand term is 
identified with $\Z/p^2\Z$ or $p\Z/p^2\Z$ depending on whether $\mu_{p^2}$ is or is not contained in $L$,
which gives the claim.

    \end{proof}

\subsection{Local fields}

\begin{prop}\label{solve local}
Let $L$  be a non-archimedean local field of characteristic zero and  $p>2$ a prime such that  $\mu_p \subset L$.  Then  any  $x_1,x_2 \in H^1(L,\mu_p)$ with  $x_1 \cup x_2=0$ lift to elements $\tilde x_i \in H^1(L,\mu_{p^2})$ such that $\tilde x_1 \cup  \tilde x_2=0$.
  
\end{prop}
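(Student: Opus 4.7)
The plan is to verify that $L$ satisfies Property D, so that Lemma~\ref{deform} delivers the desired conclusion. Accordingly, fix $y_1, y_2 \in H^1(L, \mu_{p^2})$ with $x_1 \cup x_2 = 0$, where $x_i := \pi(y_i)$, and with $(x_1, x_2) \neq (0,0)$. The task is then to produce $z_1, z_2 \in H^1(L, \mu_p)$ satisfying \eqref{D}.

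First I would locate $y_1 \cup y_2$ using the Bockstein sequence: since $\pi(y_1 \cup y_2) = x_1 \cup x_2 = 0$, parts (1) and (2) of Lemma~\ref{div} place $y_1 \cup y_2$ in $p H^2(L, \mu_{p^2}^{\otimes 2}) = i(H^2(L, \mu_p^{\otimes 2}))$. At this point I would split on whether or not $\mu_{p^2} \subset L$, the two cases being pinned down by Lemma~\ref{div}(4).

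If $\mu_{p^2} \not\subset L$, Lemma~\ref{div}(4) asserts that $i$ is the zero map on $H^2$, hence $y_1 \cup y_2 = 0$ automatically and $z_1 = z_2 = 0$ satisfies \eqref{D} trivially. If instead $\mu_{p^2} \subset L$, the same lemma says $i$ is injective on $H^2$, so there is a unique $\alpha \in H^2(L, \mu_p^{\otimes 2})$ with $i(\alpha) = y_1 \cup y_2$, and it suffices to realize $\alpha$ as $x_1 \cup z_1 + x_2 \cup z_2$. Here Tate's perfect duality (Lemma~\ref{div}(3)) takes over: it makes the cup product $H^1(L, \mu_p) \times H^1(L, \mu_p) \to H^2(L, \mu_p^{\otimes 2}) \cong \mathbb{Z}/p\mathbb{Z}$ a perfect pairing, so whichever $x_i$ is nonzero---say $x_1 \neq 0$---the linear form $z \mapsto x_1 \cup z$ is surjective onto $\mathbb{Z}/p\mathbb{Z}$, and I can choose $z_1$ with $x_1 \cup z_1 = \alpha$ and take $z_2 = 0$.

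The main substance is entirely concentrated in Lemma~\ref{div}: it supplies the exactness of the twisted Bockstein, controls the kernel and image of $i$ on $H^2$ (in terms of whether $\mu_{p^2} \subset L$), and provides the perfect pairing on $H^1$. Given these ingredients, the only genuine concern is the case distinction above, which is the closest thing to an obstacle; once Property D has been verified, the conclusion of the proposition is immediate from Lemma~\ref{deform}.
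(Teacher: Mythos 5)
Your proof is correct and follows essentially the same route as the paper: verify Property D (so Lemma~\ref{deform} applies), use exactness of the Bockstein sequence to write $y_1\cup y_2 = i(t)$, and use the perfectness of the local Tate pairing from Lemma~\ref{div}(3) to solve $x_1\cup z_1 = t$ with $z_2=0$. The only difference is your case split on whether $\mu_{p^2}\subset L$, which the paper avoids: exactness of $H^2(L,\mu_p^{\otimes 2})\xrightarrow{i} H^2(L,\mu_{p^2}^{\otimes 2})\xrightarrow{\pi} H^2(L,\mu_p^{\otimes 2})$ already gives \emph{some} preimage $t$ of $y_1\cup y_2$ without needing injectivity or vanishing of $i$, so Lemma~\ref{div}(4) is not required --- though your extra analysis is harmless and correct.
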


\begin{proof}
By Lemma~\ref{deform}, it suffices to prove that $L$ has Property D.  Let $x_i = \pi(y_2)$.
Without loss of generality, we may assume $x_1\neq 0$.  We have $y_1\cup y_2 \in \ker \pi$,
so there exists $t\in H^2(L,\mu_p^{\otimes 2})$ such that $y_1\cup y_2 = i(t)$.  By Lemma~\ref{div} (3), there exists $z_1$ such that $x_1\cup z_1 = t$, and setting $z_2 = 0$,
equation \eqref{D} holds.
\end{proof}

\subsection{Number fields}

We start with a couple of preliminary results. The following lemma is used in Proposition \ref{fix t} and Theorem \ref{key}.

\begin{lem}
\label{local d}
Let $p>2$ be prime, and let $L$ be any local field of residue characteristic $\ell\neq p$ which contains $\mu_p$ but not $\mu_{p^2}$.
Then $d:=d_{1,2}\colon H^1(L,\mu_p^{\otimes 2})\to H^2(L,\mu_p^{\otimes 2})$ is surjective, and its kernel consists of all classes of the form $\delta a$, where 
$a$ is any element of $L$ of valuation $0$.
\end{lem}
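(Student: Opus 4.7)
My plan is to handle surjectivity and kernel identification separately, using the Bockstein long exact sequence throughout.

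Surjectivity is immediate from \eqref{Bockstein} combined with Lemma~\ref{div}(4). The relevant fragment reads
$$H^1(L,\mu_p^{\otimes 2}) \xrightarrow{d} H^2(L,\mu_p^{\otimes 2}) \xrightarrow{i} H^2(L,\mu_{p^2}^{\otimes 2}),$$
and Lemma~\ref{div}(4) tells us $i=0$ under the hypothesis $\mu_{p^2}\not\subset L$, forcing $d$ to be surjective.

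For the kernel, the plan is to show $d(\delta a) = \delta\zeta_p \cup \delta a \in H^2(L,\mu_p^{\otimes 2})$ and then apply the tame symbol formula. To prove the cup-product identification, write the mod-$p^2$ cyclotomic character as $\chi = 1+p\phi$ (valid since $\mu_p\subset L$), where $\phi = \delta\zeta_p : G_L\to\F_p$; then $\mu_{p^2}^{\otimes 2}$ carries the Galois action $\chi^2 \equiv 1+2p\phi\pmod{p^2}$. For $a\in L^\times$, take the Kummer cocycle $c_a(\sigma) = \sigma(a^{1/p})/a^{1/p}$ and lift it to the $\mu_{p^2}$-valued cochain $\tilde c_a(\sigma) = \sigma(a^{1/p^2})/a^{1/p^2}$, which satisfies the $\chi$-twisted relation $\tilde c_a(\sigma\tau) = \tilde c_a(\sigma)+\chi(\sigma)\tilde c_a(\tau)$. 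Reinterpreting $\tilde c_a$ as a cochain valued in $\mu_{p^2}^{\otimes 2}$ via the set-level identifications $\mu_{p^2}\cong\Z/p^2\Z\cong\mu_{p^2}^{\otimes 2}$, the coboundary with the $\chi^2$-action is
$$\partial\tilde c_a(\sigma,\tau) = \chi(\sigma)^2 \tilde c_a(\tau) - \tilde c_a(\sigma\tau) + \tilde c_a(\sigma) = \chi(\sigma)(\chi(\sigma)-1)\tilde c_a(\tau) \equiv p\phi(\sigma)\, c_a(\tau) \pmod{p^2},$$
so (after identifying $p\Z/p^2\Z$ with $\mu_p^{\otimes 2}$) $d(\delta a) = \phi\cup c_a = \delta\zeta_p\cup\delta a$, which by Lemma~\ref{div}(3) is the Tate/Hilbert pairing value. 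The tame formula evaluates this to $\bar\zeta_p^{v_L(a)(q-1)/p}\in\mu_p$; since $\mu_{p^2}\not\subset L$ is equivalent to $p$ exactly dividing $q-1$, the exponent $(q-1)/p$ is coprime to $p$, and so $d(\delta a)=0$ iff $p\mid v_L(a)$, which modulo $p$-th powers is the condition that $a$ be represented by a valuation-zero element.

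The main obstacle is the cocycle calculation identifying $d$ with cup product by $\delta\zeta_p$. What makes it come out cleanly is that a \emph{single} Kummer lift to $\mu_{p^2}$, when reinterpreted as a cochain valued in the $\chi^2$-twisted module $\mu_{p^2}^{\otimes 2}$, introduces a discrepancy of exactly $\chi(\chi-1)\equiv p\phi\pmod{p^2}$ into the coboundary --- this is precisely the factor that produces cup product with $\phi$ after reducing $p\Z/p^2\Z$ to $\F_p$. Once this identification is in hand, the tame symbol evaluation and the translation into valuation conditions are routine.
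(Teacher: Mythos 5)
Your proof is correct, and the surjectivity half is identical to the paper's (exactness of \eqref{Bockstein} plus Lemma~\ref{div}(4) giving $i=0$). For the kernel, however, you take a genuinely different route. The paper argues softly: since $\mu_p\subset L$ and $\ell\neq p$, Kummer theory and the structure of $L^\times$ give $\dim_{\F_p}H^1(L,\mu_p^{\otimes 2})=2$ while $H^2(L,\mu_p^{\otimes 2})\cong\Br_p(L)\cong\F_p$, so surjectivity forces $\ker d$ to be one-dimensional; the unit classes form a one-dimensional subspace, and they lie in $\ker d$ because a Kummer class of a unit is unramified, hence represented by a cocycle factoring through $\hat\Z$, which lifts to a $\mu_{p^2}^{\otimes 2}$-valued cocycle (so the class is in $\operatorname{im}\pi=\ker d$). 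You instead compute $d$ explicitly: the coboundary of the $p^2$-th root Kummer cochain in the $\chi^2$-twisted module is $\chi(\sigma)(\chi(\sigma)-1)\tilde c_a(\tau)\equiv p\,\phi(\sigma)c_a(\tau)$, identifying $d(\delta a)$ with $\delta\zeta_p\cup\delta a$ (up to the harmless normalization of $i$ and sign, which do not affect the kernel), and then you evaluate this symbol by the tame formula, using that $\mu_{p^2}\not\subset L$ means $p\,\|\,q-1$, so $(q-1)/p$ is prime to $p$ and the symbol vanishes exactly when $p\mid v(a)$, i.e.\ exactly on unit classes. Your computation checks out (the twisted cocycle relation for $\tilde c_a$ is what makes the discrepancy come out as $p\phi$ rather than $2p\phi$), and it buys more than the paper's argument: an explicit identification of the Bockstein as cup product with $\delta\zeta_p$ and a direct verification that ramified classes are \emph{not} in the kernel, at the cost of some bookkeeping with identifications; the paper's counting argument avoids all cocycle manipulation but yields only the equality of two one-dimensional spaces.
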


\begin{proof}
By Lemma~\ref{div} (4), $i=0$, so $d$ is surjective.  By Kummer theory, $H^1(L,\mu_p) \cong L^\times \otimes \F_p$.
We can write $L^\times$ as a product of $\Z$, the multiplicative group of the residue field $k$ of $L$, and a pro-$\ell$ group.
By hypothesis, $k^\times$ contains any element of order $p$, but by Hensel's lemma and the hypothesis $\mu_{p^2}\not\subset L$, it cannot contain an elements of order $p^2$.  Thus, $k^\times\otimes \F_p\cong \F_p$, and $L^\times\otimes \F_p\cong \F_p^2$.
As $H^2(L,\mu_p) \cong \Br_p(L)\cong \F_p$, the kernel of $i$ is $1$-dimensional.

The multiplicative group of valuation $0$ elements of $L$ maps to a $1$-dimensional subspace of $L^\times\otimes \F_p$.  The image of this subspace consists of classes represented by $1$-cocycles which factor through $\hat \Z$ and therefore lift to
cocycles with values in $\mu_{p^2}^{\otimes 2}$.  These classes therefore lie in $\ker d$.

\end{proof}

 \begin{prop}
 \label{fix t}
Let $L$ be a number field,  $p>2$ a prime, and  $\mu_p \subset L$.   Let  $x_1,x_2 \in H^1(L,\mu_p)$.
Suppose that $t\in H^2(L,\mu_p^{\otimes 2})$ has the property that $(x_1)_v=(x_2)_v=0$ implies $i(t)_v=0$.
Then there exists
 $t'\in H^2(L,\mu_p^{\otimes 2})$ such that $i(t') = i(t)$, and for all $v$ such that $(x_1)_v = (x_2)_v = 0$ 
 and $\mu_{p^2}\not\subset L_v$, we have $t'_v = 0$.
 \end{prop}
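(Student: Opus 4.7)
The plan is to produce a class $s\in H^1(L,\mu_p^{\otimes 2})$ and set $t':=t-d_{1,2}(s)$. Since $i\circ d_{1,2}=0$, the equality $i(t')=i(t)$ holds automatically, so the task reduces to arranging $d_{1,2}(s)_v=t_v$ at every place $v$ with $(x_1)_v=(x_2)_v=0$ and $\mu_{p^2}\not\subset L_v$.

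First, reduce to a finite local problem. Because $t$ is a global class, its localizations vanish at all but finitely many places, so the set $S$ of bad places $v$ with $\mu_{p^2}\not\subset L_v$ and $t_v\neq 0$ is finite. For each such $v$, Lemma~\ref{div}(4) gives $i_v=0$, hence exactness of the local Bockstein sequence makes $d_v\colon H^1(L_v,\mu_p^{\otimes 2})\to H^2(L_v,\mu_p^{\otimes 2})$ surjective; choose $\sigma_v\in H^1(L_v,\mu_p^{\otimes 2})$ with $d_v(\sigma_v)=t_v$. We will seek a global class $s$ whose localization at each $v\in S$ agrees with $\sigma_v$ modulo $\ker d_v$, and which is unramified at every other place (except possibly a single auxiliary place introduced below). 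At an unramified place $v'$, the localization $s_{v'}$ factors through $G_{v'}/I_{v'}\cong\hat\Z$, a group of cohomological dimension $1$, so $s_{v'}$ lifts to $H^1(L_{v'},\mu_{p^2}^{\otimes 2})$ and consequently $d_{v'}(s_{v'})=0$. In particular at any bad place $v'\notin S$ outside the auxiliary set, $d(s)_{v'}=0=t_{v'}$ as required.

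The heart of the argument is to establish the existence of such a global $s$, and this is the step I expect to be the main obstacle. It is a Selmer/Poitou--Tate problem: by the nine-term Poitou--Tate sequence, the cokernel of the localization map from $H^1$ of $\mu_p^{\otimes 2}$ with unramified conditions outside $S$ to $\bigoplus_{v\in S}H^1(L_v,\mu_p^{\otimes 2})/\ker d_v$ is dual to a piece of the Selmer group for the Kummer dual $\mu_p^{\otimes -1}$. The hypothesis that $i(t)_v=0$ whenever $(x_1)_v=(x_2)_v=0$ is exactly what one needs to force the associated reciprocity sums to vanish: at bad places with $\mu_{p^2}\subset L_v$ one has $t_v=0$ by injectivity of $i_v$, and the local pairing identified by Lemma~\ref{div}(3) combined with global reciprocity $\sum_v\mathrm{inv}_v=0$ controls the global obstruction. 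Any residual obstruction class in the dual Selmer group for $\mu_p^{\otimes -1}$ is killed by enlarging $S$ with one auxiliary place $w$ produced by the Chebotarev density theorem, chosen so that the obstruction has nonzero localization at $w$ (this is the standard Ramakrishna-style device). Since $w$ is not bad, allowing $s$ to be ramified at $w$ does not affect the conclusion. Putting these steps together yields $s$, and hence $t'=t-d_{1,2}(s)$, with the required properties.
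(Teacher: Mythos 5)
Your reduction is sound as far as it goes: writing $t'=t-d_{1,2}(s)$ makes $i(t')=i(t)$ automatic, and this is in effect what the paper does too (its modifications are $t\mapsto t+d\delta b$ for explicit global elements $b\in L^\times$, i.e. $s=\delta b$). But the entire content of the proposition is the existence of the global class $s$, and your Poitou--Tate/Ramakrishna paragraph does not supply it; it defers exactly the arithmetic that has to be done. Concretely: to kill the dual obstruction you must find auxiliary primes $w$ at which the obstruction class is locally nontrivial \emph{and} which are not bad. Note that a prime with $\mu_{p^2}\subset L_w$ is useless as an auxiliary prime (there $i_w$ is injective by Lemma~\ref{div}(4), so $d_w=0$ and ramifying $s$ at $w$ changes nothing), so the auxiliary primes must be ones where $x_1$ or $x_2$ is locally nontrivial. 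Producing primes satisfying both conditions simultaneously requires a Chebotarev/linear-disjointness argument using that $x_1$ or $x_2$ is nonzero globally (one must also rule out the possibility that the dual class is only detected at bad primes, and one may need several auxiliary primes, not one). This compatibility step is precisely the substance of the paper's proof, where one shows that the Frobenius classes of primes at which $x_1$ is a non-$p$th power have density $1-\tfrac1p\ge\tfrac23$ and hence generate the relevant ray class group, allowing an explicit $b$ with $(b)=\mathfrak{p}_1^{k_1}\mathfrak{p}_2^{k_2}\mathfrak{q}$ (or $\mathfrak{p}\prod\mathfrak{q}_j^{c_j}$) supported, apart from the targeted bad place, only at non-bad primes. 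Your assertion that the hypothesis ``$i(t)_v=0$ whenever $(x_1)_v=(x_2)_v=0$'' makes the reciprocity sums vanish is both unproved and not where the hypotheses actually do their work; what is really used is the global nonvanishing of $x_1$ or $x_2$ to supply the auxiliary primes.

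There is also a localized error: at bad places $v\mid p$ outside $S$ you need $d(s)_v=0$, and your argument ``$s_v$ unramified, $G_v/I_v\cong\hat\Z$ has cohomological dimension $1$, hence $s_v$ lifts'' fails there. The coefficient module $\mu_{p^2}^{\otimes 2}$ is ramified at such $v$ whenever $L_v(\mu_{p^2})/L_v$ is ramified, and then $(\mu_{p^2}^{\otimes 2})^{I_v}=i(\mu_p^{\otimes 2})$ maps to $0$ in $\mu_p^{\otimes 2}$, so an unramified mod $p$ class need not lift and need not lie in $\ker d_v$ (compare Lemma~\ref{local d}, which is stated only for residue characteristic $\ell\neq p$). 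The paper sidesteps this by taking its Kummer elements $b\equiv 1$ modulo a high power of $p$, so that $\delta b$ is locally \emph{trivial} (not merely unramified) at every $v\mid p$; in your setup you would have to add $s_v\in\ker d_v$ at these finitely many places as further local conditions, which feeds back into the same unestablished existence problem. So as written the proposal has a genuine gap at its central step, and the missing argument is essentially the ray-class-group/Chebotarev construction that the paper carries out by induction on the number of bad places in the support of $t$.
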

 
 \begin{proof}
 Let $\{u_1,\ldots,u_k\}$ denote the valuations of $L$ with $u_j(p)>0$ and $\mu_{p^2}\not\subset L_{u_j}$.
 By Lemma~~\ref{div} (4),  $d=d_{1,2}\colon H^1(L_{u_j},\mu_p) \to H^2(L_{u_j},\mu_p)$ is surjective.  By weak approximation, there exists $b\in L^\times$ such that $(d\delta b)_{u_j} = -t_{u_j}$ 
 for $j=1,\ldots,k$.  Replacing $t$ by $t + d\delta b$, we have $t_{u_j}=0$ and $i(t)$ is unchanged.
 We may therefore assume that $t_{u_j}=0$.
 
Let $\{v_1,\ldots,v_l\}$ denote the support of $t$.  
If there does not exist $j$ such that $(x_1)_{v_j} = (x_2)_{v_j}=0$,
then we are done, so without loss of generality, we assume that $j=1$ has this property.
We claim that there exist $w_1,\ldots,w_m$ with the following properties:

\begin{enumerate}
\item For each $j\in [1,m]$, either $(x_1)_{w_j}$ or $(x_2)_{w_j}$ is non-zero
\item There exists an element $b\in L^\times$ such that $d\delta b$ is supported exactly on $\{v_1,w_1,\ldots,w_m\}$.
\item The element $b$ in (2) can be taken to be congruent to $1$ modulo a high enough power of $p$ to guarantee that
$(\delta b)_w = 0$ for all $w$ such that $w(p)>0$.
\end{enumerate}

Assuming this is true, we can replace $t$ by $t+d\delta b^r$ for some $r\in \{1,2,\ldots,p-1\}$ such that
$t$ is supported on $\{v_2,\ldots,v_l,w_1,\ldots,w_m\}$. This gives an element of $H^2(L,\mu_p^{\otimes 2})$
whose support has a smaller set of valuations for which $x_1$ and $x_2$
both have zero component than $t$ does.  The proposition then follows by induction on the number of such valuations.

To prove the claim, let $\cO$ denote the ring of integers in $L$ and $\primep$ denote the prime ideal corresponding to $v_1$.
We are looking for an element $b\in\cO$ congruent to $1$ modulo a suitable power $p^k$, with the property that
$(b) = \primep \prod_j \primeq_j^{c_j}$, where each valuation $w_j$ corresponding to a $\primeq_j$ satisfies $w_j(p)=0$
and $w_j\not\in\{v_1,\ldots,v_l\}$; moreover, either $(x_1)_{w_j} \neq 0$ or $(x_2)_{w_j}\neq 0$ for each $w_j$.
This can be done provided that the prime ideals corresponding to valuations satisfying these conditions generate the
ray class group $H_{p^k}$ of fractional ideals prime to $p$ modulo principal fractional ideals with generators which are $1$ mod $p^k$.
However, if $x_1\neq 0$ then the Chebotarev density of primes $\primeq$ such that $x_1^{1/p}\in L_{\primeq}$
is $1-\frac 1p\ge \frac 23$, so the set of images of the corresponding Frobenius elements in $H_{p^k}$ contains at
least $\frac{2|H_{p^k}|}{3}$ elements and therefore generates $H_{p^k}$.
 
 \end{proof}

\begin{thm}\label{key}
Let $L$ be a number  field,  $p>2$ a prime, and  $\mu_p \subset L$.   Let  $x_1,x_2 \in H^1(L,\mu_p)$ satisfy  $x_1 \cup x_2=0$.
Then there exist elements $\tilde x_1,\tilde x_2\in H^1(L,\mu_{p^2})$ mapping by $\pi$ to $x_1$ and $x_2$ respectively such      
that $\tilde x_1\cup \tilde x_2 = 0$.
\end{thm}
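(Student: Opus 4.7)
The plan is to verify that the number field $L$ satisfies Property~D, after which the theorem is immediate from Lemma~\ref{deform}. Let $y_1,y_2\in H^1(L,\mu_{p^2})$ with $x_i=\pi(y_i)$ and $x_1\cup x_2=0$, the $x_i$ not both zero. Since $\pi(y_1\cup y_2)=x_1\cup x_2=0$, Lemma~\ref{div}(1) produces $t\in H^2(L,\mu_p^{\otimes 2})$ with $i(t)=y_1\cup y_2$. Property~D will follow once I find $z_1,z_2\in H^1(L,\mu_p)$ with $x_1\cup z_1+x_2\cup z_2=t'$ for some $t'$ satisfying $i(t')=i(t)$.

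The second step is to normalize $t$ using Proposition~\ref{fix t}. Its hypothesis is automatic in our setting: when $(x_1)_v=(x_2)_v=0$, we have $(y_j)_v\in\ker\pi = pH^1(L_v,\mu_{p^2})$ by Lemma~\ref{kummer surjection}, so $(y_1\cup y_2)_v\in p^2 H^2(L_v,\mu_{p^2}^{\otimes 2})=0$, whence $i(t)_v=0$. The proposition then supplies $t'$ with $i(t')=i(t)$ and $t'_v=0$ at every vanishing place with $\mu_{p^2}\not\subset L_v$. At the remaining vanishing places, where $\mu_{p^2}\subset L_v$, Lemma~\ref{div}(4) makes $i$ locally injective, and $i(t')_v=(y_1\cup y_2)_v=0$ forces $t'_v=0$ there as well. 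In particular $\supp(t')\subseteq S:=\{v:(x_1)_v\ne 0\text{ or }(x_2)_v\ne 0\}$.

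It then suffices to solve $x_1\cup z_1+x_2\cup z_2=t'$ globally. Locally the equation is tractable at every place: at each $v\in S$ at least one $(x_i)_v$ is nonzero, and the perfect Tate pairing of Lemma~\ref{div}(3) shows that cupping with $(x_i)_v$ sends $H^1(L_v,\mu_p)$ onto $H^2(L_v,\mu_p^{\otimes 2})$, so suitable local $(z_1)_v,(z_2)_v$ exist; at $v\notin S$ the zero choice works. No archimedean worries arise because $\mu_p\subset L$ with $p>2$ forces $L$ to be totally imaginary, so all archimedean completions are $\C$ and their $H^2$ vanishes.

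The principal obstacle is patching these local data into a single global pair $(z_1,z_2)\in H^1(L,\mu_p)^2$, and this is where the real number-theoretic work lies. My plan is to imitate the Chebotarev-density argument of Proposition~\ref{fix t}: via Kummer theory $H^1(L,\mu_p)\cong L^\times/(L^\times)^p$ the equation $x_1\cup z_1+x_2\cup z_2=t'$ becomes a statement about Hilbert symbols on $L^\times$, and the Hasse principle (vanishing of the kernel of the localization map $H^2(L,\mu_p^{\otimes 2})\to\bigoplus_v H^2(L_v,\mu_p^{\otimes 2})$) together with the automatic reciprocity $\sum_v\inv_v(t')=0$ reduces global recognition of $t'$ to its list of local invariants. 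One then chooses $a_1,a_2\in L^\times$ with prescribed images in $L_v^\times/(L_v^\times)^p$ at the finitely many places in $S$ and above $p$, and invokes Chebotarev density to absorb the remaining valuation content of $a_i$ at auxiliary primes at which $x_1$ or $x_2$ is locally trivial, so that no spurious Hilbert symbol appears. Coordinating these Chebotarev choices for $z_1$ and $z_2$ simultaneously against the given $t'$ — ensuring their aggregate local behaviour matches $t'$ at every place, without clashing at the bad primes above $p$ — is the most delicate point of the proof.
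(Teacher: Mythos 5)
You set up the reduction exactly as the paper does: Property~D via Lemma~\ref{deform}, the choice of $t$ with $i(t)=y_1\cup y_2$, and the normalization of $t$ by Proposition~\ref{fix t}; your observation that local injectivity of $i$ at places with $\mu_{p^2}\subset L_v$ (Lemma~\ref{div}(4)) forces $t'_v=0$ at the remaining vanishing places is correct and is implicitly what the paper uses. But your argument stops where the real work begins. Local solvability of $x_1\cup z_1+x_2\cup z_2=t'$ at each place, which you do establish, does not by itself produce global classes $z_1,z_2$: a global element $b$ whose image $\delta b$ is prescribed at the finitely many places of $\supp(t')$ will in general contribute extra nonzero symbols $(a_i,b)_w$ at all the places $w$ where $b$ (or $a_i$) has nonzero valuation, and reciprocity alone does not let you dictate the full list of local invariants of $x_1\cup \delta b_1+x_2\cup \delta b_2$. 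Your last paragraph concedes this (``the most delicate point'') but only states an intention to ``imitate the Chebotarev argument'' rather than carrying it out; as written, the patching step is an assertion, not a proof.

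What is missing is precisely the paper's inductive descent on the support of $t$. After arranging, by harmless modifications $t\mapsto t+(a_i,b)$ and weak approximation, that every $v\in\supp(t)$ satisfies $v(a_1)=v(a_2)=v(p)=0$, and noting by Hasse's theorem that a nonzero $t$ must be supported at $n\ge 2$ places, one constructs, via the Chinese remainder theorem together with Chebotarev in a suitable ray class field, an element $b$ with $(b)=\primep_1^{k_1}\primep_2^{k_2}\primeq$, congruent to $1$ modulo a high power of $p$ and modulo the other support primes, so that $t+(a_1,b)$ loses $v_1,v_2$ from its support at the cost of at most one new place $v_0$; and this requires the case analysis on whether $a_1$ (or $a_2$, or, in the terminal $n=2$ case, the product $a_1a_2$) is a non-$p$th power at the two chosen places. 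None of this machinery appears in your proposal, and it is exactly what makes the global statement true. A smaller slip: the elements you are free to choose are $b_1,b_2$ (so that $z_i=\delta b_i$), not $a_1,a_2$, which are already fixed Kummer representatives of $x_1,x_2$.
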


\begin{proof} 
It suffices to prove that $L$ has Property D.

We fix once and for all a generator of $\mu_p$ and use it to identify $H^2(L,\mu_p^{\otimes 2})$ with $H^2(L,\mu_p) = \Br_p(L)$.
We set $x_j := \pi(y_j)$ and choose $a_j\in L^\times$ so that $\delta a_j = x_j$.
We choose $t\in H^2(L,\mu_p^{\otimes 2})$ such that $y_1\cup y_2 = i(t)$.
Observe that  $(x_1)_v=(x_2)_v=0$ implies by \eqref{p squared}
$i(t)_v = (y_1)_v \cup (y_2)_v=0$.
By Proposition~\ref{fix t}, we may assume without loss of generality that $(x_1)_v=(x_2)_v=0$ implies $t_v=0$.

We write $(a,b)$ for $\delta a \cup \delta b$.
Our goal is to represent $t$ as $(a_1,b_1)+(a_2,b_2)$ for some $b_1,b_2\in L^\times$,
so we can freely replace $t $ by any class of the form 
$t+(a_1,b)$ or $t+(a_2,b)$.  By the perfectness of the local mod $p$ Tate pairing and 
weak approximation, this means we can assume that for every valuation $ v$ 
in the support of $t$, $v(a_1)=v(a_2)=v(p)=0$.

Let $\{v_1,...,v_n\}$ denote the support of $t$.  By Hasse's theorem, we may assume 
$n\ge 2$.  We claim that we can replace $t$ by a class which 
does not include $ v_1$ or $v_2$ in its support and is supported at no more 
than one additional valuation $v_0$.  We suppose first that $a_1$ is a non-$p$th 
power in both  $L_{v_1}$ and $L_{v_2}$.  We are looking for $b$ with the 
following properties:

\begin{enumerate}

\item   $(a_1,b)_{v_1} = -t_{v_1}$
\item  $ (a_1,b)_{v_2} = -t_{v_2}$
\item $v_i(b-1)>0$ for $i\ge 3$.
\item   $b$ is congruent to 1 modulo a sufficiently high power $p^k$, so $(a_1,b)_w=0$ for all 
$w$ such that $w(p)>0$.

\end{enumerate}

To achieve these goals, we look for an element $b$ in the ring $\cO$ of 
integers of $L$ satisfying the congruence (3).  In addition, we want the 
principal ideal $(b)$ to factor as $\primep_1^{k_1} \primep_2^{k_2} \primeq$, where $\primep_i$ is 
the prime ideal of $\cO$ associated to $v_i$,   $\primeq$ 
is any prime ideal not among the $\primep_i$, and $k_1,k_2 $ are positive integers 
determined mod $p$ by conditions (1) and (2) respectively.  Note that 
because $v(a_1)=v(a_2)=0$, the symbols $(a_1,b)_{v_i}$ for $i=1,2$ depend only 
on  $v(b)$ and of course the image of $a_1$ in the residue fields of $\primep_1$ and 
$\primep_2$ respectively.  By conditions (3) and (4), $(a_1,b)_{v_i} = 0$ for $i\ge 3$
and $(a_1,b)_w=0$ for every $w$ such that $w(p)>0$.  Thus $(a_1,b)_v\neq 0$ implies
$v = v_1$, $v=v_2$, or $v=v_0$ is associated to $\primeq$.  Thus, $t+(a_1,b)$
is supported only at $v_3,\ldots,v_n$ and possibly $v_0$.

To show that $b$ exists, we use the Chinese remainder theorem to find
an element
$$b_0\in (\primep_1^{k_1}\setminus \primep_1^{k_1+1}) \cap (\primep_2^{k_2}\setminus \primep_2^{k_2+1})\cap (1+(p^k))\cap \bigcap_{i=3}^n (1+\primep_i) .$$
Let
$$\ideala = p^k\primep_1^{k_1+1}\primep_2^{k_2+1}\primep_3\cdots\primep_n.$$
We claim that there exists $b\in b_0+\ideala$ such that 
$(b)=\primep_1^{k_1}\primep_2^{k_2}\primeq$, with $\primeq$ prime.
Indeed, the condition on a prime ideal $\primeq$ which is prime to $\ideala$ that 
$\primep_1^{k_1}\primep_2^{k_2}\primeq$ is a principal ideal with a generator $b$ satisfying
conditions (3) and (4) is a non-empty condition in the ray class group of $\ideala$, so
by the Chebotarev density theorem applied to the ray class field of $\ideala$, there are infinitely
many possible prime ideals $\primeq$.

If $b$ exists, we replace $t$ with $t+(a_1,b) $ and reduce the size 
of its support, so we win by induction on $n$.
This argument assumes that $a_1$ fails to be a $p$th power in $L_{v_i}$ for 
two distinct values $ i$.  By the same reasoning, we succeed if $a_2$ fails to be a $p$th power in $L_{v_i}$
for two distinct values of $i$. Since  $a_1$ 
and $a_2$ cannot both be $p$th powers in $ L_{v_i}$ for any i, the only 
remaining possibility is that $n=2$ and (possibly renumbering) $a_i$ is a 
pth power in $L_{v_i}$ but not in $L_{v_{3-i}}$ for $ i=1,2$.  In this case, 
$a_1 a_2$ fails to be a $p$th power in both $L_{v_1} $ and $L_{v_2}$, and we look 
for $b$ as above but with $a_1$ replaced by $a_1 a_2$ in conditions (1)--(3).

We conclude that $t$ can be written $x_1 \cup \delta b_1 + x_2 \cup \delta b_2$.
Applying $i$ to both sides, the theorem follows.

 \end{proof}

  \begin{rem}
  The proof simplifies if we assume that $\mu_{p^2} \subset L$, in particular in that case we do not need Lemma \ref{fix t}, and as in the local case of Proposition \ref{solve local} we could then work with mod $p$ cohomology to prove Theorem \ref{key}, rather than with the images of the maps $i$ and $\pi$ that relate mod $p$ and mod $p^2$ cohomology. 
  \end{rem}

\section{ Three dimensional Heisenberg  representations }\label{Heisenberg}

 \begin{define}\label{def:Heisenberg}
 For $R$ any $\Z_p$-algebra we define a  ($R$-valued) Heisenberg representation to be a continuous homomorphism $\rho: G_L \to \GL_3(R)$ which takes values in the unitriangular matrices: for $\sigma \in G_L$, $\rho(\sigma)$ is thus
 of the form  \[ \left( \begin{matrix} 1 & \ast & \ast \\ 0 & 1 & \ast \\ 0 & 0 & 1 \end{matrix} \right).\]     %where $\chi:G_L \to \Z_p^\times$ is the $p$-adic cyclotomic character.
 \end{define}

  We  now assume that $p>2$ for  and the fields $L$ we consider in this section have $\mu_{p^2} \subset L$. Thus $\mu_{p^2}^{\otimes n}=\Z/p^2\Z$ as a $G_L$-module for all $n \in \Z$.

 We consider the case when $\rhobar:G_L \to \GL_3(\F_p)$ is a  Heisenberg representation.  The splitting field $K$ of $\rhobar$ over $L$ has  a subextension $K'/L$ with Galois group  a quotient of $\Z/p\Z \times \Z/p\Z$, and thus $K'= L( x_i^{1/p} ) $ for $i=1,2$ with $x_i \in L^\times/(L^\times)^p$.   We can also think of $x_i$ as elements in
 $H^1(L,\mu_p)$.
 
 \begin{lem}\label{red} $ $
 
 \begin{enumerate}
 
 \item  Under the cup product map $H^1(L,\mu_p) \times H^1(L,\mu_p) \to H^2(L,\mu_p^{\otimes 2}),$  $x_1 \cup x_2=0.$
 
 \item  The conjugacy classes of lifts of the homomorphism $G_L \to \Z/p\times \Z/p$,  induced by  the representation $\rhobar$  to $H$,  to mod $p$   Heisenberg representations  with values in $H$ are in the bijection with 
 $H^1(L,\mu_p^{\otimes 2})$.
 \end{enumerate}
 
 \end{lem}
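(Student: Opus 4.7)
The plan is to realize the unitriangular Heisenberg group $H\subset \GL_3(\F_p)$ as a central extension
$$0 \longrightarrow \mu_p^{\otimes 2} \longrightarrow H \longrightarrow \mu_p \oplus \mu_p \longrightarrow 0$$
and to identify its extension class in $H^2(\mu_p\oplus\mu_p,\mu_p^{\otimes 2})$ with the external cup product of the two coordinate projections. Both parts of the lemma then follow from the obstruction theory for lifting along central extensions.

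The center $Z(H)$ is the line spanned by $E_{13}=[E_{12},E_{23}]$, and the commutator on $H$ descends to a perfect bilinear pairing $(H/Z(H))\times (H/Z(H))\to Z(H)$. Identifying each factor of $H/Z(H)\cong \F_p\oplus\F_p$ with $\mu_p$ via the fixed primitive $p$th root of unity in $L$, the commutator pairing becomes a canonical isomorphism $\mu_p\otimes \mu_p\stackrel{\sim}{\to} Z(H)$. Using the set-theoretic section $(a,b)\mapsto E_{12}(a)E_{23}(b)$ of $H\to H/Z(H)$, a direct matrix calculation yields the normalized $2$-cocycle $((a,b),(a',b'))\mapsto -a'b$, whose class in $H^2(\mu_p\oplus\mu_p,\mu_p^{\otimes 2})$ coincides, by graded commutativity, with $\mathrm{pr}_1\cup \mathrm{pr}_2$.

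For~(1): by Kummer theory the composite $G_L\to H\to \mu_p\oplus\mu_p$ is the pair $(x_1,x_2)$, so pullback of the extension class to $G_L$ equals $x_1\cup x_2\in H^2(L,\mu_p^{\otimes 2})$, and the existence of the lift $\rhobar\colon G_L\to H$ forces this obstruction to vanish. For~(2): any two lifts $\rho_1,\rho_2\colon G_L\to H$ of the fixed map $(x_1,x_2)$ differ by a function $c\colon G_L\to Z(H)=\mu_p^{\otimes 2}$ with $\rho_2 = c\cdot \rho_1$, and because $Z(H)$ is central in $H$ and $G_L$ acts trivially on $\mu_p^{\otimes 2}$ (as $\mu_{p^2}\subset L$), the cocycle condition on $c$ collapses to $c$ being a continuous group homomorphism. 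Conjugation by $Z(H)$ is trivial, so the torsor structure descends to conjugacy classes of lifts, giving a (non-canonical) bijection with $\mathrm{Hom}_{\mathrm{cts}}(G_L,\mu_p^{\otimes 2})=H^1(L,\mu_p^{\otimes 2})$. No step should present a genuine difficulty; the only subtle point is that the center of $H$ carries the Tate twist $\mu_p^{\otimes 2}$, not simply $\mu_p$, which is what makes the cup product formulation of~(1) land in the stated group.
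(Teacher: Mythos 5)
Your proof is correct and takes essentially the same route as the paper: the paper likewise views $H$ as the central extension $0 \to \Z/p\Z \to H \to \Z/p\Z\times\Z/p\Z \to 0$, notes that its class lies in the (one-dimensional, nonzero) image of the cup product $H^1(\Z/p\Z,\Z/p\Z)\times H^1(\Z/p\Z,\Z/p\Z)\to H^2(\Z/p\Z\times\Z/p\Z,\Z/p\Z)$, pulls back along $G_L\to \Z/p\Z\times\Z/p\Z$ to get part (1), and dismisses part (2) as standard --- which is exactly the central-extension torsor argument you spell out. Your explicit cocycle computation $((a,b),(a',b'))\mapsto -a'b$ merely makes the paper's identification of the extension class with $\mathrm{pr}_1\cup\mathrm{pr}_2$ precise.
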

 
\begin{proof}

 To see part (1),   observe that the Heisenberg group $H$  of order $p^3$ sits in an exact sequence \[ 0 \to \Z/p\Z \to H \to \Z/p\Z \times \Z/p\Z \to 0.\]
 The class in $H^2(\Z/p\Z \times \Z/p\Z ,\Z/p\Z)$ of this extension is in the one dimensional image of  
 $$H^1(\Z/p\Z,\Z/p\Z) \times H^1(\Z/p\Z,\Z/p\Z) \to H^2(\Z/p\Z \times \Z/p\Z,\Z/p\Z).$$
 Thus the fact that the extension  $K'= L( x_i^{1/p})  $ of $L$ with Galois group a quotient of  $\Z/p\Z \times \Z/p\Z$  embeds in  {\it some} $K$ with $\gal(K/L)$ a subgroup of $H$  is equivalent to   $x_1 \cup x_2=0$,   under the cup product map $H^1(L,\mu_p) \times H^1(L,\mu_p) \to H^2(L,\mu_p^{\otimes 2}),$

 Part (2) is  standard.
 \end{proof}

\begin{lem}\label{red1} $ $
The existence of a   mod $p^2$  Heisenberg representation  that reduces mod $p$ to  $\rhobar$     is equivalent to the existence of elements $\tilde x_i \in H^1(L,\mu_{p^2})$ 
lifting $x_i$ such that under the cup product map 
$H^1(L,\mu_{p^2}) \times H^1(L,\mu_{p^2}) \to H^2(L,\mu_{p^2}^{\otimes 2})$, \[\tilde x_1 \cup  \tilde x_2=0.\]

\end{lem}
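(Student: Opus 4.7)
The plan is to transpose Lemma~\ref{red} to the mod $p^2$ level, using the standing hypothesis $\mu_{p^2}\subset L$ to identify $\mu_{p^2}^{\otimes n}$ with $\Z/p^2\Z$ as $G_L$-modules for all $n$. Let $\tilde H \subset \GL_3(\Z/p^2\Z)$ denote the group of unitriangular matrices. It fits into a central extension
\[
0 \to \Z/p^2\Z \to \tilde H \to (\Z/p^2\Z)^2 \to 0,
\]
where the kernel is the $(1,3)$-entry subgroup. A direct commutator calculation $[e_{12}(a),e_{23}(b)] = e_{13}(ab)$, identical to the one used implicitly at the mod $p$ level in Lemma~\ref{red}(1), shows that the class of this extension in $H^2((\Z/p^2\Z)^2,\Z/p^2\Z)$ is the cup product of the two coordinate projections.

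For the forward direction, given a Heisenberg lift $\rho\colon G_L\to \tilde H$ of $\rhobar$, composing with the quotient $\tilde H\to (\Z/p^2\Z)^2$ and applying Kummer theory yields classes $\tilde x_1,\tilde x_2 \in H^1(L,\mu_{p^2})$. Because $\rho\bmod p=\rhobar$, these reduce modulo $p$ to $x_1,x_2$. The fact that $\rho$ exists as a lift along the quotient map is equivalent to the vanishing of the pullback of the above extension class, which by the cup-product description is precisely $\tilde x_1\cup\tilde x_2 \in H^2(L,\mu_{p^2}^{\otimes 2})$.

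For the converse, a pair of lifts $\tilde x_1,\tilde x_2$ with $\tilde x_1\cup\tilde x_2=0$ defines a homomorphism $G_L\to(\Z/p^2\Z)^2$ admitting some Heisenberg lift $\rho\colon G_L\to\tilde H$. The reduction $\rho\bmod p$ is then a mod $p$ Heisenberg representation whose abelian quotient equals that of $\rhobar$, so by Lemma~\ref{red}(2) the two differ by a central translation by some $c\in H^1(L,\mu_p^{\otimes 2})$. The main point is now to remove this discrepancy: by Lemma~\ref{kummer surjection} (applied under $\mu_{p^2}^{\otimes 2}\cong \mu_{p^2}$) the class $c$ lifts to some $\tilde c\in H^1(L,\mu_{p^2}^{\otimes 2})$, and twisting the central coordinate of $\rho$ by $-\tilde c$ produces a Heisenberg lift with unchanged $(\tilde x_1,\tilde x_2)$ whose mod $p$ reduction is exactly $\rhobar$. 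This last adjustment step, although formal, is the only delicate point; the main cohomological inputs---the cup-product formula for the Heisenberg extension class and the Kummer surjectivity---are both already at hand.
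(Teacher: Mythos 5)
Your proof is correct and follows essentially the same route as the paper's: identify the obstruction to lifting $G_L\to(\Z/p^2\Z)^2$ to the mod $p^2$ Heisenberg group with $\tilde x_1\cup\tilde x_2$, note that Heisenberg lifts of a fixed abelian quotient differ by classes in $H^1(L,\mu_{p^2}^{\otimes 2})$, and use Kummer surjectivity to twist away the central discrepancy so the reduction is exactly $\rhobar$. You merely spell out the extension-class/cup-product identification and the final adjustment in more detail than the paper does.
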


\begin{proof}
 The reduction mod $p$ map $H^1(G_L,\mu_{p^2}) \to H^1(G_L,\mu_p)$ is surjective by Kummer theory. Thus the $x_i$'s indeed lift to mod $p^2$ Kummer classes.
If the lifts can be chosen to have cup product zero, then the homomorphism
 $G_L \to \Z/p^2\Z \times \Z/p^2\Z $  arising from $\gal(L(\tilde x_i^{1/p^2})/L)$ lifts to a  mod $p^2$ Heisenberg  representation $\rho'$.  The converse is also true. Further the lifts  of  $G_L \to ( \Z/p^2\Z \times \Z/p^2\Z) $ to   mod $p^2$ Heisenberg representations differ by elements of $H^1(L,\mu_{p^2}^{\otimes 2})=H^1(L,\mu_{p^2})$.

 The reduction mod $p$ of $\rho'$ will differ from $\rhobar$  by a class in  $x \in H^1(L,\mu_p)$.
  Using  the surjectivity of  the map  $H^1(G_L,\mu_{p^2}) \to H^1(G_L,\mu_p)$, we see that we can modify $\rho'$ by an element  $\tilde x \in H^1(L,\mu_{p^2})$ which reduces to $x$, to get a (conjugacy class of)   Heisenberg mod $p^2$ representation $\rho$   that reduces to $\rhobar$.
\end{proof}

\begin{thm}\label{final}
Let $L$ be a non-archimedean field of characteristic 0 or a number field and assume $\mu_{p^2} \subset L$. Then:

  \begin{enumerate}
  
  \item A mod $p$ Heisenberg representation $\rhobar:G_L \to \GL_3(\F_p)$ lifts to a  mod $p^2$  Heisenberg  representation.

  \item Let $\rhobar:G_L \to \GL_3(\F_p)$ be a homomorphism  with $p>2$. Then $\rhobar$ lifts to $\rho:G_L \to \GL_3(\Z/p^2\Z)$.

  \end{enumerate}

\end{thm}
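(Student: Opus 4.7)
The plan for part~(1) is to chain together the two reduction lemmas of this section with the cup product lifting results already established.  Given a Heisenberg $\rhobar\colon G_L\to \GL_3(\F_p)$, Lemma~\ref{red}\,(1) extracts two Kummer classes $x_1,x_2\in H^1(L,\mu_p)$ satisfying $x_1\cup x_2=0$.  Proposition~\ref{solve local} (if $L$ is a local non-archimedean field) or Theorem~\ref{key} (if $L$ is a number field) then supplies lifts $\tilde x_1,\tilde x_2\in H^1(L,\mu_{p^2})$ with $\tilde x_1\cup \tilde x_2=0$, and Lemma~\ref{red1} repackages this data as a mod $p^2$ Heisenberg lift of $\rhobar$.

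For part~(2), the strategy is to reduce to part~(1) via a Sylow-theoretic restriction-corestriction argument.  The obstruction to lifting $\rhobar$ to $\GL_3(\Z/p^2\Z)$ is a class $\alpha\in H^2(G_L,\Ad)$, where $\Ad=M_3(\F_p)$ carries the conjugation action through $\rhobar$.  Let $G=\rhobar(G_L)$, fix a Sylow $p$-subgroup $P\subset G$, and let $L'/L$ be the finite extension cut out by $\rhobar^{-1}(P)\subset G_L$, so that $n:=[L':L]=[G:P]$ is prime to $p$ and $\mu_{p^2}\subset L'$.  Because $\Ad$ is $p$-torsion, the identity $\operatorname{cor}\circ\operatorname{res}=n\cdot \id$ on $H^2(G_L,\Ad)$ forces restriction to $G_{L'}$ to be injective.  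It therefore suffices to see that the restriction $\rhobar|_{G_{L'}}$ lifts.  Its image is the $p$-group $P$, which, being a $p$-subgroup of $\GL_3(\F_p)$, is conjugate into the Sylow $p$-subgroup consisting of the unitriangular matrices.  After conjugating, $\rhobar|_{G_{L'}}$ is Heisenberg, and part~(1) applied to the local or number field $L'$ (still containing $\mu_{p^2}$) provides the lift.

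The main technical obstacle has already been overcome in the previous section: the global cup product lifting theorem (Theorem~\ref{key}) and its local counterpart (Proposition~\ref{solve local}) are the essential analytic input.  What remains here is largely formal, with the one delicate point being the choice of intermediate field $L'$: it must be arranged so that the Sylow reduction simultaneously keeps the hypothesis $\mu_{p^2}\subset L$ in force and keeps the base field within the local or global class to which part~(1) applies, which is automatic given our setup.
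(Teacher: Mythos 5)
Your proposal is correct and follows essentially the same route as the paper: part (1) by combining Lemma~\ref{red}, Proposition~\ref{solve local} / Theorem~\ref{key}, and Lemma~\ref{red1}, and part (2) by restricting the obstruction class in $H^2(G_L,M_3(\F_p))$ to the fixed field of the preimage of a Sylow $p$-subgroup of the image, using injectivity of restriction in prime-to-$p$ degree and the fact that the $p$-group image is conjugate into the unitriangular matrices so that part (1) applies. The paper's proof is the same argument, stated slightly more tersely (it leaves the corestriction--restriction injectivity implicit).
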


\begin{proof}
Part (1) follows from combining Proposition \ref{solve local}, Theorem \ref{key} and Lemma \ref{red1}.  For (2) note that the obstruction $\alpha$  to lifting $\rhobar$ to a $\GL_3(\Z/p^2\Z)$ representation  lies in $H^2(G_L, M_3(\F_p))$.  We want to prove that $\alpha=0$. Let $K$ be the extension of $L$ fixed by inverse image under $\rhobar$ of the Sylow $p$-subgroup of its image. Then  if $\alpha\neq 0$,  its image $\alpha'$ under the restriction map  $H^2(G_L, M_3(\F_p)) \to H^2(G_{K(\mu_p)},M_3(\F_p))$ is non-zero. By part (1) we know that $\rhobar|_{G_{K(\mu_p)}}$ lifts to a $\Z/p^2\Z$  Heisenberg representation, and thus  $\alpha'=0$.
\end{proof}

%   \begin{rem}
%  As G. B\"ockle remarked to us,  when $\mu_p \subset L_v$ but  $\mu_{p^2}$ is not contained in $L_v$, Tate local duality gives that 
%$H^2(L_v,\mu_{p}^{\otimes 2}) \to  H^2(L_v,\mu_{p^2}^{\otimes 2}) $ 
%is not injective as  the dual map
%$H^0(L_v,\mu_{p^2}^{\otimes -1})  \to  H^0(L_v,\mu_{p}^{\otimes -1})$
%is not surjective.  Further in this case  case  ($\mu_p \subset L_v$ but  $\mu_{p^2}$ is not contained in $L_v$)   the map $H^2(L_v,\mu_{p}^{\otimes 2}) \to  H^2(L_v,\mu_{p^2}^{\otimes 2}) $ is an isomorphism 
%and both are isomorphic to $\Z/p\Z$. 
% \end{rem}

% \begin{lem}
% \label{modify t}
%Let $L$ be a global field and $p>2$ a prime such that $\mu_p\subset L$ but $\mu_{p^2}\not\subset L$.
%Let  $x \in H^1(L,\mu_p)$ be non-zero.  Then there exists $w$  such that $\mu_{p^2}\not\subset L_w$
%and $x_w\neq 0$.
%\end{lem}
% 
%\begin{proof}
%Let $x\in H^1(L,\mu_p)$ correspond by Kummer theory to $a\in L^\times$, meaning that $\delta a = x$,
%where $[a]$ is the image of $a$ in $L^\times\otimes\F_p$,
%and  $\delta $ is the boundary of the $\F_p$-Kummer sequence
%There is a non-empty subset $S\subset \gal(L(\mu_{p^2},a^{1/p})/L)$
%such that $\Frob_w \in S$ implies that $a$ is not a $p$th power in $L_w$ (so $x_w\neq 0$) and $\mu_{p^2}\not\subset L_w$.
%\end{proof}

 \begin{rem}
  We have left open the question of lifting $\rhobar:G_L  \to \GL_3(k)$ to a representation to $\GL_3(W_2(k))$,  for $L$ a local or global field and  $k$ is a field of characteristic $p$, outside the case  when $k$ is the prime field and $\mu_{p^2} \subset L$. We hope to return to the general case in a future work.
 \end{rem}
 
\appendix

\section{Geometric lifts of mod $p$ representations  over number fields}\label{app}

The lifting  methods specific to number fields $F$ produce lifts of  {\it odd}  representations $\br:G_F \rightarrow G(k)$ with $G$ a reductive group over a finite field $k$ to characteristic 0 {\it geometric} representations $\rho: G_F \to G(\cO)$ with $\cO$ the ring of integers of a finite extension of $\Q_p$  (see \cite[ Definition 1.2]{fkp:reldef} for the definition of odd $\rhobar$ which in particular implies that $F$  is a CM field). There are two distinct methods,  as recalled in the introduction,  to produce such geometric lifts (cf.  \cite{ramakrishna02}  and \cite{khare-wintenberger:serre0}) which were first developed there  in the particular case  of odd irreducible representations $\rhobar:G_\Q \to \GL_2(k)$.   The papers  \cite{ramakrishna-hamblen},  \cite{stp:exceptional}, \cite{fkp:reldef} and \cite{fkp:reducible}   generalize  R.~Ramakrishna's method in \cite{ramakrishna02}, and \cite{blggt:potaut} generalizes the   method of  \cite{khare-wintenberger:serre0}. The innovation of \cite{klr} plays a key role in  \cite{ramakrishna-hamblen}, \cite{fkp:reldef} and \cite{fkp:reducible} to handle cases when the representation has small image (for eg.  when $G=\GL_2$, the ``doubling method'' of \cite{klr} plays a key role  in \cite{ramakrishna-hamblen}  to produce  {\it irreducible} geometric lifts  of  odd, {\it reducible} representations $\rhobar:G_\Q \to \GL_2(k)$).

 These methods lift $\br$ to  characteristic 0 representations with control on the local ramification behavior of the lifts (at all primes in the method of  \cite{khare-wintenberger:serre0}, while in  the Ramakrishna method  one has to generally allow    extra ramification at finite sets of primes chosen to kill certain dual Selmer groups). Thus this goes   beyond the mod $p^2$ lifting produced by Kummer theory in \cite{K-JNT}.  

In spite of the differences in the methods of  \cite{ramakrishna02} and \cite{khare-wintenberger:serre0}, they both rely on   certain numerics  which obtain for odd Galois representations ( ``dimension of Selmer'' $h^1_f(S,\Ad(\rhobar)) $ $\geq$ $h^1_f(S,\Ad(\rhobar)^*)$ ``dimension of dual Selmer'') that  guarantee an appropriate deformation ring has positive dimension. When these numerics fail one's bearings are lost  and the question  if there exist geometric lifts of residual representations
$\rhobar:G_K \to  G(k)$ is as yet mysterious.  To hazard a guess for the  class of $\rhobar$ which have  geometric lifts we  consider  the class of  representations  $\rhobar$ which are ``regular at infinity'' which is broader than  the class of odd representations of \cite[ Definition 1.2]{fkp:reldef}. This notion has been considered previously as the property of being $\GL$-odd  in \cite{calegari:evenNotes}.

\begin{defn}
 We say that $\rhobar:G_K \to \GL_n(k)$ is {\it  regular at infinity}  if for each real place of $v$, for the conjugacy class $c_v$  in $G_K$ of complex conjugation at $v$,   the characteristic polynomial of  $\rhobar(c_v)$  which is of the form $(X-1)^{n_{v,+}}(X+1)^{n_{v,-}}$,  has the property that  $|n_{v,+}-n_{v,-}| \leq 1$.  (For the case of $k$ of characteristic 2, we interprete this to mean that  there is no condition.) 
\end{defn}

\begin{conj}\label{mystery}
Let $K$ be a number field and $k$ a finite field of characteristic $p$. A continuous  representation $\rhobar: G_K \to \GL_n(k)$  that is regular at infinity lifts to an irreducible  geometric representation $\rho: G_K \to  \GL_n(\cO)$ with $\cO$ the ring of integers of a finite extension of $\Q_p$.

\end{conj}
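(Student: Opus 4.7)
The conjecture is open, and the authors explicitly remark that once the standard Selmer versus dual Selmer inequality fails ``one's bearings are lost.'' The plan I would pursue is nevertheless the natural one: extend the Ramakrishna-style deformation-theoretic approach of \cite{ramakrishna02}, as refined in \cite{klr}, \cite{stp:exceptional}, \cite{fkp:reldef}, and \cite{fkp:reducible}, to representations only assumed regular at infinity. Fix regular Hodge-Tate weights at each $v\mid p$ and consider the global deformation problem with crystalline conditions at $p$-adic places (via Fontaine-Laffaille or Kisin's potentially crystalline theory), minimally ramified conditions at the primes of $S_0=\{v:\rhobar\text{ is ramified at }v\}\cup\{v\mid p\}$, and unramified conditions outside $S_0$. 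The aim is to enlarge $S_0$ by Chebotarev-chosen auxiliary primes to a finite set $T$ so that the universal $T$-ramified deformation ring is a finite flat $W(k)$-algebra, hence admits a geometric $\cO$-point yielding the desired lift.

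The numerical heart of the argument is the Greenberg-Wiles formula
\[ h^1_{\mathcal{L}}(K,\Ad(\rhobar))-h^1_{\mathcal{L}^\perp}(K,\Ad(\rhobar)^*(1)) = h^0(K,\Ad(\rhobar))-h^0(K,\Ad(\rhobar)^*(1))+\sum_v\bigl(\dim\mathcal{L}_v - h^0(K_v,\Ad(\rhobar))\bigr). \]
At a real place $v$ the local term is $-h^0(G_v,\Ad(\rhobar))=-(n_{v,+}^2+n_{v,-}^2)$, which under $|n_{v,+}-n_{v,-}|\le 1$ attains the minimal value $\lceil n^2/2\rceil$, matching the balanced contribution produced at $v\mid p$ by a regular crystalline condition; this is what makes the right-hand side non-negative. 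The hypothesis regular at infinity therefore gives exactly (and only just) the numerics required. Assuming this, one chooses ``nice'' auxiliary primes $q\notin S_0$ at which $\Ad(\rhobar)(\Frob_q)$ has the eigenvalue structure providing a one-dimensional local deformation space that annihilates a chosen dual Selmer class; a Chebotarev-density argument in the splitting field of $\Ad(\rhobar)\oplus\Ad(\rhobar)^*(1)$ produces such primes when the image of $\rhobar$ is sufficiently large, while the doubling method of \cite{klr} and the reducible-case techniques of \cite{fkp:reducible} handle small-image $\rhobar$.

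The main obstacle, and the reason the conjecture remains open, is that the numerics above are balanced with no slack, and outside the polarized (CM-type) setting treated in \cite{fkp:reldef} and \cite{blggt:potaut} there is no mechanism guaranteeing that the auxiliary local deformation spaces are actually rich enough to annihilate every dual Selmer class while preserving the crystalline and archimedean conditions. In the polarized setting the self-duality of $\Ad(\rhobar)$ under the CM-involution simultaneously improves local lifting at $p$ and at the auxiliary primes; for general regular-at-infinity $\rhobar$ one must work with $\Ad(\rhobar)$ itself, and no substitute for this coupling is currently available. Overcoming this will, I expect, require a genuinely new ingredient---either from $p$-adic Hodge theory of non-self-dual representations, or a yet-undiscovered automorphic input for general $\GL_n$ that replaces the Galois-theoretic arguments of \cite{khare-wintenberger:serre0}---before the conjecture can be established unconditionally.
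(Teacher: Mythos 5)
This statement is Conjecture~\ref{mystery}; the paper offers no proof of it and explicitly describes it as open and, beyond $n=1$ and $n=2$ over totally real fields, ``inaccessible at the moment.'' You correctly recognize this and do not actually claim a proof, so what you have written is a strategy sketch rather than an argument that can be checked against anything in the paper; as a proof of the statement it is, by your own admission, incomplete.

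One concrete point in your sketch is wrong, and it is exactly the point the paper is at pains to make. You assert that the regular-at-infinity hypothesis makes the Greenberg--Wiles right-hand side non-negative, i.e.\ that the balanced archimedean term $-\sum_{v\mid\infty}h^0(G_v,\Ad(\rhobar))$ ``matches'' the contribution $[K:\Q]\,n(n-1)/2$ of a regular crystalline condition at the places above $p$, so that the numerics required by Ramakrishna's method hold ``exactly and only just.'' This is false outside the odd/polarized setting: for general $\GL_n$ there is a strictly positive defect, and one has $h^1_f(S,\Ad(\rhobar)) < h^1_f(S,\Ad(\rhobar)^*)$ -- the ``numerical disadvantage'' the paper attributes to non-odd representations and which \cite{klr} can only circumvent by allowing ramification at infinitely many primes or dropping the de Rham condition. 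The simplest illustration is the one the paper itself highlights: over an imaginary quadratic field every $\rhobar\colon G_K\to\GL_2(k)$ is regular at infinity (there are no real places), yet the Selmer/dual Selmer inequality fails and no geometric lifting result is known. Regularity at infinity is a plausibly \emph{necessary} condition for geometric lifts (via \cite{calegari:even2} and \cite{bao-caraiani}), not a condition that restores the balance of the numerics; conflating the two removes the very obstruction that makes the conjecture a conjecture. So the honest summary of your proposal is that its first two paragraphs describe machinery that is known not to apply as stated, and its final paragraph (a new ingredient is needed) is the actual state of the art, in agreement with the paper.
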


 The condition that  $\rhobar$   is  regular at infinity  is  a reasonable necessary  condition to impose for it to have geometric lifts  in the light of \cite[Theorem 5.1]{calegari:even2}  and \cite[Theorem 1.1]{bao-caraiani}. We could  be bolder and  make the stronger conjecture that  $\rhobar:G_K \to \GL_n(k)$ has a  geometric lift that is  regular at  all places above $p$ (i.e., distinct Hodge-Tate weights for all places above $p$) if and only $\rhobar$ is regular at infinity.  (Note that  we have not assumed that  $\rhobar$ is irreducible as we believe that  not to be necessary,
 cf. \cite{ramakrishna-hamblen} and \cite{fkp:reducible}.)

The question is accessible for  $n=1$ (trivially), and for $n=2$ when $K$ is a totally real field (results of \cite{ramakrishna02}, \cite{khare-wintenberger:serre0},   \cite{ramakrishna-hamblen} and \cite{fkp:reducible}). Beyond these cases the conjecture  for representations $\rhobar$ that are regular at infinity,  but not odd in the sense of \cite[ Definition 1.2]{fkp:reldef},  seems to us inaccessible at the moment.

One cannot expect there  to be ``minimal lifts''  (with the best possible ramification behavior roughly speaking, see \cite[\S 3]{khare-wintenberger:serre0})  in this generality, unlike the case of odd Galois representations  in which case such minimal lifts are produced by the method of \cite{khare-wintenberger:serre0}. This seems to make the question harder, as geometric lifts of  $\rhobar$ that are regular at infinity but not odd,   even if they were to exist, would have a sporadic character, making searching for a geometric lift of  such $\rhobar$ like   looking for a needle in a haystack.

 Arguably the simplest case in which the conjecture is open is for representations  $\rhobar:G_K \to \GL_2(k)$ when $K$ is an imaginary quadratic field, like $K=\Q(i)$: note that these representations are regular at infinity (as is   any  representation $\rhobar:G_K \to \GL_n(k)$  if $K$ has no real place).  We also note that  while the existence of geometric lifts in these cases seems presently out of reach, lifting $\rhobar$ to a representation $\rho: G_K \to  \GL_2(\cO)$ which is not necessarily geometric is possible by the ``doubling  method'' of \cite{klr}. The  method  of \cite{klr} overcomes the numerical disadvantage,   $h^1_f(S,\Ad(\rhobar)) $ $<$ $h^1_f(S,\Ad(\rhobar)^*)$,  one is at if one restricts to producing geometric lifts that are unramified outside a  finite set  of primes $S$,  by means of allowing either ramification at infinitely many  primes, or dropping the condition on the lifts of being de Rham  at primes above $p$.

\bibliographystyle{amsalpha}
\bibliography{biblio.bib}

\end{document}